\numberwithin{equation}{section}
\numberwithin{figure}{section}
\titleformat{\subsection}[runin]
{\bfseries} {\thesubsection{.}}{0.15cm}{}[.]
\titleformat{\subsubsection}[runin]
{\em}{\thesubsubsection{.}}{0.15cm}{}[.]
\newtheorem{theorem}{Theorem}[section]
\newtheorem{proposition}[theorem]{Proposition}
\newtheorem{lemma}[theorem]{Lemma}
\newtheorem{corollary}[theorem]{Corollary}
\theoremstyle{definition}
\newtheorem{definition}[theorem]{Definition}
\newtheorem{remark}[theorem]{Remark}
\newtheorem{problem}[theorem]{Problem}
\newtheorem{example}[theorem]{Example}
\newcommand\Kcal{\mathcal{K}}
\newcommand\Pcal{\mathcal{P}}
\newcommand\bx{\mathbf{x}}
\newcommand\Ascr{\mathscr{A}}
\newcommand\Cscr{\mathscr{C}}
\newcommand\Oscr{\mathscr{O}}
\newcommand\C{\mathbb{C}}
\newcommand\D{\overline{\mathbb D}}
\newcommand\CP{\mathbb{CP}}
\renewcommand\D{\mathbb D}
\renewcommand\H{\mathbb{H}}
\newcommand\N{\mathbb{N}}
\newcommand\R{\mathbb{R}}
\newcommand\U{\mathbb{U}}
\newcommand\Z{\mathbb{Z}}
\newcommand\ggot{\mathfrak{g}}
\newcommand\igot{\mathfrak{i}}
\renewcommand\igot{\mathfrak{i}}
\newcommand\Igot{\mathfrak{I}}
\newcommand\E{\mathrm{e}}
\renewcommand\imath{\igot}
\newcommand\hra{\hookrightarrow}
\newcommand\wt{\widetilde}
\newcommand\di{\partial}
\newcommand\dibar{\overline\partial}
\newcommand\nullq{{\mathbf A}}
\newcommand\GCMI{\mathrm{GCMI}}
\newcommand\Aut{\mathrm{Aut}}
\def\Ell1{\mathrm{Ell_1}}
\def\CEll1{\mathrm{CEll_1}}
\begin{document}

\fancyhead[LO]{Minimal surfaces with symmetries}
\fancyhead[RE]{F.\ Forstneri\v c}
\fancyhead[RO,LE]{\thepage}

\thispagestyle{empty}

%% Title
%\vspace*{5mm}

\begin{center}
{\bf \LARGE Minimal surfaces with symmetries}

\vspace*{0.5cm}

%% Authors
{\large\bf  Franc Forstneri\v c}  
\end{center}

\vspace*{0.5cm}

{\small
\noindent {\bf Abstract} 
Let $G$ be a finite group acting on a connected open Riemann surface $X$ by 
holomorphic automorphisms and acting on a Euclidean space $\R^n$ $(n\ge 3)$ 
by orthogonal transformations. We identify a necessary and sufficient condition for the 
existence of a $G$-equivariant conformal minimal immersion $F:X\to\R^n$.
We show in particular that such a map $F$ always exists if $G$ acts without 
fixed points on $X$. Furthermore, every finite group $G$ arises in this way for 
some open Riemann surface and $n=2|G|$. We obtain an analogous result
for minimal surfaces having complete ends with finite total Gaussian curvature,
and for discrete groups acting on $X$ properly discontinuously 
and acting on $\mathbb R^n$ by rigid transformations. 
}

\noindent{\bf Keywords:} \hspace*{0.1cm} 
minimal surface, equivariant conformal minimal immersion, Oka manifold

\vspace*{0.1cm}

\noindent{\bf MSC (2020):}\hspace*{0.1cm}  Primary 53A10; Secondary 32E30, 32H02, 32Q56
%
%  32B15: (1973-now) Analytic subsets of affine space
%  32H05: Holomorphic mappings, embeddings,...
%
%  32E10: Stein manifolds
%  32E30: (1973-now) Holomorphic and polynomial approximation, Runge pairs, interpolation
%  32H02: (1991-now) Holomorphic mappings, (holomorphic) embeddings and related questions
%  32M17: Automorphism groups of Cn and affine manifolds 
%  32Q56: Oka principle and Oka manifolds 
%
%  52A20 (1973-now) Convex sets in n dimensions (including convex hypersurfaces) [See also 53A07, 53C45] 
%  52A27 (1991-now) Approximation by convex sets 
%
%  53A10 Minimal surfaces, surfaces with prescribed mean curvature
%

\noindent {\bf Date: \rm 16 February 2024}

%%%%%%%%%%
%%%%%%%%%%
%%%%%%%%%%
%%%%%%%%%%
%%%%%%%%%%
%%%%%%%%%%

%
%   INTRODUCTION 
%

\section{Introduction}\label{sec:intro}  
Objects with symmetries are of special interest in any mathematical theory.
In this paper, we study the existence of immersed orientable  
minimal surfaces in Euclidean spaces $\R^n$ with a given finite or countable 
group of symmetries induced by rigid transformations of $\R^n$.

\begin{comment}
An immersed minimal surface in $\R^n$ for $n\ge 3$ is the image of a conformal 
harmonic immersion $F:X\to\R^n$ from an open conformal surface $X$; see 
\cite{Osserman1986,AlarconForstnericLopez2021}.
We shall call such $F$ a conformal minimal immersion.
Euclidean isometries of $\R^n$ form an affine group generated by the orthogonal 
group $O(n,\R)$ and the additive group $(\R^n,+)$ acting by translations. 
A Euclidean isometry maps a minimal surface in $\R^n$ to another minimal surface, 
so it is of interest to find minimal surfaces which are invariant under a given group 
of Euclidean isometries. We consider this problem for finite subgroups of $O(n,\R)$
(and, more generally, for countable groups of rigid transformations)  
and for orientable conformal surfaces $X$, that is, for Riemann surfaces. 
Symmetries of specific minimal surfaces were studied by many authors, but 
we are interested in general existence results. 
\end{comment}

An immersed minimal surface in $\R^n$ for $n\ge 3$ is the image of a conformal 
harmonic immersion $F:X\to\R^n$ from an open conformal surface $X$,
which can be taken to be a Riemann surface if it is orientable; see 
\cite{Osserman1986,AlarconForstnericLopez2021}.
We shall call such $F$ a {\em conformal minimal immersion}.
Euclidean isometries of $\R^n$ form an affine group generated by the orthogonal 
group $O(n,\R)$ and the additive group $(\R^n,+)$ acting by translations. 
By also adding dilations, we obtain the group of {\em rigid transformations}.
Postcomposition by a rigid transformation of $\R^n$ 
maps minimal surfaces to minimal surfaces,
and rigid transformations are the largest class of self-maps of $\R^n$ with this property. 
Hence, it is of interest to find minimal surfaces which are invariant under a given group 
of rigid transformations. 
Symmetries of specific minimal surfaces were studied by many authors, but 
we are interested in general existence results. 

Let $X$ be a connected open Riemann surface and $G$ be a finite subgroup of 
the group $\Aut(X)$ of holomorphic automorphisms of $X$.
The stabiliser $G_x=\{g\in G:gx=x\}$ of any point $x\in X$ is a cyclic subgroup of $G$,   
which is trivial for points in the complement of a closed
discrete subset of $X$ (see \cite[Corollary 3.5, p.\ 93]{Miranda1995}). 
Assume that $G$ also acts on $\R^n$ by orthogonal transformations.
The following result and Remark \ref{rem:necessary} provide 
a necessary and sufficient condition for the existence of a $G$-equivariant 
conformal minimal immersion $X\to \R^n$.

%
%  MAIN THEOREM
%
\begin{theorem}\label{th:main}
Let $G$ be a finite group acting effectively on a connected open 
Riemann surface $X$ by holomorphic automorphisms, and acting on 
$\R^n$ $(n\ge 3)$ by orthogonal transformations. 
If for every nontrivial stabiliser $G_x$ $(x\in X)$ there is a 
2-plane $\Lambda_x\subset \R^n$ on which $G_x$ acts effectively by rotations, 
then there exists a conformal minimal immersion $F:X\to \R^n$ such that 
\begin{equation}\label{eq:equivariant}
	F(gx) = gF(x)\ \ \text{holds for all $x\in X$ and $g\in G$},
\end{equation}
and the image $F(X)$ is not contained in any affine hyperplane of $\R^n$.
\end{theorem}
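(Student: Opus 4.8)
The plan is to represent a conformal minimal immersion $F:X\to\R^n$ via its Weierstrass data, that is, by a nowhere-vanishing holomorphic 1-form $\Phi=(\phi_1,\dots,\phi_n)$ on $X$ with values in the null quadric
$$
\nullq = \bigl\{z\in\C^n : z_1^2+\cdots+z_n^2=0\bigr\}\setminus\{0\},
$$
subject to the period-vanishing condition $\Re\int_\gamma\Phi=0$ for all closed loops $\gamma$ in $X$; then $F(x)=\Re\int_{x_0}^x\Phi$ up to translation. The $G$-equivariance requirement (\ref{eq:equivariant}) translates into the covariance condition $g^*\Phi = g\cdot\Phi$ for all $g\in G$, where on the left $g$ acts by pullback on 1-forms via its action on $X$ and on the right by the given orthogonal action on $\C^n$. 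So the task becomes: produce a $G$-covariant, nowhere-vanishing, $\nullq$-valued holomorphic 1-form on $X$ with vanishing real periods whose components do not all lie in a hyperplane.

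First I would pass to the quotient orbifold $Y=X/G$ (an open Riemann surface with a finite set of orbifold points coming from the nontrivial stabilisers) and reinterpret $G$-covariant objects on $X$ as sections of an appropriate holomorphic vector bundle or sheaf on $Y$; equivalently, work $G$-equivariantly on $X$ directly. The hypothesis on the 2-planes $\Lambda_x$ is exactly what is needed to build a \emph{local} $\nullq$-valued covariant 1-form near each ramification orbit: near a point $x$ with cyclic stabiliser $G_x$ acting on $\Lambda_x$ by rotation through $2\pi/m$, one uses a local coordinate $\zeta$ in which $G_x$ acts by $\zeta\mapsto e^{2\pi i/m}\zeta$ and writes a model null form whose $\Lambda_x$-components are built from $\zeta^{a}d\zeta$ with the residue/order chosen to match the rotation weight, while the components in $\Lambda_x^\perp$ are chosen invariant; checking that such a model is null and nonvanishing and covariant is a finite linear-algebra computation, and the 2-plane hypothesis is precisely the obstruction-free case.

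Next I would globalise. The space of $\nullq$-valued covariant 1-forms with prescribed local models at the ramification orbits is, away from periods, governed by an Oka principle: the punctured null quadric $\nullq$ is an Oka manifold (this is classical, used throughout \cite{AlarconForstnericLopez2021}), and the fibre bundle of admissible Weierstrass data over $Y$ has Oka fibres, so there is no topological obstruction to a global holomorphic section once one exists continuously — and a continuous $G$-covariant section exists because we can patch the local models using a $G$-invariant partition of unity (the convex-combination trick does not preserve the null condition, so instead one uses the Oka/gluing machinery for $\nullq$-valued maps, exactly as in the non-equivariant theory, carried out $G$-equivariantly over $Y$). The main obstacle, and the step requiring the most care, is the \textbf{period-closing}: one must correct the section so that $\Re\int_\gamma\Phi=0$ for all $\gamma\in H_1(X;\Z)$ while staying $G$-covariant, $\nullq$-valued, nowhere zero, and respecting the fixed local models. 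This is handled by the standard period-dominating spray construction — attach to a fixed $G$-covariant $\Phi_0$ a holomorphic family $\Phi_t$ (a spray with values in $\nullq$, built from $G$-invariant vector fields on $\nullq$ or from multiplying by entire functions) whose period map $t\mapsto\bigl(\Re\int_\gamma\Phi_t\bigr)_\gamma$ is a submersion onto $\R^{n\cdot b_1}$ at $t=0$; then the implicit function theorem, applied $G$-equivariantly (averaging the spray over $G$ so that covariance is preserved, and using that the $G$-action on $H_1(X;\Z)\otimes\R^n$ respects the period map), yields a parameter value with all real periods zero. A dimension count shows the spray can be taken large enough to dominate all periods. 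Finally, nondegeneracy of $F(X)$ (not contained in a hyperplane) is an open and dense condition in the parameter, so it can be arranged simultaneously, perhaps after first enlarging the spray by components that perturb $\Phi$ in directions transverse to any fixed hyperplane; this completes the construction.
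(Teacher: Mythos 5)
Your overall architecture --- Weierstrass data valued in the null quadric, descent to the quotient $X/G$, local models at the ramification points built from the $2$-plane hypothesis, the Oka principle for the resulting branched fibration with Oka fibre $\nullq_*$, and a period-dominating spray closed up by the implicit function theorem --- is essentially the route the paper takes. But two of your steps, as described, would fail. First, covariance of $\Phi$ together with $\Re\int_\gamma\Phi=0$ for all $\gamma\in H_1(X,\Z)$ does \emph{not} imply equivariance of $F(x)=v+\int_{x_0}^x\Re\,\Phi$: one computes $F(gx)=gF(x)+c_g$ with $c_g=v-gv+\int_{x_0}^{gx_0}\Re\,\Phi$, so one must additionally arrange $c_g=0$ for every $g\in G$ (condition \eqref{eq:gv}; see Theorem \ref{th:GWR}). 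Your period map onto $\R^{nb_1}$ controls only the homology periods and ignores these extra vector-valued conditions; the paper controls them by dominating periods over a system of arcs generating $\pi_1(X_1/G)$ (which includes arcs from $x_0$ to $gx_0$), not merely $H_1(X,\Z)$. For a finite orthogonal $G$ the omission happens to be repairable a posteriori, since $(c_g)_{g\in G}$ is a $1$-cocycle and $H^1(G,\R^n)=0$, so a translation of $F$ kills it --- but some such argument must be supplied, and it is unavailable in the infinite-group version of the theorem.

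Second, neither of your proposed devices for making the period-dominating spray $G$-equivariant works. Averaging a $\nullq_*$-valued spray over $G$ leaves the (non-convex) null quadric --- the very failure you note for the patching step --- and the $G$-invariant vector fields on $\nullq_*$ (the centralizer of $G$ in $\mathfrak{o}(n,\C)$ together with the Euler field) need not span the tangent space; if $G$ acts irreducibly on $\C^n$ you may be left with only radial deformations, which cannot dominate all periods. The device the paper uses, and the technical heart of Lemma \ref{lem:main}, is different: one builds an ordinary, non-symmetric period-dominating spray supported on \emph{one} arc in each $G$-orbit of a suitable system of arcs (on which $G$ acts freely off $X_0$) and then extends it to the whole orbit by equivariance; domination on one arc per orbit suffices because, by \eqref{eq:equivarianceofintegral}, the periods of an equivariant map over the remaining arcs of the orbit are determined by those over the chosen one. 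Without this (or an equivalent) mechanism your construction does not yield an equivariant datum with the required period control.
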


A map $F:X\to \R^n$ is said to be {\em $G$-equivariant} if 
condition \eqref{eq:equivariant} holds, and {\em nondegenerate} if $F(X)$ 
is not contained in any affine hyperplane of $\R^n$.
Note that the image $S=F(X)\subset \R^n$ of a $G$-equivariant map 
is $G$-invariant, i.e., $gS=S$ for all $g\in G$.
Since the fixed-point-set of a nontrivial linear map on $\R^n$ is
a proper linear subspace of $\R^n$, nondegeneracy of $F$ implies that every
$g\in G$ which acts effectively on $\R^n$ also acts effectively on 
$F(X)\subset \R^n$. Thus, if $H$ is the normal subgroup of $G$
consisting of all elements $g\in G$ which act trivially on $\R^n$, then
$G/H$ is a symmetry group of the minimal surface $F(X)$ in Theorem \ref{th:main}.
Any additional symmetries can be eliminated by using a general position
argument in the proof of the theorem. 
% In the source side, we can replace $X$ by the Riemann surface $X/H$.

Our proof of Theorem \ref{th:main} gives several additions concerning approximation, 
interpolation, and the control of the flux; see Theorem \ref{th:mainbis} and compare 
with the results in the non-equivariant case  
\cite[Theorems 3.6.1 and 3.6.2]{AlarconForstnericLopez2021}.
In particular, the map $F$ in Theorem \ref{th:main} can be chosen to be 
the real part of a $G$-equivariant null holomorphic immersion $X\to\C^n$;
see Theorem \ref{th:null}. 

In Section \ref{sec:FTC} we construct $G$-equivariant minimal surfaces with 
complete ends of finite total curvature on given finitely many orbits of $G$
on the Riemann surface $X$; 
see Theorem \ref{th:FTC} and Corollary \ref{cor:FTC}.
However, we do not know whether all ends can be made complete
and of finite total curvature; see Problem \ref{prob:FTC}. 
In particular, the construction of complete $G$-equivariant minimal 
surfaces of finite total curvature remains an open problem.

Finally, in Section \ref{sec:infinite} we show that the analogue of Theorem \ref{th:main} 
also holds if $G$ is an infinite discrete group acting on $\R^n$ by rigid transformations, 
and acting on a Riemann surface $X$ properly discontinuously by holomorphic 
automorphisms such that the quotient surface $X/G$ is noncompact; 
see Theorem \ref{th:infinite}. This case is only relevant if $X$ has genus at most one, 
since every Riemann surface of genus $\ge 2$ has at most finitely many 
automorphisms by a theorem of Hurwitz \cite{Hurwitz1893}
(see also \cite[Theorem 3.9]{Miranda1995}). 

%
%  Antonio's remark
%
Minimal surfaces with symmetries appeared in the very origin of the theory; 
indeed, most classical examples have symmetries (the catenoid, the helicoid, 
Scherk's surfaces, Riemann's minimal examples, Schwarz's surfaces, etc.).
All mentioned examples have infinite groups of symmetries and are parameterized
by plane domains. See also the discussion in Example \ref{ex:sphere}. 
Finding examples with given groups of symmetries from  
Riemann surfaces of genus $\ge 1$ is a more difficult task due to
the problem of controlling the periods of their Weierstrass data, 
and only a few examples have been described explicitly. 
In this paper we give general existence results for such minimal surfaces. 
The techniques developed in the paper also 
seem promising for constructing minimal surfaces with given 
symmetries and satisfying various additional conditions
such as being complete or proper;
see Problems \ref{prob:CY} and \ref{prob:proper}.

%
%  Remark: conditions are necessary
%
\begin{remark}\label{rem:necessary}
The conditions on stabilisers  in Theorem \ref{th:main} are necessary. Indeed, 
let $x\in X$ be a point with a nontrivial stabiliser $G_{x}$ of order $k>1$. 
There is a local holomorphic coordinate $z$ on a neighbourhood $U\subset X$ 
of $x$, with $z(x)=0$, in which a generator $g$ of the cyclic group 
$G_x$ is the rotation $g z = \E^{\imath \phi}z$ through  
the angle $\phi=2\pi/k$ (see Miranda \cite[Corollary 3.5, p.\ 93]{Miranda1995}). 
Assume that $F:X\to \R^n$ is a $G$-equivariant conformal immersion,  
not necessarily harmonic. Differentiating the identity \eqref{eq:equivariant} and
taking into account that $g$ acts linearly on $\R^n$ gives  
\[
	dF_{x} \circ dg_{x} = g\circ dF_{x}:
	T_{x}X \to \Lambda_x:= dF_{x}(T_{x}X)\subset \R^n.
\]
Since $dF_{x} : T_{x}X \to \Lambda_x$ is a conformal linear isomorphism, 
we infer that $\Lambda_x$ is a $G_x$-invariant $2$-plane in $\R^n$ on which 
$g$ acts as the rotation $R_\phi$ through the angle $\phi$, 
so the conditions in Theorem \ref{th:main} hold. 
Conversely, these conditions imply that the local conformal linear
embedding $U\to \Lambda$ is $G_x$ equivariant.  
These conditions are superfluous for minimal surfaces with branch points. 
\end{remark}

%
%  REMARK: X DISCONNECTED
%
\begin{remark}\label{rem:disconnectedX}
In Theorem \ref{th:main} and its corollaries presented below, 
the Riemann surface $X$ is assumed to be connected. However, 
these results generalize to the case when for every connected component $X'$ of $X$
the stabiliser group $G_{X'}=\{g\in G: gX'=X'\}$ acts effectively on $X'$. 
This is equivalent to asking that the stabiliser $G_x$ of a generic point $x\in X$ is trivial.

The first immediate reduction is to the case when
$G$ acts transitively on the set of connected components of $X$.
(See the argument preceding 
\cite[Theorem 4.1]{KutzschebauchLarussonSchwarz2021JGEA}
by Kutzschebauch et al.)
Assuming this to be the case, fix a component $X'$ of $X$.
Since $G_x\subset G_{X'}$ holds for every $x\in X'$,
Theorem \ref{th:main} provides a $G_{X'}$-equivariant conformal
minimal immersion $F':X'\to\R^n$. If $X''$ is another component of $X$
and $h\in G$ is such that $h(X'')=X'$, we define $F$ on $X''$ by
$F(x)=h^{-1}F'(hx)$ for $x\in X''$. It is immediate that the resulting map $F:X\to\R^n$
is a $G$-equivariant conformal minimal immersion.

If on the other hand the stabiliser $G_{X'}$ of some component $X'$ of $X$ does 
not act effectively on $X'$, then the conditions in Theorem \ref{th:main} must be adjusted.
We shall not consider this case.
\end{remark}

%
%   CONCERNING THE PROOF
%
Theorem \ref{th:main} is a special case of Theorem \ref{th:mainbis}, 
which also involves approximation and interpolation 
of a given $G$-equivariant minimal immersion on a suitable $G$-invariant subset of $X$
by global $G$-equivariant conformal minimal immersions. 
The proof relies on two main ingredients.
The classical Enneper--Weierstrass representation of minimal surfaces
in $\R^n$ reduces the problem to constructing holomorphic maps
(the so-called Weierstrass data) from the given 
Riemann surface $X$ into the punctured null quadric $\nullq_*$ in $\C^n$ 
(see \eqref{eq:nullq}) having suitable integrals (periods) on a system
of curves in $X$ whose union contains a basis of the homology group $H_1(X,\Z)$ 
and some other arcs which are used to guarantee the interpolation conditions.
In our case, the Weierstrass data are $G$-equivariant holomorphic maps from $X$ 
to the projective compactification of the null quadric, and we formulate
a Weierstrass representation theorem for $G$-equivariant minimal surfaces;
see Theorem \ref{th:GWR}. The main point is to approximate
such maps on certain $G$-invariant Runge subsets of $X$ by global $G$-equivariant
holomorphic maps having suitable periods. We combine the approach 
developed in \cite{AlarconForstneric2014IM} 
(see also \cite[Theorem 3.6.1]{AlarconForstnericLopez2021}) 
with \cite[Theorem 4.1]{KutzschebauchLarussonSchwarz2021JGEA} 
due to Kutzschebauch et al., which shows how to 
reduce Oka-theoretic problems for certain $G$-equivariant holomorphic 
maps to the nonequivariant case for sections of an associated holomorphic map
having ramification points; see Section \ref{sec:towards}. 
Ultimately, the main complex-analytic tool that we use is an Oka-theoretic result for sections 
of ramified holomorphic maps (see \cite[Theorem 2.1]{Forstneric2003FM} and 
\cite[Theorem 6.14.6]{Forstneric2017E}), combined with the techniques from 
\cite{AlarconForstneric2014IM} and  \cite[Chapter 3]{AlarconForstnericLopez2021}
which enable us to control periods of maps $X\to\nullq_*$. The main step 
is Lemma \ref{lem:main}, and Theorem \ref{th:main} is then proved 
in Section \ref{sec:proof}. 
The same method applies if $G$ is an infinite discrete group acting on $X$
properly discontinuously; see Theorem \ref{th:infinite}.

In the remainder of this introduction, we give several corollaries to Theorem \ref{th:main}
and we place our results in the context of what is known.
The following corollary is immediate.  

\begin{corollary}\label{cor:free}
If $G$ is a finite group acting freely (without fixed points) on an open 
Riemann surface $X$ by holomorphic automorphisms, then for every action of 
$G$ by orthogonal maps on $\R^n$ $(n\ge 3)$ there exists a 
nondegenerate $G$-equivariant 
conformal minimal immersion $X\to \R^n$, which can be chosen to be the
real part of a $G$-equivariant null holomorphic immersion $X\to\C^n$.
\end{corollary}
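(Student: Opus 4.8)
The plan is to deduce Corollary \ref{cor:free} directly from Theorem \ref{th:main} by observing that a free action trivializes the hypothesis on stabilisers. First I would note that if $G$ acts freely on $X$, then for every $x\in X$ the stabiliser $G_x$ is the trivial group, so there are no nontrivial stabilisers at all. Consequently the hypothesis of Theorem \ref{th:main} — namely, that for each nontrivial $G_x$ there exist a $2$-plane $\Lambda_x\subset\R^n$ on which $G_x$ acts effectively by rotations — is vacuously satisfied, regardless of how $G$ acts on $\R^n$ by orthogonal maps. (One should also check that the action on $X$ is effective: this is automatic, since a free action is in particular effective, as any $g$ fixing some point would have to be the identity.) Therefore Theorem \ref{th:main} applies verbatim and yields a conformal minimal immersion $F:X\to\R^n$ satisfying $F(gx)=gF(x)$ for all $x\in X$, $g\in G$, whose image is not contained in any affine hyperplane, i.e.\ $F$ is a nondegenerate $G$-equivariant conformal minimal immersion.

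Next I would address the refinement that $F$ can be taken to be the real part of a $G$-equivariant null holomorphic immersion $X\to\C^n$. Here I would invoke Theorem \ref{th:null}, which (as stated in the introduction) provides exactly this strengthening of Theorem \ref{th:main}: the map $F$ produced there is $\mathrm{Re}\,\Phi$ for a $G$-equivariant null holomorphic immersion $\Phi:X\to\C^n$. Since the hypotheses of Theorem \ref{th:null} are the same stabiliser conditions as in Theorem \ref{th:main}, which are again vacuous under a free action, the conclusion transfers immediately. Thus both assertions of the corollary follow at once.

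There is essentially no obstacle here — the corollary is a trivial specialization — so the only thing worth spelling out carefully is why ``free'' is exactly the condition that makes the stabiliser hypothesis disappear, and why the effectiveness assumption in Theorem \ref{th:main} is not an additional constraint. Both points are one-line observations: freeness means $G_x=\{e\}$ for all $x$, and a free action of a group is automatically effective. I would therefore present the proof in two or three sentences, citing Theorem \ref{th:main} for the main statement and Theorem \ref{th:null} for the null-holomorphic refinement, and remark that nondegeneracy is part of the conclusion of Theorem \ref{th:main} itself.
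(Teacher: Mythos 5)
Your proposal is correct and matches the paper's own treatment: the paper simply calls the corollary ``immediate'' from Theorem \ref{th:main}, precisely because a free action leaves no nontrivial stabilisers and so the hypothesis is vacuous, with the null-holomorphic refinement supplied by Theorem \ref{th:null} (via the flux control of Remark \ref{rem:flux}). Nothing further is needed.
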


If $G$ is a finite group acting on a connected Riemann surface $X$ by holomorphic 
automorphisms, then the union $X_0$ of fixed point sets of elements of $G$ is a closed
discrete subset of $X$ (see  \eqref{eq:X0}), which is finite if $X$ has finite genus but
may be infinite otherwise; see Section \ref{sec:prelim}. Removing from $X$ any closed 
$G$-invariant subset $X'$ containing $X_0$, the group $G$ acts freely on the open 
Riemann surface $X\setminus X'$, and hence Corollary \ref{cor:free} applies to 
the pair $(X\setminus X',G)$.

%
%   Every finite group arises
%
\begin{corollary}\label{cor:everyG}
For every connected open Riemann surface $X$ and finite subgroup $G\subset \Aut(X)$ 
of order $n\ge 2$ there are an effective action of $G$ by orthogonal transformations on 
$\R^{2n}$ and a nondegenerate 
$G$-equivariant conformal minimal immersion $X\to\R^{2n}$.
\end{corollary}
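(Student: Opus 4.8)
The plan is to build a single orthogonal action of $G$ on $\R^{2n}$ for which the stabiliser hypothesis of Theorem \ref{th:main} is satisfied automatically, and then to invoke that theorem. Set $n=|G|\ge 2$ and let $\R[G]=\bigoplus_{g\in G}\R\, e_g$ be the left regular representation, $h\cdot e_g=e_{hg}$; this is an orthogonal (indeed permutation) action on $\R^n$, and it is faithful since $h\cdot e_1=e_h\ne e_1$ for $h\ne 1$. As target I would take the doubled regular representation $V=\R[G]\oplus\R[G]\cong\R^{2n}$ with $G$ acting diagonally; equivalently, $V=\C[G]$ viewed as a real vector space of real dimension $2n$, with $h\cdot e_g=e_{hg}$ extended $\C$-linearly. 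With the evident orthonormal basis this action is orthogonal, and it is effective because its restriction to the first summand $\R[G]$ already is. Note that $2n\ge 4\ge 3$, as required by Theorem \ref{th:main}.

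Next I would check the condition on stabilisers. Let $x\in X$ have nontrivial stabiliser $C:=G_x$; by \cite[Corollary 3.5, p.\ 93]{Miranda1995} it is cyclic, of some order $k\ge 2$, with generator $c$. Since $C$ acts freely on $G$ by left translation, the $C$-orbits in $G$ all have size $k$, so $\R[G]|_C\cong(\R[C])^{n/k}$ as a real $C$-module, and hence $V|_C\cong(\C[C])^{n/k}$. Using the decomposition of the complex regular representation $\C[C]\cong\bigoplus_{j=0}^{k-1}\C_{\zeta^j}$, where $\zeta$ is a fixed primitive $k$-th root of unity and $c$ acts on the line $\C_{\zeta^j}$ by multiplication by $\zeta^j$, the summand $\C_{\zeta}$ supplies a real $2$-plane $\Lambda_x\subset\R^{2n}$ on which $c$ acts as the rotation through the angle $2\pi/k$. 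Thus $C=G_x$ acts on $\Lambda_x$ effectively by rotations, which is exactly the hypothesis of Theorem \ref{th:main}. (When $k=2$ this rotation is $-\Id$ on $\Lambda_x$; this case is precisely where a single copy of $\R[G]$ would fail to contain a suitable $2$-plane, which is why the dimension $2n$ rather than $n$ is needed.)

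With this action in place, Theorem \ref{th:main} applied to the pair $(X,G)$ yields a $G$-equivariant conformal minimal immersion $F:X\to\R^{2n}$ whose image lies in no affine hyperplane, i.e.\ a nondegenerate one, which is the assertion of the corollary. I do not expect a genuine obstacle here: the entire content of the corollary is the combination of the faithfulness of the regular representation with the elementary observation that its complexification (equivalently, its double) contains, for every cyclic subgroup of $G$ that can occur as a point stabiliser, the rotation $2$-plane demanded by Theorem \ref{th:main}. The only nonroutine point to write out carefully is this representation-theoretic check, together with the remark that it genuinely requires doubling the dimension when $2$-element stabilisers are present.
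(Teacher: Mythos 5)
Your argument is correct and essentially identical to the paper's proof: both take the complex regular representation $\C[G]\cong\R^{2n}$ with $h\,\cdotp e_g=e_{hg}$, extract for each cyclic stabiliser of order $k$ an eigenplane on which the generator acts as the rotation through $2\pi/k$ (you via the decomposition $V|_{G_x}\cong(\C[G_x])^{n/k}$, the paper via the eigenvalues of $g$ on the span of $e_{g^j}$ --- the same fact), and then invoke Theorem \ref{th:main}. Only your parenthetical aside is slightly off: a single copy of $\R[G]$ fails for $k=2$ only when the sign character of $G_x$ has multiplicity one in $\R[G]|_{G_x}$, i.e.\ when $|G|=2$; for $|G|\ge 4$ the real regular representation already contains a $2$-plane on which an involution acts by $-\Id$.
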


\begin{proof}
I wish to thank Urban Jezernik for the following argument. 
Consider the regular representation of $G$ on the complex Euclidean space $\C^n$
with the basis vectors $e_g$ for $g\in G$, 
where an element $h\in G$ acts by $h e_g=e_{hg}$.
For a fixed $g\in G$ of order $k>1$ let $\Sigma_g$ denote the $k$-dimensional
$\C$-linear subspace of $\C^n$ spanned by the vectors $e_{g^j}$ $(j=0,1,\ldots,k-1)$ 
corresponding to the elements of the cyclic group $\langle g\rangle$. 
Clearly, $\Sigma_g$ is $g$-invariant 
and the eigenvalues of the $\C$-linear isomorphism 
$g:\Sigma_g\to\Sigma_g$ are precisely all the $k$-th roots of $1$. 
In particular, there is a vector $0\ne w\in \Sigma_g$ with $gw=\E^{\imath 2\pi/k}w$.
Identifying $\C^n$ with $\R^{2n}$, the $2$-plane $\Lambda_g\subset \R^{2n}$ 
determined by the complex line $\C w$ is $g$-invariant 
and $g$ acts on it as a rotation through the angle $2\pi/k$.
Since every stabiliser $G_x$ in Theorem \ref{th:main} is a cyclic subgroup of $G$,
the conditions of Theorem \ref{th:main} hold for this representation of $G$.
\end{proof}

Given a smooth surface $X$, an immersion $F:X\to \R^n$ induces on $X$ 
a unique structure of a conformal surface such that $F$ is a conformal immersion
(see \cite[Sect.\ 1.10]{AlarconForstnericLopez2021}).
% and Remark \ref{rem:onlyconformal}). 
This conformal structure is clearly invariant under postcomposition of
$F$ by rigid motions of $\R^n$. 
%Euclidean isometries and dilations of $\R^n$, i.e., the group of rigid motions. 
In particular, if $F$ is an embedding and the image surface 
$\wt X=F(X)\subset \R^n$ is $G$-invariant for a finite subgroup $G$ of $O(n,\R)$ 
(i.e., $g\wt X=\wt X$ holds for all $g\in G$), 
there is a unique action of $G$ on $X$ by conformal automorphisms
such that $F$ is a $G$-equivariant conformal embedding. 
% (This also holds for a generic immersion $F$.)
If in addition the surface $X$ is oriented and every $g\in G$ preserves 
the orientation on $X$, then $G$ acts on $X$ by holomorphic automorphisms.
If this action is effective, Remark \ref{rem:necessary} shows that the conditions 
on stabilisers hold, so Theorem \ref{th:main} implies the following corollary. 

%
%  SPECIAL CASE OF THE MAIN THEOREM
%
\begin{corollary}\label{cor:special}
Assume that $G$ is a finite subgroup of the orthogonal group $O(n,\R)$ for some
$n\ge 3$ and $X \subset \R^n$ is a smoothly embedded, connected, 
oriented, noncompact, $G$-invariant surface  
such that every $g\in G$ preserves the orientation on $X$, 
and $g$ induces the identity map on $X$ only if $g=1\in G$. 
Then, $X$ endowed with the complex structure induced by the embedding 
$X\hra \R^n$ and with the induced action of $G$ on $X$ by holomorphic 
automorphisms admits a nondegenerate 
$G$-equivariant conformal minimal immersion $F:X\to\R^n$.
\end{corollary}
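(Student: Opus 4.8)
The plan is to deduce the corollary directly from Theorem~\ref{th:main}, along the lines sketched in the paragraphs preceding the statement. Write $\iota\colon X\hookrightarrow\R^n$ for the inclusion and endow $X$ with the conformal structure for which $\iota$ is a conformal immersion; as $X$ is oriented, this is a structure of a Riemann surface, and since $X$ is connected and noncompact it is a connected open Riemann surface. Recall from the discussion above that this structure is the unique one for which $\iota$ is conformal.

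First I would verify that, with this structure, $G$ acts on $X$ by holomorphic automorphisms and effectively. Because $X\subset\R^n$ is $G$-invariant, each $g\in G\subset O(n,\R)$ restricts to a diffeomorphism $g|_X\colon X\to X$ satisfying $\iota\circ g|_X=g\circ\iota$. Since $g$ is a Euclidean isometry, $g\circ\iota$ is again a conformal immersion for the given structure on $X$; but $g\circ\iota=\iota\circ g|_X$ is also a conformal immersion for the structure obtained by pulling the given one back along $g|_X$, so by uniqueness of the induced structure these coincide, i.e.\ $g|_X$ is a conformal automorphism of $X$. By hypothesis $g|_X$ preserves the orientation of $X$, hence it is holomorphic. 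The resulting homomorphism $G\to\Aut(X)$, $g\mapsto g|_X$, is injective, because $g|_X=\mathrm{id}_X$ forces $g=1$ by hypothesis. Thus $G$ acts effectively on the connected open Riemann surface $X$ by holomorphic automorphisms and on $\R^n$ by orthogonal transformations, as required by Theorem~\ref{th:main}.

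Next I would check the hypothesis of Theorem~\ref{th:main} concerning stabilisers. The inclusion $\iota$ is tautologically a $G$-equivariant conformal immersion --- indeed a $G$-equivariant conformal embedding --- since $\iota(g|_X x)=g(x)=g\,\iota(x)$ for all $x\in X$ and $g\in G$. Remark~\ref{rem:necessary} applies to any $G$-equivariant conformal immersion, harmonic or not; applied to $\iota$ it shows that for every $x\in X$ with nontrivial stabiliser $G_x$ of order $k>1$, the $2$-plane $\Lambda_x:=d\iota_x(T_xX)\subset\R^n$ is $G_x$-invariant and a generator of $G_x$ acts on $\Lambda_x$ as a rotation through the angle $2\pi/k$; in particular $G_x$ acts effectively on $\Lambda_x$ by rotations.

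With both hypotheses established, Theorem~\ref{th:main} provides a nondegenerate $G$-equivariant conformal minimal immersion $F\colon X\to\R^n$, which is precisely the assertion. The argument carries no substantive obstacle; the one step that merits attention --- and the only place where the hypotheses on $G$ are used --- is verifying that the action of $G$ on $X$ induced by the embedding $X\hookrightarrow\R^n$ is by holomorphic automorphisms and is effective, which is exactly what makes the reduction to Theorem~\ref{th:main} legitimate.
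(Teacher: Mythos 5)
Your proposal is correct and follows essentially the same route as the paper: the paragraphs preceding Corollary~\ref{cor:special} establish exactly that the induced conformal structure makes the $G$-action holomorphic and effective, and that Remark~\ref{rem:necessary} applied to the tautological $G$-equivariant conformal embedding $X\hookrightarrow\R^n$ verifies the stabiliser hypothesis, so that Theorem~\ref{th:main} applies. Your write-up merely makes these steps more explicit.
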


\begin{remark}\label{rem:homotopy}
In the context of Corollary \ref{cor:special}, it is natural to ask whether there is
a regular homotopy of $G$-equivariant (conformal) 
immersions $F_t:X\to \R^n$ $(t\in [0,1])$
connecting the initial embedding $F_0:X\hra\R^n$ to a conformal minimal immersion
$F_1:X\to\R^n$. An inspection of our proof of Theorem \ref{th:main} 
shows that there is a homotopy of $G$-equivariant maps $f_t:X\to Y$ $(t\in [0,1])$ 
into the projective closure $Y$ \eqref{eq:Y} of the null quadric 
such that $f_0=2\di F_0/\theta$ (see \eqref{eq:theta0} for the definition
of the holomorphic 1-form $\theta$ on $X$), the map $f_1$ is holomorphic, 
and $f_1=2\di F_1/\theta$. However, we do not know whether the maps $f_t$ 
for $0<t<1$ can be chosen such that they integrate to immersions $F_t:X\to\R^n$
with $2\di F_t=f_t \theta$. 
\end{remark}

%
%   The group SO(3,\R) acting on the sphere
%
\begin{example}[{\bf Equivariant minimal surfaces of genus zero}] \label{ex:sphere}
Let $S$ be the unit sphere in $\R^3$. The induced Riemann surface structure
on $S$ is that of the Riemann sphere $\C\cup\{\infty\}=\CP^1$, 
the unique complex structure on $S$ up to biholomorphisms. 
The special orthogonal group $SO(3,\R)$ acts on $S$ by orientation 
preserving isometries, hence by holomorphic automorphisms, and it 
forms a real 3-dimensional subgroup of the holomorphic automorphism group 
\[
	\Aut(S)=\Big\{z\mapsto \frac{az+b}{cz+d}:\ \ a,b,c,d\in\C,\ ad-bc=1\Big\}.
\]
Finite subgroups of $SO(3,\R)$ are called {\em spherical von Dyck groups}.
Besides the cyclic and the dihedral groups, there are 
the symmetry groups of Platonic solids, the so-called {\em crystallographic groups}: 
the alternating group $A_4$ of order $12$ is the group of symmetries of the tetrahedron, 
the symmetric group $S_4$ of order $24$ is the group of symmetries of the cube 
and the octahedron, and the alternating group $A_5$ of order $60$ is the group 
of symmetries of the icosahedron and the dodecahedron. 
Corollary \ref{cor:special} shows that every spherical von Dyck group of order 
$m>1$ is a group of symmetries of a minimal surface in $\R^3$ parameterized 
by a complement of $m$ points in $\CP^1$. However, this case is already known. 
After the initial work of Goursat \cite{Goursat1887}, it was shown by Xu \cite{Xu1995}, 
using explicit functions in the Enneper--Weierstrass representation,
that any closed subgroup $G$ of $SO(3,\R)$, 
which is not isomorphic to $SO(2,\R)$ or $SO(3,\R)$, is the symmetry group of a 
complete immersed minimal surface in $\R^3$ of genus zero with finite total curvature 
and embedded ends. In the genus zero case, the only period vanishing
conditions are those coming from the ends, which amount to 
vanishing of the residues of the Weierstrass data at such points. 
Examples of (families of) minimal surfaces in $\R^3$ with 
groups of $SO(3,\R)$ symmetries were given by Jorge and Meeks \cite{JorgeMeeks1983T}, 
Rossman \cite{Rossman1995}, Small \cite{Small1999}, and others. 
Choi, Meeks and White proved in \cite{ChoiMeeksWhite1990}
that if $X$ is a minimal surface in $\R^3$ with a catenoidal end, then every intrinsic 
local isometry of $X$ extends to a rigid motion of $\R^3$. 
As a corollary due to Xu \cite[Corollary 2.2]{Xu1995}, one 
sees that if such $X$ has finite total curvature and embedded ends, at least one of 
which is catenoidal, then the symmetry group of $X$ is a closed subgroup of $SO(3)$.
\end{example}

It is natural to ask which finite groups arise in the context of Theorem \ref{th:main}
for Riemann surfaces of genus $\ggot\ge 1$. 
The study of finite groups $G$ acting effectively on a connected Riemann surface
$X$ by holomorphic automorphisms is based on the observation that 
the orbit space $X/G$ has the structure of a Riemann surface such that the quotient 
projection $\pi:X\to X/G$ is holomorphic, it is ramified precisely at the points 
$x\in X$ with nontrivial stabiliser group $G_x$, 
and the ramification index at such a point equals $k_x=|G_x|$, the order of the stabiliser. 
Furthermore, stabilisers of points in the $G$-orbit of $x$ are conjugate cyclic
subgroups of $G$, so there are $|G|/k_x$ of them 
(see Miranda \cite[Proposition 3.3, p.\ 77]{Miranda1995}).  
By the uniformization theorem for Riemann surfaces, we have that $X=\U/K$ where $\U$ is 
either the Riemann sphere $\CP^1=\C\cup\{\infty\}$, the complex number field $\C$, 
or the upper halfplane
\begin{equation}\label{eq:H}
	\H=\{z=x+\imath y \in\C:y>0\}, 
\end{equation}
and $K$ is a subgroup of $\Aut(\U)$ acting properly discontinuously 
and without fixed points. If $\U=\CP^1$ then $K$ is the trivial group,
and if $\U=\C$ then $K$, if nontrivial,
is a free cyclic group of rank one or two generated by one or two translations.
The case $\U=\H$ is more complicated and will be discussed in Example
\ref{ex:hyperbolic}. Any subgroup $G\subset \Aut(X)$ is then isomorphic
to a quotient group $\Gamma/K$, where $\Gamma$ is a subgroup of 
$\Aut(\U)$ containing $K$ as a normal subgroup. By analysing these conditions, 
the Riemann--Hurwitz formula provides limitations on the 
number and type of finite or discrete groups acting on a given
compact Riemann surface $X$; see Miranda \cite[Chapter III]{Miranda1995}. 
These results also apply to open Riemann surfaces 
of finite genus. Indeed, by Maskit \cite{Maskit1968} every open 
Riemann surface $X$ of finite genus embeds in a compact Riemann surface 
$X^*$ of the same genus such that every holomorphic automorphism 
of $X$ extends to a holomorphic automorphism of $X^*$. 
Applying this technique, Miranda \cite[pp.\ 80--82]{Miranda1995} 
discusses finite subgroups of $\Aut(X)$ for compact Riemann surfaces. 
In the simplest case when $X=\CP^1$, the quotient projection
$\pi:\CP^1\to \CP^1$ has either two or three ramification points. 
The case of two ramification points corresponds to cyclic 
groups of rotations on $\C$. In the case of three ramification points and 
considering $\CP^1$ as the round sphere in $\R^3$, we have the dihedral group 
and the crystallographic groups $A_4,\ S_4$, and $A_5$ 
mentioned in Example \ref{ex:sphere}.

%
%  Riemann surfaces of genus $\ge 2$
%
\begin{example}[{\bf Equivariant minimal surfaces of genus $\ge 2$}]\label{ex:hyperbolic} 
The projective special linear group $PSL(2,\R)=SL(2,\R)/\{ \pm I \}$ 
of degree two over the real numbers can be realised as the 
group of orientation preserving isometries of the hyperbolic plane.
The Poincar\'e halfplane model is given by the upper halfplane 
$\H$ \eqref{eq:H}, endowed with the metric 
$\frac{dx^2+dy^2}{y^2}$ of constant negative curvature,   
on which $PSL(2,\R)$ acts by holomorphic automorphisms
\begin{equation}\label{eq:autH}
	\H\ni z\mapsto \frac{az+b}{cz+d} \quad \text{for}\ a,b,c,d\in\R,\ ad-bc=1.
\end{equation}
This action realises $PSL(2,\R)$ as the holomorphic automorphism group $\Aut(\H)$. 
(One can also use the Poincar\'e disc model $\D=\{z\in \C:|z|<1\}$  
with the Poincar\'e metric $\frac{4|dz|^2}{(1-|z|^2)^2}$
and $PSL(2,\R)$ acting as the group $\Aut(\D)$.) 
A subgroup $\Gamma\subset PSL(2,\R)$ is called a {\em Fuchsian group}
if it acts on $\H$ (by maps \eqref{eq:autH}) properly discontinuously.
General Fuchsian groups were first studied by Poincar\'e  \cite{Poincare1882}, 
who was motivated by Fuchs \cite{FuchsL1880}. 

Every Riemann surface $X$ of genus $\ggot\ge 2$ is a quotient  
$X=\H/K$, where $K\subset \Aut(\H)$ is a Fuchsian group acting without fixed points. 
Every group $G\subset \Aut(X)$ is then of the form $G\cong \Gamma/K$, 
where $\Gamma\subset \Aut(\H)$ is subgroup containing $K$ as a normal subgroup.
If $X$ is compact then $\Gamma$ is of a special form described by Moore in 
\cite[p.\ 923]{MooreMJ1970}. 

For a compact Riemann surface $X$ of genus $\ggot\ge 2$,  
Hurwitz's automorphism theorem \cite{Hurwitz1893} 
(see also \cite[Theorem 3.9, p.\ 96, and Chapter VII]{Miranda1995}) says that
the automorphism group $\Aut(X)$ is finite of order at most $84(\ggot-1)$. 
In view of the aforementioned theorem by Maskit \cite{Maskit1968}, the same
holds on every noncompact Riemann surface of finite genus $\ggot\ge 2$. 
The maximal size $84(\ggot-1)$ can arise if and only if $X$ admits a branched 
cover $X\to \CP^1$ with three ramification points, of indices 2, 3, and 7. 
A group for which the maximum is achieved is called a {\em Hurwitz group}, 
% (see Wilson \cite{Wilson2001}), 
and the corresponding Riemann surface is a {\em Hurwitz surface}.
Klein's quartic curve of genus 3 (see \cite{Klein1879}) is a Hurwitz surface of lowest genus. 
From Klein's result, Macbeath \cite{Macbeath1961} deduced the existence of 
Hurwitz surfaces of infinitely many genuses. The next smallest genus 
of a Hurwitz surface is $\ggot = 7$; see Macbeath \cite{Macbeath1965}
for an explicit description. 
Most Riemann surfaces of genus $\ggot \ge2$ do not have any nontrivial holomorphic 
automorphisms. 
\end{example}

Greenberg \cite{Greenberg1960} proved that every 
countable group $G$ is the automorphism group of a noncompact Riemann surface, 
which can be taken to have a finitely generated fundamental group if $G$ is finite. 
He also proved \cite[Theorem 6']{Greenberg1974} that every finite group is the 
automorphism group of a compact Riemann surface
%(This was also stated without proof in [8, Theorem 4].) 
%A short proof for finitely generated groups was given by 
(see also Jones \cite{GAJones2019}). Greenberg's theorem,
together with Corollary \ref{cor:everyG}, implies the following result.

%
% COROLLARY TO A THEOREM OF GREENBERG
%
\begin{corollary}\label{cor:everyG2}
For every finite group $G$ of order $n>1$ there exist an open connected 
Riemann surface $X$, effective actions of $G$ by holomorphic automorphisms on $X$ 
and by orthogonal transformations on $\R^{2n}$, and a nondegenerate $G$-equivariant
conformal minimal immersion $X\to\R^{2n}$. The surface $X$ can be chosen to be 
the complement of $n$ points in a compact Riemann surface.
\end{corollary}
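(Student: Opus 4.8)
The plan is to combine Greenberg's realisation theorem for automorphism groups with Corollary \ref{cor:everyG}; the entire content of the statement resides in those two results. First I would invoke \cite[Theorem 6']{Greenberg1974}, which provides a compact Riemann surface $X^*$ whose holomorphic automorphism group $\Aut(X^*)$ is isomorphic to $G$. Fixing such an isomorphism, we regard $G$ as a (necessarily effective) group of holomorphic automorphisms of $X^*$. Since $|G|=n>1$ is finite whereas $\Aut(\CP^1)$ and the automorphism groups of genus-one surfaces are infinite, $X^*$ has genus $\ggot\ge 2$, although this plays no role in what follows.

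Next I would produce the desired open surface by deleting a free orbit. As recalled in Section \ref{sec:prelim} (around \eqref{eq:X0}), the set $X^*_0$ of points of $X^*$ with nontrivial stabiliser is a closed discrete subset, hence finite since $X^*$ is compact. Choose any $p\in X^*\setminus X^*_0$; then $G_p=\{1\}$, so the orbit $Gp$ consists of exactly $n$ distinct points, and $X:=X^*\setminus Gp$ is the complement of $n$ points in the compact Riemann surface $X^*$. Removing finitely many points from a connected Riemann surface leaves it connected, and $Gp$ is $G$-invariant, so $G$ acts on the open connected Riemann surface $X$ by holomorphic automorphisms; this action is effective because an element fixing every point of the dense subset $X$ fixes every point of $X^*$ by continuity, hence is trivial in $G=\Aut(X^*)$.

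Finally I would apply Corollary \ref{cor:everyG} to the pair $(X,G)$ with $|G|=n\ge 2$: it furnishes an effective action of $G$ on $\R^{2n}$ by orthogonal transformations together with a nondegenerate $G$-equivariant conformal minimal immersion $X\to\R^{2n}$, which is precisely the assertion. There is no genuine obstacle in this argument: the only points requiring (routine) verification are the existence of a free orbit, guaranteed by finiteness of $X^*_0$, and the preservation of connectedness and of effectiveness of the $G$-action upon deleting it; both are elementary.
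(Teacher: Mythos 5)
Your proposal is correct and is essentially the paper's own argument: the paper derives Corollary \ref{cor:everyG2} by combining Greenberg's theorem \cite[Theorem 6']{Greenberg1974} with Corollary \ref{cor:everyG}, and the ``complement of $n$ points'' clause is obtained exactly as you do, by deleting a free $G$-orbit (which exists since the fixed-point set $X^*_0$ is finite) from a compact Riemann surface with $\Aut(X^*)\cong G$. Your write-up merely makes explicit the routine verifications (connectedness, effectiveness of the restricted action) that the paper leaves implicit.
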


Since the conditions in Theorem \ref{th:main} pertain to nontrivial
isotropy subgroups of a given automorphism group $G\subset \Aut(X)$, 
it is of interest to understand the possible number of fixed points of 
holomorphic automorphisms of Riemann surfaces. There is a
considerable literature on this subject. Hurwitz \cite{Hurwitz1893} proved 
that every nontrivial holomorphic automorphism of a compact Riemann surface 
of genus $\ggot$ has at most $2\ggot +2$ fixed points.  
In view of the result of Maskit \cite{Maskit1968}, Hurwitz's theorem 
also holds on every open Riemann surface of finite genus.
Moore \cite{MooreMJ1970} determined the number of fixed points of 
each element of a cyclic group of automorphisms of a compact Riemann surface 
with genus at least two. It was shown by Minda \cite[Theorem 1]{Minda1979} that
if $X$ is a Kobayashi hyperbolic Riemann surface and $\phi:X\to X$ is a holomorphic
self-map with at least two fixed points, then $\phi$ is an automorphism 
of $X$ of finite order. 
%The proof uses Schwarz lemma for the map $\D\to\D$ induced by $\phi$ on the universal covering space $\D\to X$.
We refer to the survey in Miranda \cite[Chapter 3]{Miranda1995} for further information
on this topic. 

%
%  Which groups can arise for a given n?
%
\begin{problem}
Let $X$ be an open Riemann surface of genus $\ggot\ge 2$ 
with a nontrivial automorphism group $\Aut(X)$. Which subgroups $G$ of $\Aut(X)$
are symmetry groups of conformal minimal surfaces $X\to \R^n$
for a given $n\ge 3$? (By Corollary \ref{cor:everyG} every such group arises
for $n=2|G|$.) 
\end{problem}

Of particular interest are minimal surfaces of finite total Gaussian curvature.
We discuss this case in Section \ref{sec:FTC} and obtain an  
analogue of Theorem \ref{th:main} for $G$-equivariant minimal surfaces
having some ends of finite total Gaussian curvature; see Problem \ref{prob:FTC} and 
Theorem \ref{th:FTC}. 

%
%  CALABI-YAU
%
One may also ask whether the Calabi--Yau problem for minimal surfaces 
(see \cite[Chapter 7]{AlarconForstnericLopez2021}
for background and a survey on this problem) 
has an affirmative answer for minimal surfaces with symmetries.
Explicitly, we pose the following problem.

\begin{problem}\label{prob:CY}
Assume the hypotheses of Theorem \ref{th:main}, and let $M$ be a 
compact, smoothly bounded, $G$-invariant domain in the Riemann surface $X$ 
such that no element of $G$ has any fixed point on $bM$.
Does there exist a continuous $G$-equivariant map $F:M\to\R^n$ whose 
restriction to the interior $\mathring M=M\setminus bM$ is a complete conformal 
minimal immersion? 
\end{problem}

An affirmative answer for the trivial group is given by 
\cite[Theorem 1.1]{AlarconDrinovecForstnericLopez2015PLMS}
(see also \cite[Theorem 7.4.1]{AlarconForstnericLopez2021}), where in 
addition the map $F|_{bM}:bM\to\R^n$ is a topological embedding.

Here is another interesting problem.

%
%  PROPER G-EQUIVARIANT SURFACES
%
\begin{problem}\label{prob:proper}
In the context of Theorem \ref{th:main}, is there a {\em proper} 
nondegenerate $G$-equivariant conformal minimal immersion $X\to \R^n$?
\end{problem}

We expect the answer to be affirmative. Without the equivariance condition,
proper conformal minimal immersions $X\to\R^n$ $(n\ge 3)$ from an arbitrary
open Riemann surface $X$ exist in great abundance; see
Alarc\'on and L\'opez \cite{AlarconLopez2012JDG} for a 
construction of such surfaces in $\R^3$ which project properly to a plane 
$\R^2\subset \R^3$, and \cite[Theorem 3.10.3]{AlarconForstnericLopez2021} 
for any dimension $n\ge 3$. % and with several additions.
In \cite[Section 10.3]{AlarconForstnericLopez2021} the reader can
also find a survey of the history of this subject.

%
% ABOUT NONORIENTABLE SURFACES
%
It seems likely that an analogue of Theorem \ref{th:main} holds for nonorientable conformal 
surfaces; however, the nature of the isotropy groups can be more complicated, 
and the fixed point set of a Euclidean isometry restricted to the surface may contain curves. 
We shall not study this case here. 
Recall that every conformal minimal immersion $X\to \R^n$ from a nonorientable 
open conformal surface $X$ is given by a $\Igot$-invariant conformal minimal immersion 
$\wt X\to \R^n$ from the orientable 2-sheeted cover $\wt X\to X$ whose 
deck transformation is a fixed-point-free antiholomorphic involution $\Igot:\wt X\to \wt X$.
For the theory of such surfaces, see \cite{AlarconLopez2015GT,AlarconForstnericLopezMAMS}.

%
%	SUPERMINIMAL SURFACES
%
The problem treated in this paper can be considered for conformal minimal surfaces in any
Riemannian manifold of dimension $\ge 3$ with a nontrivial group of isometries. 
%  in place of the flat Euclidean space $(\R^n,ds^2)$. 
Although the connection to complex analysis is lost in general, 
there are some other special cases (besides the Euclidean spaces)
which could possibly be approached with these techniques. 
One of them concerns {\em superminimal surfaces} 
in self-dual or anti-self-dual Einstein four-manifolds. This case can be treated using  
the Bryant correspondence in Penrose twistor spaces, thereby 
reducing problems on superminimal surfaces to those concerning
holomorphic Legendrian curves in complex
contact three-manifolds. We refer to the survey of this subject 
in \cite{Forstneric2021JGEA} where
the Calabi--Yau problem (see Problem \ref{prob:CY}) was solved affirmatively
for superminimal surfaces in such Riemannian four-manifolds. (The special 
case concerning the four-sphere with the spherical metric was obtained beforehand 
by Alarc\'on et al. in \cite{AlarconForstnericLarusson2022GT}.)
See also \cite{Forstneric2023AFSTM} for the construction of proper 
superminimal surfaces in the hyperbolic four-space.
 
%
% ON G-EQUIVARIANT HOLOMORPHIC MAPS
%
Important examples of minimal surfaces are holomorphic curves 
in complex Euclidean spaces $\C^n$ $(n>1)$ and, more generally, in K\"ahler manifolds. 
%Construction of $G$-equivariant holomorphic maps was studied extensively for holomorphic curves and higher dimensional Stein manifolds and Stein spaces. 
In principle, equivariant holomorphic maps are easier to construct than 
general minimal surfaces since many more operations and techniques are available. 
Heinzner proved in \cite{Heizner1988} that if $G$ is a reductive 
complex Lie group acting on a reduced Stein space $X$ by holomorphic automorphisms, 
then $X$ is $G$-equivariantly embeddable in a Euclidean space $\C^n$ 
on which $G$ acts by $\C$-linear automorphisms if and only if 
the Luna slice type of $(X,G)$ is finite.  
Heinzner's theorem implies in particular that every open Riemann surface with an 
action of a finite group $G$ of holomorphic automorphisms is equivariantly 
embeddable in some $\C^n$ with a $\C$-linear action of $G$.
See also \cite{Heinzner1989,Heinzner1991}. Further results 
were obtained by Heinzner and Huckleberry \cite{HeiznerHuckleberry1994},
Fritsch and Heinzner \cite{FritschHeinzner2022},
D'Angelo \cite{DAngelo2021}, among many others. In particular,
D'Angelo and Xiao \cite{DAngeloXiao2017} studied equivariant proper 
rational maps between balls in complex Euclidean spaces.
For results on the $G$-equivariant Oka principle, also used 
in this paper, see the survey by 
Kutzschebauch et al.\ \cite{KutzschebauchLarussonSchwarz2022}. 
%\cite{HeinznerKutzschebauch1995,KutzschebauchLarussonSchwarz2015JRAM,KutzschebauchLarussonSchwarz2017TAMS,KutzschebauchLarussonSchwarz2018MA,KutzschebauchLarussonSchwarz2021JGEA,KutzschebauchLarussonSchwarz2022}.
%
%However, nontrivial additional ingredients are necessary to construct $G$-equivariant conformal minimal surfaces or holomorphic null curves.

%
%
%  SECTION: PRELIMINARIES
%
%
\section{Preliminaries}\label{sec:prelim}
Let $X$ be a connected open Riemann surface. An immersion $F:X\to \R^n$ 
is conformal if and only if its $(1,0)$-differential
$\di F=(\di F_1,\ldots, \di F_n)$ satisfies the nullity condition 
\[
	\sum_{i=1}^n (\di F_i)^2=0, 
\]
and it is harmonic if and only if $\di F$ is a holomorphic $1$-form on $X$
(see \cite{Osserman1986} or \cite[Sect.\ 2.3]{AlarconForstnericLopez2021}). 
A conformal immersion is harmonic if and only if its image is a minimal
surface, i.e., its mean curvature vector field vanishes identically.

Pick a nowhere vanishing holomorphic $1$-form $\theta$ on $X$;
such exists by Gunning and Narasimhan \cite{GunningNarasimhan1967}
and can be chosen to be the differential $\theta=dh$ of a holomorphic immersion $h:X\to\C$. 
Then, a conformal minimal immersion $F:X\to \R^n$ satisfies 
$2\di F=f\theta$ with $f=2\di F/\theta :X\to \nullq_*=\nullq\setminus\{0\}$ 
a holomorphic map into the punctured null quadric, where the 
null quadric in $\C^n$ is the affine subvariety
\begin{equation}\label{eq:nullq}
	\nullq=\{z=(z_1,\ldots,z_n)\in \C^n: z_1^2+z_2^2+\cdots+z_n^2=0\}. 
\end{equation}  
These observations lead to the Enneper--Weiestrass formula 
(see \cite[Theorem 2.3.4]{AlarconForstnericLopez2021}), 
which says that any conformal minimal immersion $F:X\to \R^n$ is of the form
\begin{equation}\label{eq:EW}
	F(x)=F(x_0) +  \int_{x_0}^x \Re(f\theta), \quad x,x_0\in X,
\end{equation}
where $f:X\to \nullq_*$ is a holomorphic map such that $\int_C \Re(f\theta)=0$
for any closed path $C$ in $X$ (so the integral in \eqref{eq:EW} is independent of the 
path of integration), and $2\di F=f\theta$.

In the sequel, we shall allow the 1-form $\theta$ on $X$ to have a discrete zero set
and will let $f$ be a meromorphic map 
such that the vector-valued 1-form $f\theta$ is holomorphic and 
nowhere vanishing on $X$, i.e., the poles of $f$ exactly cancel the zeros of $\theta$.
(In the proof of Theorem \ref{th:FTC} we shall allow $f\theta$ to have poles.) 

Let $G$ be a finite group acting faithfully on $X$ by holomorphic automorphisms. 
We have already mentioned that the stabiliser $G_x$ of any point $x\in X$
is a cyclic group of some order $k=k(x)\in\N$   
which is generated in a suitable local holomorphic coordinate $z$ 
on a neighbourhood of $x\in X$, with $z(x)=0$, by the rotation
$z\mapsto \E^{\imath\phi}z$, where
$\E^{\imath\phi}$ is a primitive $k$-th root of $1$. It follows that $z^k$ is
a local holomorphic coordinate on the orbit space $X/G_x$, which is therefore
nonsingular. Note that $G_{gx}=gG_xg^{-1}$ for all $x\in X$ and $g\in G$, so the stabilisers
of points in a $G$-orbit are pairwise conjugate subgroups of $G$.
We can identify the orbit $Gx=\{gx:g\in G\}$ with the set of cosets $\{gG_x:\ g\in G\}$. 
For any $g\in G\setminus\{1\}$ the set of fixed points 
$
	\mathrm{Fix}(g)=\{x\in X:gx=x\}
$ 
is a closed discrete subset of $X$ (see Miranda \cite[Proposition 3.2, p.\ 76]{Miranda1995}), 
which is finite if the surface $X$ has finite genus (see Minda \cite{Minda1979}), but 
it can be infinite otherwise. Their union
\begin{equation}\label{eq:X0}
	X_0= \bigcup_{g\in G\setminus \{1\}} \mathrm{Fix}(g) =\{x\in X: G_x\ne \{1\}\} 
\end{equation}
is a closed, discrete, $G$-invariant subset of $X$, and its complement 
\begin{equation}\label{eq:X1}
	X_1=X\setminus X_0 =\{x\in X: gx\ne x\ \text{for all}\ g\in G\setminus \{1\}\}
\end{equation}
is an open $G$-invariant domain. For every $x\in X$ the orbit $Gx$ has $|G|/|G_x|$ points; 
this number equals $|G|$ if and only if $x\in X_1$. 
Since the group $G$ is finite, the orbit space $X/G$ 
is an open Riemann surface, the quotient projection 
$\pi:X\to X/G$ is a holomorphic map which branches precisely 
at the points in $X_0$, and $\pi:X_1\to X_1/G$ is a holomorphic covering projection
of degree $|G|$. (See Miranda \cite[Theorem 3.4, p.\ 78]{Miranda1995} for these facts.)
Choose a holomorphic immersion $\tilde h:X/G\to \C$; see 
\cite{GunningNarasimhan1967}. Then, the holomorphic map
\begin{equation}\label{eq:h}	
	h:=\tilde h\circ \pi:X\to \C
\end{equation}
is $G$-invariant and it branches precisely at the  points of $X_0$. 
Applying the chain rule to the equation $h\circ g=h$ $(g\in G)$ shows that the 
holomorphic $1$-form 
\begin{equation}\label{eq:theta0}
	\theta=dh=d(\tilde h\circ \pi)=\pi^* d\tilde h
\end{equation}
on $X$ satisfies the following invariance conditions for every $g\in G$:
\begin{equation}\label{eq:theta}
	\theta_{gx} \circ dg_x= \theta_x \ \ \text{for all $x\in X$,\ \ and 
	$\theta_x=0$ if and only if $x\in X_0$}.
\end{equation}
More precisely, $\theta$ has a zero of order $|G_x|-1$ at a point $x\in X_0$. 

%
%  DISCONNECTED X
%
\begin{remark}\label{rem:disconnectedX2}
Everything said so far in this section also holds if $X$ is disconnected and 
for every connected component $X'$ of $X$ the stabiliser subgroup 
$G_{X'}=\{g\in G: gX'=X'\}$ acts effectively on $X'$. 
(This is equivalent to asking that the stabiliser $G_x$
of a generic point $x\in X$ is trivial.) Without loss of generality,
one may always assume that  $G$ acts transitively on the set of connected
components of $X$, so the orbit space $X/G$ is connected. 
These observations can be used to justify the generalization of our results
mentioned in Remark \ref{rem:disconnectedX}. 
\end{remark}

%
%  Action on R^n, C^n, and A
%
Suppose now that the group $G$ also acts on $\R^n$ $(n\ge 3)$ by orthogonal maps.
%(This action need not be effective.) 
Considering $\R^n$ as the standard real subspace of $\C^n$, the orthogonal
group $O(n,\R)$ is a subgroup of the complex orthogonal group $O(n,\C)$, 
the subgroup of $GL(n,\C)$ preserving the $\C$-bilinear form 
$(z,w)\mapsto \sum_{i=1}^n z_i w_i$. 
The punctured null quadric $\nullq_*=\nullq\setminus \{0\}$ (see \eqref{eq:nullq}) 
is smooth and $O(n,\C)$-invariant, hence also $G$-invariant. 
Consider $\C^n$ as an affine chart in the projective space $\CP^n$.
Let $\overline \nullq\subset \CP^n$ denote the projective closure of $\nullq$ and set 
\begin{eqnarray}
\label{eq:Y}
	Y &=& \overline \nullq\setminus \{0\} = \nullq_*\cup Y_0, \\
\label{eq:Y0}
	Y_0 &=& Y\setminus \nullq_*=\bigl\{[z_1:\cdots:z_n] \in 
	\CP^{n-1}: z_1^2+z_2^2+\cdots+z_n^2=0\bigr\}.
\end{eqnarray}
Let $p:\C^n\setminus\{0\}\to\CP^{n-1}$ denote the projection 
$p(z_1,\ldots,z_n)=[z_1:\cdots:z_n]$. The restriction 
$p:\nullq_*\to Y_0$ is a holomorphic fibre bundle with fibre $\C^*=\C\setminus\{0\}$, 
and the natural extension $p:Y\to Y_0$ which equals the identity map on 
$Y_0$ is a holomorphic line bundle. The action of $O(n,\C)$ on $\C^n$ 
extends to an action on $\CP^n$ with the hyperplane at infinity 
as an invariant complex submanifold. 
Hence, the action of $G$ on $\C^n$ extends to an action of $G$ 
on the manifold $Y$ \eqref{eq:Y} by holomorphic automorphisms. 

We denote by $u\,\cdotp v$ the Euclidean inner product on $\R^n$ 
and by $\|u\|=\sqrt{u\,\cdotp u}$ the Euclidean norm. 
To any oriented 2-plane $0\in \Lambda\subset \R^n$ we associate a complex line
$L\subset \C^n$, contained in the null quadric $\nullq$ \eqref{eq:nullq}, by choosing  
an oriented basis $(u,v)$ of $\Lambda$ such that $\|u\|=\|v\|\ne 0$ and 
$u\,\cdotp v=0$ (such a pair is called a {\em conformal frame}) and setting
\begin{equation}\label{eq:L}
	L=L(\Lambda) = \C(u-\imath v) \subset \nullq\subset \C^n.
\end{equation}
Clearly, $L$ does not depend on the choice of the oriented conformal frame 
on $\Lambda$. A rotation $R_\phi$ on $\Lambda$ in the positive direction 
corresponds to the multiplication by $\E^{\imath \phi}$ on the complex line $L(\Lambda)$. 

If $F:X\to\R^n$ is a conformal immersion then, in any local holomorphic
coordinate $z=x+\imath y$ on $X$, the vectors $\frac{\di F}{\di x}(z)$
and $\frac{\di F}{\di y}(z)$ form a conformal frame and the corresponding
complex line $L(\zeta)\subset \nullq$ is spanned by the vector 
$
	\frac{\di F}{\di x}(z) - \imath \frac{\di F}{\di y}(z) =
	2\frac{\di F}{\di z}(z).  
$

The following proposition summarizes the main properties of immersed $G$-equivariant
conformal minimal surfaces, and it justifies the hypotheses in Theorem \ref{th:main}.

%
%   PROPERTIES OF G-EQUIVARIANT CMI'S
%
\begin{proposition}\label{prop:properties}
Assume that $X$ is a connected open Riemann surface and 
$G$ is a finite group acting effectively on $X$ by holomorphic automorphisms 
and acting on $\R^n$ $(n\ge 3)$ by orthogonal transformations. 
Let the sets $X_0\subset X$ and $Y_0\subset Y$ 
be given by \eqref{eq:X0} and \eqref{eq:Y0}, respectively. Set $X_1=X\setminus X_0$, 
and let $\theta$ be the holomorphic 1-form on $X$ given by \eqref{eq:theta0}.
If $F:X\to\R^n$ is a $G$-equivariant conformal minimal immersion then 
\begin{equation}\label{eq:f}	
	f=2\di F/\theta: X\to Y
\end{equation}
is a holomorphic $G$-equivariant map satisfying $f^{-1}(Y_0)=X_0$, and the 
following assertions hold for every point $x_0\in X_0$.
\begin{enumerate}[\rm (a)]
\item The stabiliser $G_{x_0}$ is a cyclic group with a generator $g_0$ 
acting in a local holomorphic coordinate $z$ on $X$, with $z(x_0)=0$, 
by $g_0 z = \E^{\imath \phi}z$, where $\phi=2\pi/k$ and $k=|G_{x_0}|$. 
\item 
The tangent plane $\Lambda=dF_{x_0}(T_{x_0}X) \subset \R^n$ is $G_{x_0}$-invariant, 
$g_0$ acts on $\Lambda$ by rotation $R_\phi$ through the angle $\phi=2\pi/k$ in the positive
direction (with respect to the orientation induced from $T_{x_0}X$ by $dF_{x_0}$), 
and $g_0$ acts on the null line $L=L(\Lambda)$ \eqref{eq:L}
as multiplication by $\E^{\imath \phi}$.
\item
We have that $g_0F(x_0)=F(x_0)$, and the vector $F(x_0)$ is orthogonal to $\Lambda$.
\vspace{1mm}
\item 
We have that $f(x_0)= p(L) \in Y_0\subset \CP^n\setminus \C^n$, 
and $f$ has a pole of order $|G_{x_0}|-1$ at $x_0$.
\end{enumerate}
\end{proposition}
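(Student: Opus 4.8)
The plan is to verify each assertion by combining the Enneper--Weierstrass correspondence $2\di F=f\theta$ with the local normal form for the $G$-action near a fixed point, essentially unwinding the computation already sketched in Remark \ref{rem:necessary}. First I would establish that $f=2\di F/\theta$ is well defined and $G$-equivariant as a map $X\to Y$: since $F$ is a conformal minimal immersion, $2\di F$ is a holomorphic $\nullq_*$-valued $1$-form on $X$, and $\theta$ is a holomorphic $1$-form vanishing exactly on $X_0$ to order $|G_x|-1$ (as recorded in \eqref{eq:theta}). Away from $X_0$ the quotient $f=2\di F/\theta$ is a holomorphic map into $\nullq_*\subset\C^n$; near a point $x_0\in X_0$ the form $2\di F$ is nonvanishing while $\theta$ vanishes, so $f$ blows up, and passing to the projective closure $Y=\overline\nullq\setminus\{0\}$ gives a holomorphic extension with $f(x_0)\in Y_0$. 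Equivariance follows by differentiating \eqref{eq:equivariant} to get $dg_x\circ\di F_x$... more precisely $\di F_{gx}\circ dg_x = g\circ\di F_x$, dividing by $\theta$ and using $\theta_{gx}\circ dg_x=\theta_x$ from \eqref{eq:theta}; this yields $f(gx)=g\cdot f(x)$ on $X_1$, hence on all of $X$ by continuity, and $f^{-1}(Y_0)=X_0$ since $Y_0$ is exactly where the $\C^n$-valued representative has a pole, i.e.\ where $\theta=0$.

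Next I would treat assertions (a)--(d) at a fixed $x_0\in X_0$. Part (a) is just the quoted normal form of Miranda \cite[Corollary 3.5]{Miranda1995}: $G_{x_0}$ is cyclic, generated by $g_0$ acting as $z\mapsto\E^{\imath\phi}z$ with $\phi=2\pi/k$, $k=|G_{x_0}|$, in a suitable coordinate $z$ centered at $x_0$. For (b), differentiate \eqref{eq:equivariant} at $x_0$ with $g=g_0$: since $g_0x_0=x_0$ and $g_0$ acts linearly on $\R^n$, we get $dF_{x_0}\circ (dg_0)_{x_0} = g_0\circ dF_{x_0}$ on $T_{x_0}X$, so $\Lambda:=dF_{x_0}(T_{x_0}X)$ is $g_0$-invariant and $g_0|_\Lambda$ is conjugate via the conformal isomorphism $dF_{x_0}$ to $(dg_0)_{x_0}=$ multiplication by $\E^{\imath\phi}$ on $T_{x_0}X\cong\C$, i.e.\ the rotation $R_\phi$ in the induced orientation. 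By the correspondence \eqref{eq:L} between oriented $2$-planes and null lines, a positive rotation by $\phi$ on $\Lambda$ corresponds to multiplication by $\E^{\imath\phi}$ on $L=L(\Lambda)$, giving the last clause of (b).

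For (c), apply \eqref{eq:equivariant} with $x=x_0$, $g=g_0$: $F(x_0)=F(g_0x_0)=g_0F(x_0)$, so $F(x_0)$ lies in the fixed subspace of the orthogonal map $g_0$ on $\R^n$. Since $g_0$ acts on $\Lambda$ by the nontrivial rotation $R_\phi$, the fixed subspace meets $\Lambda$ only in $0$; as $g_0$ is orthogonal, its fixed subspace is the orthogonal complement of the span of $\Lambda$ together with the other nontrivial eigenspaces, hence in particular $F(x_0)\perp\Lambda$. Finally (d): in the coordinate $z$ of (a), write $\di F = \varphi(z)\,dz$ with $\varphi$ holomorphic, $\varphi(0)\neq 0$ (as $F$ is an immersion), while $\theta$ has a zero of order $k-1$ at $z=0$ by \eqref{eq:theta}, say $\theta = c\,z^{k-1}(1+O(z))\,dz$ with $c\neq 0$; hence $f=2\varphi/(c z^{k-1}(1+O(z)))$ has a pole of order exactly $k-1=|G_{x_0}|-1$, and its value in $\CP^n$ at $x_0$ is $p$ of the leading coefficient $2\varphi(0)$, which spans the null line $L(\Lambda)$ by the remark following \eqref{eq:L} (the line spanned by $2\,\di F/\di z = \di F/\di x - \imath\,\di F/\di y$ is exactly $L$ of the tangent plane). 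Thus $f(x_0)=p(L)\in Y_0$.

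The routine parts are the local Laurent computations in (d); the one point requiring a little care is the compatibility of orientations and signs in (b), namely checking that the orientation on $\Lambda$ induced from $T_{x_0}X$ via $dF_{x_0}$ is the one for which $R_\phi$ (not $R_{-\phi}$) corresponds to multiplication by $\E^{\imath\phi}$ on $L(\Lambda)=\C(u-\imath v)$ — this is where the conventions in \eqref{eq:L} and in the displayed formula $2\,\di F/\di z = \di F/\di x - \imath\,\di F/\di y$ must be lined up, but it is a sign bookkeeping rather than a genuine obstacle. Everything else is a direct consequence of differentiating \eqref{eq:equivariant} together with the structural facts about $\theta$ and $Y$ already assembled in Section \ref{sec:prelim}.
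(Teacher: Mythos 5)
Your proposal is correct and follows essentially the same route as the paper: differentiate the equivariance identity, take the $(1,0)$-part using $\C$-linearity of $dg_x$ together with the invariance property \eqref{eq:theta} of $\theta$ to get equivariance of $f$, invoke Miranda's local normal form for (a)--(b), use orthogonality of $g_0$ for (c), and read off the pole order of $f$ from the zero order of $\theta$ for (d). Your explicit Laurent computation in (d) and the identification $f(x_0)=p(L)$ via the spanning vector $2\,\di F/\di z$ are just a more detailed rendering of what the paper leaves implicit.
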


\begin{proof}
Recall that $\di F=(\di F_1,\ldots,\di F_n)$ is a holomorphic $1$-form with values
in $\nullq_*=\nullq\setminus\{0\}$ \eqref{eq:nullq}. 
Since $\theta$ vanishes precisely on $X_0$, the map \eqref{eq:f} is holomorphic 
and satisfies  
\begin{equation}\label{eq:poles}
	f^{-1}(Y_0)=\{x\in X: f(x)\in Y_0\} = X_0.
\end{equation}
Differentiation of the $G$-equivariance equation $F\circ g= g F$ 
(see \eqref{eq:equivariant}), taking into account that $g$ acts on $\R^n$ as 
a linear (orthogonal) transformation, gives
\begin{equation}\label{eq:Fg0}
	dF_{gx}\circ dg_x = g\, dF_x\ \ \text{for every $x\in X$ and $g\in G$}.  
\end{equation}
Writing $dF=\di F+\dibar F$ we have 
\[
	\di F_{gx} \circ dg_x + \dibar F_{gx} \circ dg_x = g\, \di F_x + g\, \dibar F_x.
\]
%for $x\in X$ and $g\in G$. 
Since $dg_x$ is $\C$-linear on $T_xX$, the $(1,0)$-part of the above equation gives 
\begin{equation}\label{eq:Fg}
	\di F_{gx}\circ dg_x = g\, \di F_x\ \ \text{for every $x\in X$ and $g\in G$}.  
\end{equation}
From this and the condition \eqref{eq:theta} on the 1-form $\theta$ we obtain
\begin{equation}\label{eq:fg}
	f(gx) = \frac{2\di F_{gx}}{\theta_{gx}} 
		= \frac{2\di F_{gx}\circ dg_x}{\theta_{gx} \circ dg_x}
		= \frac{g\, 2\di F_x}{\theta_x} = g f(x),
\end{equation}
which shows that $f:X\to Y$ is $G$-equivariant.
(The second equality holds since the quotient of two $\C$-linear forms
on $\C$ is invariant under precomposition by a linear isomorphism.) 

Let $k=|G_{x_0}|>1$. By Remark \ref{rem:necessary}, a generator $g_0$ of $G_{x_0}$ 
acts in a certain local holomorphic coordinate $z$ on $X$ based at $x_0$ by the rotation
through the angle $\phi=2\pi/k$, so (a) holds. Since the map 
$dF_{x_0}:T_{x_0}X\cong \R^2 \to \Lambda=dF_{x_0}(\R^2)\subset\R^n$ 
is a conformal linear isomorphism, the relation \eqref{eq:Fg0} at $x=x_0$ implies (b). 
Since $g_0\in \Aut(X)$ fixes $x_0$ and $F$ is $g_0$-equivariant, we have 
$g_0F(x_0)=F(g_0 x_0)=F(x_0)$. As $g_0$ acts on $\R^n$ by 
an orthogonal transformation which restricts to a nontrivial rotation on $\Lambda$,
we infer that $F(x_0)$ is either the zero vector or an eigenvector
of $g_0$ with the eigenvalue $1$ which is orthogonal to $\Lambda$, so (c) holds. 
Since the holomorphic $1$-form $f\theta=2\di F$ has values in $\nullq_*$ and 
$\{\theta=0\}=X_0$, $f$ is a meromorphic map to $\nullq_*$
with poles at the points of $X_0$, and the order of the pole of $f$ 
at $x_0\in X_0$ equals the order of zero of $\theta$ 
at $x_0$, which is $|G_{x_0}|-1$. This proves (d). 
\end{proof}

\begin{comment}
%
%  CONCERNING Corollary \ref{cor:special}
%
\begin{remark}\label{rem:onlyconformal}
Under the assumptions of Corollary \ref{cor:special} and assuming that $X\subset\R^n$
is an embedded oriented surface of class $\Cscr^r$ for some $r\ge 2$, the Riemannian
metric on $X$ induced by the Euclidean metric of $\R^n$ is of class 
$\Cscr^{r-1}$, and hence of H\"older class $\Cscr^{r-2,\alpha}$ for any $0<\alpha<1$.
By \cite[Theorem 1.8.6]{AlarconForstnericLopez2021}, this metric together 
with the orientation of $X$ determines a unique structure of a 
Riemann surface on $X$ which makes the inclusion $F_0:X\hra\R^n$
conformal and of H\"older class $\Cscr^{r-1,\alpha}$. 
Since every $g\in G$ acts on $\R^n$ by an orthogonal transformation,
the restricted map $g|_X:X\to X$ is a conformal
diffeomorphism of $X$. By the assumption, $g|_X$ preserves the orientation of $X$
and hence is a holomorphic automorphism of $X$. Note that the embedding 
$F_0$ is $G$-equivariant. Thus, all conclusions of Proposition \ref{prop:properties} hold, 
except that $f=2\di F_0/\theta$ in \eqref{eq:f} is a $G$-equivariant map 
of H\"older class $\Cscr^{r-2,\alpha}$ (not necessarily holomorphic) 
satisfying \eqref{eq:poles}. 
In particular, condition (b) in Proposition \ref{prop:properties} holds, so the action 
of $G$ on $X$ satisfies the hypotheses of Theorem \ref{th:main}. 
\end{remark}
\end{comment} 
 
Conversely, given a connected open Riemann surface $X$,
a holomorphic 1-form on $X$ of the form \eqref{eq:theta0}, and a 
holomorphic map $f:X\to Y$ such that the $1$-form $f\theta$ 
is holomorphic and nowhere vanishing on $X$ and 
\begin{equation}\label{eq:periods}
	\Re \int_C f\theta =0\ \ \text{holds for every smooth  closed curve $C$ in $X$},
\end{equation}
we obtain for any $x_0\in X$ and $v\in \R^n$ a conformal minimal immersion $F:X\to \R^n$
given by  
\begin{equation}\label{eq:EW2}
	F(x)=v + \int_{x_0}^x \Re (f\theta) \ \ \text{for all}\ x \in X. 
\end{equation}
Since $f\theta$ is holomorphic, the integral is independent of the path of integration 
in view of the period vanishing conditions \eqref{eq:periods}. 
If $x_0\in X_0$ then \eqref{eq:gv} implies $gv=v$ for all $g\in G_{x_0}$,
which is compatible with Proposition \ref{prop:properties} (c).  
If on the other hand $x_0\in X\setminus X_0$ then there is no restriction
on $v=F(x_0)\in \R^n$. Let us observe the following.

%
% G-equivariance of the integral
%
\begin{lemma}\label{lem:Gequivariance}
(Assumptions as above.) The conformal minimal immersion
$F:X\to \R^n$, defined by \eqref{eq:EW2}, is $G$-equivariant if and only 
if the map $f:X\to Y$ is $G$-equivariant and 
\begin{equation}\label{eq:gv}
	g v = v + \int_{x_0}^{gx_0} \Re (f\theta) \ \ \text{holds for all $g\in G$.}
\end{equation}
\end{lemma}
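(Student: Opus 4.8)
The plan is to establish both implications by the substitution rule for line integrals; the one point requiring care is to reconcile $G$-equivariance of the $Y$-valued map $f$ (where $Y$ is the projective completion \eqref{eq:Y} of the null quadric) with $G$-equivariance of the honest $\C^n$-valued holomorphic $1$-form $\Phi:=f\theta=2\di F$. To this end I would first note that on the dense open set $X\setminus X_0$ the map $f$ takes values in $\nullq_*\subset\C^n$, so there ``$f$ is $G$-equivariant'' simply means $f(gx)=g\,f(x)$; since $X_0$ is discrete and all the maps in sight are continuous, this identity holds on $X\setminus X_0$ if and only if it holds on $X$, so it may be verified on $X\setminus X_0$. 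Combining it with the invariance \eqref{eq:theta} of $\theta$, namely $\theta_{gx}\circ dg_x=\theta_x$, one obtains the pointwise identity
\[
	(g^*\Phi)_x=\Phi_{gx}\circ dg_x=f(gx)\,(\theta_{gx}\circ dg_x)=f(gx)\,\theta_x ,
\]
which equals $g\,\Phi_x=g\,f(x)\,\theta_x$ precisely when $f(gx)=g\,f(x)$. Hence $f$ is $G$-equivariant if and only if $g^*\Phi=g\,\Phi$ for every $g\in G$, where on the right $g$ acts by the given orthogonal --- hence $\C$-linear --- transformation of $\C^n$.

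For the forward implication, assume $f$ is $G$-equivariant and \eqref{eq:gv} holds; fix $g\in G$ and $x\in X$ and choose a path $\gamma$ from $x_0$ to $x$, so that $g\circ\gamma$ runs from $gx_0$ to $gx$. By the substitution rule, the identity $g^*\Phi=g\,\Phi$, and the path-independence guaranteed by \eqref{eq:periods},
\[
	\int_{gx_0}^{gx}\Phi=\int_{g\circ\gamma}\Phi=\int_{\gamma}g^*\Phi=g\int_{\gamma}\Phi=g\int_{x_0}^{x}\Phi .
\]
Taking real parts and using that a real orthogonal matrix commutes with $z\mapsto\Re z$ on $\C^n$ gives $\int_{gx_0}^{gx}\Re(f\theta)=g\int_{x_0}^{x}\Re(f\theta)$. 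Splitting the integral in \eqref{eq:EW2} from $x_0$ to $gx$ at the point $gx_0$ and substituting \eqref{eq:gv} then yields
\[
	F(gx)=v+\int_{x_0}^{gx_0}\Re(f\theta)+\int_{gx_0}^{gx}\Re(f\theta)=gv+g\int_{x_0}^{x}\Re(f\theta)=g\Bigl(v+\int_{x_0}^{x}\Re(f\theta)\Bigr)=gF(x),
\]
so $F$ is $G$-equivariant.

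The converse just runs these steps in reverse. If $F$ is $G$-equivariant, then evaluating \eqref{eq:equivariant} at $x=x_0$ gives $gv=F(gx_0)=v+\int_{x_0}^{gx_0}\Re(f\theta)$, which is \eqref{eq:gv}; and differentiating $F\circ g=gF$, extracting the $(1,0)$-part, and dividing by $\theta$ via \eqref{eq:theta} gives $f(gx)=g\,f(x)$ --- this is the chain of identities \eqref{eq:Fg0}--\eqref{eq:fg} already worked out in the proof of Proposition \ref{prop:properties}, which I would cite rather than repeat. The only genuinely delicate step in the whole argument is the first one: making sure the ``projective'' equivariance of $f:X\to Y$ and the linear equivariance of $\Phi=f\theta$ are interchangeable across the polar divisor $X_0$. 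Everything else is the standard Enneper--Weierstrass change of variables and presents no difficulty.
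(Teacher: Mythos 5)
Your proposal is correct and follows essentially the same route as the paper: the identity $\int_{g\circ\gamma}f\theta=g\int_\gamma f\theta$ via the invariance $\theta_{gx}\circ dg_x=\theta_x$ is exactly the paper's display \eqref{eq:equivarianceofintegral}, the splitting of the integral at $gx_0$ together with \eqref{eq:gv} is the same, and the reverse direction likewise evaluates \eqref{eq:equivariant} at $x_0$ and cites Proposition \ref{prop:properties} for the equivariance of $f$. Your extra care in reconciling the projective equivariance of $f:X\to Y$ with the linear equivariance of the everywhere-holomorphic form $f\theta$ across $X_0$ is a harmless (and slightly more explicit) elaboration of what the paper leaves implicit.
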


\begin{proof} 
Suppose that a map $F:X\to\R^n$ of the form \eqref{eq:EW2} is $G$-equivariant.
The $G$-equivariance condition at the point $x=x_0$ gives
\[
	gv = gF(x_0) = F(gx_0)=v+ \int_{x_0}^{gx_0} \Re(f\theta)
	\quad \text{for all}\ g\in G,
\]
so \eqref{eq:gv} holds. By Proposition \ref{prop:properties} the map $f=2\di F/\theta:X\to Y$ 
is $G$-equivariant as well. 

Conversely, assume that $f:X\to Y$ is a $G$-equivariant holomorphic map. 
Given a piecewise $\Cscr^1$ path $\gamma:[0,1]\to X$, we have in view 
of \eqref{eq:theta} for any $g\in G$ that
\begin{equation}\label{eq:equivarianceofintegral}
	\int_{g\gamma} f\theta =   
		\int_0^1 f(g\gamma(t)) \,\theta_{g\gamma(t)}(dg_{\gamma(t)} \dot\gamma(t)) \,dt 
		= \int_0^1 g f(\gamma(t)) \, \theta_{\gamma(t)} (\dot\gamma(t)) \,dt  
		= g \int_{\gamma} f\theta.
\end{equation} 
If $f$ also satisfies the period vanishing conditions \eqref{eq:periods} 
then the integral of $f\theta$ between a pair of points is independent of the choice
of a path. From \eqref{eq:EW2} and \eqref{eq:gv} we obtain
\begin{eqnarray*}
	F(gx) &=& v+ \int_{x_0}^{gx} \Re (f\theta) 
		 =  v + \int_{x_0}^{gx_0} \Re (f\theta) + \int_{gx_0}^{gx} \Re (f\theta) \\
		 &=& gv + g \int_{x_0}^{x} \Re (f\theta) = gF(x),  
\end{eqnarray*} 
showing that the map $F$ is $G$-equivariant.
\end{proof}

%
%  WEIERSTRASS REPRESENTATION OF G-EQUIVARIANT MINIMAL SURFACES
%
Summarizing, Proposition \ref{prop:properties} and Lemma \ref{lem:Gequivariance}
give the following representation formula. We explain in Section \ref{sec:infinite}
that the same result holds for infinite discrete groups $G$ acting on $X$ 
properly discontinuously provided that $X/G$ is non-compact.

\begin{theorem}[\rm Weierstrass representation of $G$-equivariant minimal surfaces]
\label{th:GWR}
Assume that $X$ and $G$ are as in Theorem \ref{th:main}, $\theta$
is given by \eqref{eq:h}--\eqref{eq:theta0}, and $Y$ is given by \eqref{eq:Y}.
Fix a point $x_0\in X$. Then, 
every $G$-equivariant conformal minimal immersion $F:X\to\R^n$ is of the form
\[ 
	F(x)=F(x_0) + \int_{x_0}^x \Re (f\theta) \quad \text{for $x\in X$},
\] 
where
$
	f=2\di F/\theta: X\to Y=\nullq_*\cup Y_0
$
is a $G$-equivariant holomorphic map satisfying $f^{-1}(Y_0)=X_0$  
such that $f\theta$ has no zeros or poles and satisfies the period conditions 
\begin{eqnarray*}
	%\label{eq:PV}
	\int_C \Re (f\theta) &=& 0\quad\ \text{for every $[C]\in H_1(X,\Z)$, and} \\
	%\label{eq:gv}
	g F(x_0) &=& F(x_0) + \int_{x_0}^{gx_0} \Re (f\theta) \quad 
	\text{for every $g\in G$.}
\end{eqnarray*}
\end{theorem}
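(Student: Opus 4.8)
The plan is to deduce the statement directly by assembling three facts already at hand: the Enneper--Weierstrass representation recalled in Section~\ref{sec:prelim}, Proposition~\ref{prop:properties}, and Lemma~\ref{lem:Gequivariance}. There is no substantive difficulty here; the only point requiring a little care is the passage between period conditions phrased over all closed curves and over a homology basis.

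First I would take an arbitrary $G$-equivariant conformal minimal immersion $F:X\to\R^n$ and fix a base point $x_0\in X$. Applying the Enneper--Weierstrass formula \eqref{eq:EW} with the $1$-form $\theta$ of \eqref{eq:theta0}, one has $F(x)=F(x_0)+\int_{x_0}^{x}\Re(f\theta)$ with $f:=2\di F/\theta$. Since $2\di F=(\di F_1,\dots,\di F_n)$ is a holomorphic $1$-form with values in $\nullq_*=\nullq\setminus\{0\}$, while $\theta$ vanishes precisely on $X_0$, the product $f\theta=2\di F$ is holomorphic and nowhere vanishing on $X$, so $f\theta$ has neither zeros nor poles, and $\Re(f\theta)$ is a closed real $1$-form on $X$. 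Path independence of the integral, i.e.\ $\int_C\Re(f\theta)=0$ for every smooth closed curve $C$ in $X$, is part of \eqref{eq:EW}; since $\Re(f\theta)$ is closed and every class in $H_1(X,\Z)$ is represented by a smooth closed curve, this is equivalent to the first of the two displayed period conditions.

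Next I would invoke Proposition~\ref{prop:properties}: the $G$-equivariance of $F$ forces $f=2\di F/\theta:X\to Y$ to be a holomorphic $G$-equivariant map with $f^{-1}(Y_0)=X_0$, having a pole of order $|G_{x_0}|-1$ at each $x_0\in X_0$, which is exactly what keeps $f\theta$ holomorphic and nonvanishing there. Finally, evaluating the equivariance identity \eqref{eq:equivariant} at $x=x_0$ and substituting the representation formula yields
\[
	gF(x_0)=F(gx_0)=F(x_0)+\int_{x_0}^{gx_0}\Re(f\theta)\qquad\text{for every }g\in G,
\]
which is the second displayed condition; equivalently, this is condition \eqref{eq:gv} of Lemma~\ref{lem:Gequivariance}. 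This exhausts the asserted properties of $f$. I would close by recalling that Lemma~\ref{lem:Gequivariance} also supplies the converse — any $G$-equivariant holomorphic $f:X\to Y$ with $f^{-1}(Y_0)=X_0$, with $f\theta$ holomorphic and nowhere zero, and satisfying the two period conditions, reconstructs via \eqref{eq:EW2} a $G$-equivariant conformal minimal immersion — which is why the formula deserves to be called a representation, although only the stated direction is needed for what follows.
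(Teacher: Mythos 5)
Your proposal is correct and follows the same route as the paper, which obtains Theorem \ref{th:GWR} precisely by combining the Enneper--Weierstrass formula with Proposition \ref{prop:properties} (for the properties of $f=2\di F/\theta$) and Lemma \ref{lem:Gequivariance} (for the period condition at $gx_0$ and the converse direction). Your added remark that the first period condition need only be checked on a homology basis, since $\Re(f\theta)$ is closed, is a harmless and accurate elaboration.
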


%
%
%   TOWARDS THE PROOF OF THE MAIN THEOREM
%
%
\section
{Constructing holomorphic $G$-equivariant maps $X\to Y$}\label{sec:towards}

In this section, we explain the setup and 
outline the proof of Theorem \ref{th:main}; the details
are given in the following two sections.
We shall use the notation from Proposition \ref{prop:properties}. In particular,
the holomorphic $1$-form $\theta$ on $X$ is as in \eqref{eq:theta0} and 
the manifolds $Y_0\subset Y$ are given by \eqref{eq:Y} and \eqref{eq:Y0}.
%In view of Theorem \ref{th:GWR}, it suffices to construct a $G$-equivariant holomorphic map $f:X\to Y$ with $f^{-1}(Y_0) = X_0$ (see the notations \eqref{eq:X0} and \eqref{eq:Y}--\eqref{eq:Y0}) such that $f\theta$ is a nowhere vanishing holomorphic 1-form on $X$ satisfying the period vanishing conditions \eqref{eq:periods} and also \eqref{eq:gv}.
%By Remark \ref{rem:disconnectedX} we may assume that $X$ is connected. 

We begin by defining a $G$-equivariant conformal minimal immersion 
$F_0$ from a neighbourhood of the closed discrete subset
$X_0$ of $X$ (see \eqref{eq:X0}) to $\R^n$. 
Fix a point $x_0\in X_0$ and set $k=|G_{x_0}|>1$, 
where $G_{x_0}$ is the (cyclic) stabiliser group of $x_0$.  
By Proposition \ref{prop:properties} (a), there is a local holomorphic coordinate 
$z$ on a $G_{x_0}$-invariant disc neighbourhood $\Delta\subset X$ of $x_0$, 
with $z(x_0)=0$, such that a generator $g_0$ of $G_{x_0}$ 
is the rotation $g_0 z=\E^{\imath\phi}z$ with $\phi=2\pi/k$. 
By the assumption of Theorem \ref{th:main} there is a $G_{x_0}$-invariant plane 
$0\in\Lambda\subset \R^n$ on which $g_0$ acts as the rotation through the angle 
$\phi$. Let $L=L(\Lambda)$ \eqref{eq:L} be the associated complex line contained
in the null quadric $\nullq\subset \C^n$ \eqref{eq:nullq}. 
Then, $g_0$ acts on $L$ as multiplication by 
$\E^{\imath\phi}$. Choose a nonzero vector $y_0\in L$ and set 
\begin{equation}\label{eq:f0}
	f_0(z) = \frac{y_0}{z^{k-1}} \ \ \text{for all $z\in \Delta$}.
\end{equation}
Thus, $f_0:\Delta\to Y$ is a holomorphic map with 
the point $f_0(x_0)=p(y_0)\in Y_0$ at infinity. Since
$\E^{\imath k \phi}=1$, we have that 
\[
	f_0(g_0z)=f_0(\E^{\imath\phi}z) = \frac{y_0}{\E^{\imath(k-1)\phi} z^{k-1}} 
	= \E^{\imath\phi} \frac{y_0}{\E^{\imath k\phi} z^{k-1}} = g_0 f_0(z),
\]
so $f_0$ is $G_{x_0}$-equivariant. In the coordinate $z$, we have that 
\[
	\theta(z)= d (h(z^k)) =kh'(z^k)z^{k-1}dz, 
\]
where $h:\Delta\to\C$ is a holomorphic function with nonvanishing derivative. 
The 1-form 
\[
	(f_0\theta)(z) = kh'(z^k) y_0 dz
\] 
is holomorphic and nonvanishing on $\Delta$. Since $\Delta$ is simply connected
and condition \eqref{eq:gv} in Lemma \ref{lem:Gequivariance} trivially
holds on $\Delta$ for $v=0$ and the group $G_{x_0}$, $f_0\theta$ integrates
to a flat $G_{x_0}$-equivariant conformal minimal immersion 
$F_0:\Delta\to \Lambda\subset \R^n$. (Alternatively, we can observe that the conformal
linear map $F_0$ from the $z$-coordinate on $\Delta$ to the plane 
$\Lambda$ is $G_{x_0}$-equivariant since the generator of $G_{x_0}$ acts as a 
rotation through the same angle on the domain and
codomain, and take $f_0=2\di F_0/\theta$. This may differ from 
\eqref{eq:f0} by a multiplicative holomorphic factor.)
% We shall see in Lemma \ref{lem:nonflat} that $F_0$ can be perturbed to a nonflat $G_{x_0}$-equivariant conformal minimal immersion.)
We extend $f_0$ and $F_0$ by $G$-equivariance to the neighbourhood 
$G\,\cdotp \Delta = \bigcup_{g\in G} g\Delta\subset X$ of the orbit $Gx_0\subset X$. 
Doing the same at every point of $X_0$ and choosing the neighbourhoods
pairwise disjoint yields a $G$-equivariant 
holomorphic map $f_0:V\to Y$ from a $G$-invariant neighbourhood $V\subset X$ of $X_0$
such that $f_0\theta$ is a nowhere vanishing holomorphic $1$-form on $V$
with values in $\nullq_*$, and a $G$-equivariant conformal minimal immersion 
$F_0:V\to\R^n$ with $2\di F_0=f_0\theta$.  

To prove Theorem \ref{th:main}, we shall 
find a $G$-equivariant holomorphic map $f:X\to Y$ which agrees
with $f_0:V\to Y$ to a given finite order in every point of $X_0$, it satisfies
$f(X_1)\subset \nullq_*$ (where $X_1=X\setminus X_0$), 
and conditions \eqref{eq:periods} and \eqref{eq:gv} hold. By Lemma 
\ref{lem:Gequivariance} (or Theorem \ref{th:GWR}), 
the map $F:X\to\R^n$ given by \eqref{eq:EW2} is then 
a $G$-equivariant conformal minimal immersion.

Let us first explain how to find a $G$-equivariant holomorphic map $f:X\to Y$
which agrees with $f_0$ to a given finite order on $X_0$ and satisfies
$f(X_1)\subset \nullq_*$, ignoring the period vanishing 
conditions \eqref{eq:periods} and \eqref{eq:gv} for the moment. 
This is a special case of 
\cite[Theorem 4.1]{KutzschebauchLarussonSchwarz2021JGEA} 
due to Kutzschebauch, L\'arusson, and Schwarz. For our 
purposes, some additional explanations are necessary. 
Consider the action of $G$ on the product manifold $X\times Y$ by 
\begin{equation}\label{eq:gxy}
	g(x,y)=(gx,gy)\ \ \ \text{for}\ \ x\in X,\ y\in Y,\ g\in G.
\end{equation} 
Since the projection $X\times Y\to X$ is $G$-equivariant, it induces a projection 
\begin{equation}\label{eq:Z}
	\rho:Z=(X\times Y)/G \to X/G
\end{equation}
onto the open Riemann surface $X/G$. Note that $Z$ 
is a reduced complex space, the map $\rho$ is holomorphic, it is branched 
over the closed discrete subset $X_0/G$ of $X/G$, and the restriction 
\[
	\rho: Z_1=\rho^{-1}(X_1/G) \to X_1/G
\] 
is a holomorphic $G$-bundle with fibre $Y$. Since the submanifold
$Y_0 \subset Y$ and its complement $Y\setminus Y_0=\nullq_*$
are both $G$-invariant, we have that 
\[
	Z_1 = \left[ (X_1\times \nullq_*)/G\right] \cup \left[(X_1\times Y_0)/G \right]
\]
where the union is disjoint. The open subset 
\begin{equation}\label{eq:Omega}
	\Omega=(X_1\times \nullq_*)/G  \subset Z
\end{equation}
is without singularities, and its complement 
\[
	Z'=Z\setminus \Omega = 
	\left[ (X_0\times Y)/G  \right] \cup \left[ (X_1\times Y_0)/G  \right] 
\]
is a closed complex subvariety of $Z$ containing the branch locus of $\rho$. 
The restricted projection 
\begin{equation}\label{eq:rho} 
	\rho:\Omega \to X_1/G
\end{equation} 
is a holomorphic $G$-bundle with fibre $\nullq_*$, the punctured null quadric.
To describe the structure of this bundle, fix a point $x_1\in X_1$ and let 
$\tilde x_1=\pi(x_1)\in X_1/G$. A loop 
$\gamma:[0,1]\to X_1/G$ with $\gamma(0)=\gamma(1)=\tilde x_1$ lifts with
respect to the covering projection $\pi:X_1\to X_1/G$ to 
a unique path $\lambda:[0,1]\to X_1$ with $\lambda(0)=x_1$. Since the fibres
of $\pi$ are $G$-orbits of the free action of $G$ on $X_1$, its terminal 
point satisfies $\lambda(1)=gx_1$ for a unique $g=g(\gamma)\in G$,  
which only depends on the homotopy class of $\gamma$ in 
the fundamental group $\pi_1(X_1/G,\tilde x_1)$. Conversely,
every $g\in G$ equals $g(\gamma)$
for some loop $\gamma$ in $X_1/G$ based at $\tilde x_1$, and the identity $1\in G$ 
corresponds to loops in the image of the injective homomorphism 
$\pi_*:\pi_1(X_1,x_1)\mapsto \pi_1(X_1/G,\tilde x_1)$ 
induced by the quotient projection $\pi:X_1\to X_1/G$.
In fact, the correspondence $\gamma\mapsto g(\gamma)$ realises 
an isomorphism $\pi_1(X_1/G,\tilde x_1) / \pi_*(\pi_1(X_1,x_1)) \cong G$.
The monodromy homomorphism of the bundle \eqref{eq:rho} along 
the loop $\gamma$ is then given by the action of $g=g(\gamma)$ 
on the fibre $\nullq_*$ of $\rho$ over the point $\tilde x_1$. 
A point $z_1\in \Omega$ with $\rho(z_1)=\tilde x_1$ is represented by a pair 
$(\tilde x_1,y_1)$ for some $y_1\in \nullq_*$, and the monodromy map detemined
by $\gamma$ identifies it with the point $(\tilde x_1,gy_1)$.

%
% CORRESPONDENCE 
%
\begin{lemma}\label{lem:correspondence}
There is a natural bijective correspondence between (continuous or holomorphic)
sections $\tilde f:X/G \to Z$ of the map $\rho:Z\to X/G$ \eqref{eq:Z},
satisfying 
\[
	\tilde f(X_1/G)\subset \Omega=(X_1\times \nullq_*)/G
	\ \ \text{and}\ \ 
	\tilde f(X_0/G) \subset (X_0\times Y_0)/G, 
\]
and (continuous or holomorphic) $G$-equivariant maps $f:X\to Y$
satisfying 
\[
	f(X_1)\subset \nullq_*\ \ \text{and}\ \ f(X_0)\subset Y_0.
\]
\end{lemma}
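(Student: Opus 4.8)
The plan is to exhibit the correspondence explicitly in both directions and check that it is a well-defined bijection, using the fact that $Z = (X\times Y)/G$ is a quotient by a free action on the $X$-factor away from $X_0$ and the standard dictionary between sections of an associated bundle and equivariant maps. First I would recall the setup: the quotient map $q:X\times Y\to Z$ and the projection $\rho:Z\to X/G$ induced by $(x,y)\mapsto x$, together with the quotient $\pi:X\to X/G$. Given a $G$-equivariant map $f:X\to Y$, the graph map $x\mapsto (x,f(x))$ is $G$-equivariant from $X$ to $X\times Y$ (since $g(x,f(x)) = (gx, gf(x)) = (gx, f(gx))$ by equivariance), hence descends to a map $\tilde f:X/G\to Z$, $\tilde f(\pi(x)) = q(x,f(x))$, and $\rho\circ\tilde f = \mathrm{id}_{X/G}$ because $\rho(q(x,f(x))) = \pi(x)$. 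This $\tilde f$ is continuous (resp.\ holomorphic) whenever $f$ is, since $q$ and $\pi$ are quotient maps and, over $X_1/G$, $\pi$ is a covering so one can test holomorphy locally by lifting.

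Conversely, given a section $\tilde f:X/G\to Z$ of $\rho$, I would lift it to a $G$-equivariant map $f:X\to Y$ as follows. For $x\in X_1$, the point $\tilde f(\pi(x))\in\Omega = (X_1\times \nullq_*)/G$ lies in the free locus, so its preimage under $q$ is a single $G$-orbit $\{(gx', gy'): g\in G\}$ with $\pi(x') = \pi(x)$; since $x\in X_1$ there is a unique $g_0\in G$ with $g_0 x' = x$, and we set $f(x) = g_0 y'$. This is independent of the representative $(x',y')$ chosen in the orbit and is manifestly $G$-equivariant on $X_1$ by construction, it is holomorphic on $X_1$ because $\pi|_{X_1}$ is a local biholomorphism (lift $\tilde f$ locally along a section of $\pi$ and compose with the $G$-action), and it takes values in $\nullq_*$ there because $\tilde f(X_1/G)\subset\Omega$. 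On $X_0$ the map $f$ is then defined by continuity — equivalently, the same recipe works at $x_0\in X_0$ using any $(x_0', y')$ with $q(x_0',y') = \tilde f(\pi(x_0))$ and the (now non-unique) choice of $g_0$, but one must check consistency: if $g_0 x_0' = x_0 = g_1 x_0'$ then $g_1^{-1}g_0\in G_{x_0'}$ fixes $x_0'$, and well-definedness of $f(x_0) := g_0 y'$ requires $g_0 y' = g_1 y'$, i.e.\ that $G_{x_0'}$ fixes $y'$. Here is where the hypothesis $\tilde f(X_0/G)\subset (X_0\times Y_0)/G$ must be used together with an argument that the stabiliser of $x_0'$ in $X\times Y$ at the point $(x_0',y')$ agrees with $G_{x_0'}$: one verifies this from the structure of the action, namely that a point of $(X_0\times Y_0)/G$ is represented by $(x_0', y')$ with $y'$ a fixed point of $G_{x_0'}$ acting on $Y_0$. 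I would check this directly, noting by Proposition \ref{prop:properties}(b),(d) that $G_{x_0'}$ acts on the relevant null line, and on its class in $Y_0$ — the point $p(L)$ — it acts trivially, since multiplication by $\E^{\imath\phi}$ on $L$ induces the identity on $p(L)\in\CP^{n-1}$.

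Finally I would check that the two constructions are mutually inverse, which is immediate from tracing through the definitions: starting from $f$, forming $\tilde f(\pi(x)) = q(x, f(x))$, and then recovering $f$ via the recipe with representative $(x, f(x))$ and $g_0 = 1$ gives back $f$; conversely starting from a section $\tilde f$, the associated $f$ satisfies $q(x,f(x)) = \tilde f(\pi(x))$ by construction, and pushing $f$ back down returns $\tilde f$. The same computations show the correspondence is compatible with the regularity classes (continuous $\leftrightarrow$ continuous, holomorphic $\leftrightarrow$ holomorphic). I expect the main obstacle to be the bookkeeping at the branch points $X_0$: making precise why the hypothesis on $\tilde f(X_0/G)$ (landing in $(X_0\times Y_0)/G$, and not merely in $\rho^{-1}(X_0/G)$) is exactly what forces the lift $f$ to be single-valued and holomorphic across $X_0$ — equivalently, why $Y_0$ is the locus in $Y$ on which a generator of a cyclic stabiliser acts trivially in the induced projective action, matching the fact that the stabiliser acts on the fibre $\nullq_*$ by $\E^{\imath\phi}$-multiplication but fixes the corresponding point of $Y_0\subset\CP^{n-1}$. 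Everything else is the formal equivalence between equivariant maps and sections of the Borel-type construction $(X\times Y)/G\to X/G$, which is routine once the free locus $X_1$ is separated from the ramification.
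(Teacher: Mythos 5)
Your construction of the correspondence over the free locus $X_1$ — the graph section $\tilde f(\pi(x))=[(x,f(x))]$ in one direction and the unique-representative lift in the other — is exactly the paper's argument, and that part is fine. The gap is in how you handle the branch points. Your direct pointwise recipe at $x_0\in X_0$ correctly reduces to the claim that the representative $y'$ is fixed by the stabiliser $G_{x_0'}$, but your justification of that claim is wrong: it is not true that "a point of $(X_0\times Y_0)/G$ is represented by $(x_0',y')$ with $y'$ a fixed point of $G_{x_0'}$", nor that $Y_0$ is "the locus in $Y$ on which a generator of a cyclic stabiliser acts trivially". The stabiliser acts on $Y_0\subset\CP^{n-1}$ by the projectivization of an orthogonal transformation of $\C^n$, which is in general a nontrivial automorphism of $Y_0$; it fixes the distinguished point $p(L)$ of an invariant null line $L$ (that is the computation in Proposition \ref{prop:properties}), but an arbitrary $y'\in Y_0$ appearing in a class of $(X_0\times Y_0)/G$ need not be of that form. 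So the pointwise hypothesis $\tilde f(X_0/G)\subset (X_0\times Y_0)/G$ by itself does not make your recipe single-valued, and your closing diagnosis — that this inclusion is "exactly what forces the lift to be single-valued" — misidentifies the mechanism; that inclusion merely matches the condition $f(X_0)\subset Y_0$ on the other side of the correspondence.

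What actually forces $y'$ to be stabiliser-fixed is the continuity of the section, and this is the route the paper takes: establish the bijection only over $X_1$, then extend $f$ across the closed discrete set $X_0$ by continuity (and by Riemann's removable singularities theorem in the holomorphic case). Concretely, on a small $G_{x_0}$-invariant punctured disc around $x_0$ the lift $f$ is continuous, equivariant, and (by continuity of $\tilde f$ at $\pi(x_0)$) takes values near the finite orbit $G_{x_0}y'$; since the punctured disc is connected, $f$ converges to a single point $h_0y'$ of that orbit, and passing to the limit in $f(gx)=gf(x)$ shows $h_0y'$ is $G_{x_0}$-fixed (whence so is $y'$, as $G_{x_0}$ is cyclic). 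You mention "defined by continuity" in passing as an alternative, but you treat it as equivalent to the pointwise recipe and then build the rest of the argument on the false structural claim about $Y_0$; to repair the proof you should make the continuity argument the actual proof at the branch points and drop the assertion that the stabiliser acts trivially there.
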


\begin{proof} 
Let us first explain this correspondence over the domain $X_1=X\setminus X_0$.
Given a $G$-equivariant map $f:X_1\to \nullq_*$, we define a section 
$\tilde f: X_1/G \to \Omega$ of the map $\rho$ \eqref{eq:rho} by 
$\tilde f(\pi(x)) = [(x,f(x))]\in \Omega$ for $x\in X_1$,
where $[(x,y)]\in \Omega$ denotes the equivalence class of  
$(x,y)\in X_1\times \nullq_*$ under the action \eqref{eq:gxy}. 
Given $x'\in X_1$ with $\pi(x')=\pi(x)$, we have $x'=gx$ for a unique $g\in G$.
From $f(gx)=gf(x)$ we obtain 
\[
	[(x',f(x'))]=[(gx,f(gx))] = [(gx,gf(x))]=[g(x,f(x))]=[(x,f(x))],
\]
so $\tilde f$ is well-defined. 
Conversely, given a section $\tilde f:X_1/G \to \Omega$, we have 
for every $x\in X_1$ that $\tilde f(\pi(x)) =[(x,y)]\in \Omega$ for a unique $y\in \nullq_*$, 
and we define $f(x)=y$. If $[(x',y')]=[(x,y)]\in \Omega$ then $(x',y')=(gx,gy)$ for some 
$g\in G$, which shows that $f(gx)=f(x') = y'=gy=gf(x)$, so the map $f$ is $G$-equivariant.
The conclusion of the lemma now follows 
by continuity and Riemann's removable singularities theorem.
\end{proof}

Since the holomorphic map $f_0:V\to Y$ from a $G$-invariant neighbourhood 
$V\subset X$ of $X_0$, defined above, 
is $G$-equivariant and satisfies $f_0(V\setminus X_0)\subset \nullq_*$
and $f_0(X_0)\subset Y_0$, Lemma \ref{lem:correspondence} shows that 
$f_0$ determines a holomorphic section $\tilde f_0:V/G \to Z$ such that 
\begin{equation}\label{eq:inclusions}
	\tilde f_0((V\setminus X_0)/G) \subset \Omega \ \ \text{and}\ \ 
	\tilde f_0(X_0/G)\subset (X_0\times Y_0)/G.
\end{equation}
Conversely, we have the following lemma.  

%
%  Determining a G-equivariant CMI near X_0
%
\begin{lemma}\label{lem:onV}
Let $V\subset X$ be an open 
$G$-invariant neighbourhood of $X_0$ whose connected components are simply 
connected and each of them contains precisely one point of $X_0$, and let
$\Omega$ be given by \eqref{eq:Omega}.  
Assume that $\tilde f_0:V/G \to Z$ is a holomorphic section of the map 
$\rho:Z\to X/G$ \eqref{eq:Z} over $V/G$ satisfying \eqref{eq:inclusions}, 
and let $f_0:V\to Y$ be the associated $G$-equivariant holomorphic 
map satisfying $f_0(V\setminus X_0)\subset \nullq_*$ and $f_0(X_0)\subset Y_0$
(see Lemma \ref{lem:correspondence}). If the $1$-form $f_0\theta$ has no zeros or poles  
on $V$, then $f_0$ determines a $G$-equivariant conformal minimal immersion 
$F_0:V\to\R^n$ with $2\di F_0=f_0 \theta$. 
\end{lemma}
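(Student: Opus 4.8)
The plan is to integrate the null‑valued holomorphic $1$‑form $\omega:=f_0\theta$ over the (simply connected) components of $V$ and to pin down the integration constants using the $G$‑action. First I would record that $\omega$ is a holomorphic $\C^n$‑valued $1$‑form which, by hypothesis, is nowhere vanishing on all of $V$, and that in any local holomorphic coordinate $z$ it has the form $\omega=\phi(z)\,dz$ with $\phi(z)\in\nullq_*$: away from $X_0$ this is clear because $f_0$ takes values in $\nullq_*$ and $\theta$ does not vanish there, while at a point of $X_0$ it follows by continuity together with the fact that $\omega$ has no zero. Hence $\Re\,\omega$ is a closed $\R^n$‑valued $1$‑form on $V$ (closed because $d\omega=0$ for any holomorphic $1$‑form), so on each simply connected component $\Delta$ of $V$ it admits a primitive $F_0:\Delta\to\R^n$ with $dF_0=\Re\,\omega$, unique up to an additive constant; comparing $(1,0)$‑parts of $dF_0=\di F_0+\overline{\di F_0}$ and $\Re\,\omega=\frac{1}{2}(\omega+\bar\omega)$ gives $\di F_0=\frac{1}{2}\omega$. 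Thus $\di F_0$ is a nowhere‑vanishing holomorphic $1$‑form with values in $\nullq_*$, so $F_0|_\Delta$ is harmonic, an immersion, and satisfies $\sum_i(\di F_{0,i})^2=0$, i.e.\ it is a conformal minimal immersion with $2\di F_0=f_0\theta$ on $\Delta$.

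Next I would choose the constants so that the resulting map is $G$‑equivariant on all of $V$. The key observation is that, since $V$ is $G$‑invariant and each component of $V$ contains exactly one point of $X_0$, the subgroup of $G$ preserving a component $\Delta$ equals the stabiliser $G_{x_0}$ of the unique point $x_0\in X_0\cap\Delta$: an element fixing $\Delta$ must carry $x_0$ to a point of $X_0\cap\Delta=\{x_0\}$, and conversely $G_{x_0}$ preserves $\Delta$. The given $f_0$ is $G_{x_0}$‑equivariant, so I would apply Lemma \ref{lem:Gequivariance} on $\Delta$ with the group $G_{x_0}$, base point $x_0$, and $v=0$; the only condition to verify, namely \eqref{eq:gv}, reads $g\cdot0=0+\int_{x_0}^{gx_0}\Re(f_0\theta)$ for $g\in G_{x_0}$, which holds trivially because $gx_0=x_0$. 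Hence the normalisation $F_0(x_0)=0$ (or any vector in the $G_{x_0}$‑fixed subspace of $\R^n$) makes $F_0:\Delta\to\R^n$ a $G_{x_0}$‑equivariant conformal minimal immersion.

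Finally I would globalise: pick one representative $\Delta_j$ from each $G$‑orbit of components of $V$, define $F_0$ on $\Delta_j$ as above, and extend to the orbit by setting $F_0(x):=gF_0(g^{-1}x)$ for $x\in g\Delta_j$. This is independent of the choice of $g$ because any two choices differ by an element of the stabiliser of $\Delta_j$ (equal, by the above, to the stabiliser of the unique point of $X_0$ in $\Delta_j$), under which $F_0|_{\Delta_j}$ is equivariant; and one checks, using the invariance \eqref{eq:theta} of $\theta$ and the $G$‑equivariance of $f_0$, that $dF_0=\Re(f_0\theta)$ still holds on each component. Taking the union over $j$ yields a $G$‑equivariant conformal minimal immersion $F_0:V\to\R^n$ with $2\di F_0=f_0\theta$, and comparing with $f_0=2\di F_0/\theta$ on $V\setminus X_0$ forces $f_0=2\di F_0/\theta$ everywhere on $V$. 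The analytic input is just the Enneper--Weierstrass construction recalled in Section \ref{sec:prelim}; the only delicate point is the last two steps, where one must recognise that the stabiliser of a component is exactly $G_{x_0}$ and that on such a component Lemma \ref{lem:Gequivariance} imposes no obstruction — the base point $x_0$ being $G_{x_0}$‑fixed — so that the integration constants can genuinely be chosen $G$‑equivariantly.
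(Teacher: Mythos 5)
Your proposal is correct and follows essentially the same route as the paper: integrate $\Re(f_0\theta)$ on each simply connected component from the base point $x_0\in X_0$ with a $G_{x_0}$-fixed constant (e.g.\ $v_0=0$), invoke Lemma \ref{lem:Gequivariance} noting that condition \eqref{eq:gv} is trivial since $gx_0=x_0$, and extend by $G$-equivariance to the remaining components. The extra care you take in identifying the stabiliser of a component with $G_{x_0}$ and in checking well-definedness of the equivariant extension is consistent with, and slightly more detailed than, the paper's argument.
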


\begin{proof}
Fix a point $x_0\in X_0$. The connected component $\Delta$ of $V$ 
containing $x_0$ is simply connected, $G_{x_0}$-invariant, and the restricted map 
$f_0:\Delta\to Y$ is $G_{x_0}$-equivariant. Choose a vector $v_0\in \R^n$
such that $gv_0=v_0$ for all $g\in G_{x_0}$ (this holds e.g.\ for $v_0=0$)
and define a conformal minimal immersion $F_0:\Delta\to\R^n$ by  
$ 
	F_0(x)=v_0 + \int_{x_0}^x \Re (f_0\theta)
$
for all $x\in \Delta$.  The integral is well-defined since $\Delta$ is simply connected, 
and $F_0$ is $G_{x_0}$-equivariant by Lemma \ref{lem:Gequivariance}. 
(Indeed, condition \eqref{eq:gv} trivially holds for all $g \in G_{x_0}$.) 
We extend $F_0$ by the $G$-equivariance condition to the domain  
$G\, \cdotp \Delta=\bigcup_{g\in G}g\Delta\subset X$. 
The proof is completed by performing the same construction on the other 
$G$-orbits of the closed discrete set $X_0\subset X$.  
\end{proof}

We have now arrived at the main point of our argument. 
The punctured null quadric $\nullq_*$ is a homogeneous
manifold of the complex Lie group $O(n,\C)$ 
\cite[p.\ 78]{AlarconForstnericLopez2021},
hence an Oka manifold by a theorem of Grauert \cite{Grauert1957I}
(see also \cite[Proposition 5.6.1]{Forstneric2017E}). 
It is even algebraically elliptic (see \cite[Proposition 1.15.3]{AlarconForstnericLopez2021}). 
In particular, the map $\rho:\Omega\to X_1/G$ in \eqref{eq:rho} 
is a holomorphic fibre bundle with Oka fibre $\nullq_*$.
Let $\tilde f_0 :V/G\to Z$ be a holomorphic section as in Lemma 
\ref{lem:onV}, satisfying conditions \eqref{eq:inclusions}.
Recall that the branch locus of the holomorphic map $\rho:Z\to X/G$ projects to $X_0/G$.
Since the section $\tilde f_0$ is holomorphic on a neighbourhood of $X_0/G$,
the Oka principle for sections of branched holomorphic maps
(see \cite[Theorem 6.14.6]{Forstneric2017E}) shows that every continuous
section $\tilde f'_0:X/G\to Z$ with $\tilde f(X_1/G)\subset \Omega$, which agrees 
with $\tilde f_0$ on a neighbourhood of $X_0/G$, is homotopic 
to a holomorphic section $\tilde f:X/G\to Z$ through a homotopy of 
sections that are holomorphic on a smaller neighbourhood of $X_0/G$, 
they agree with $\tilde f'_0$ to any given finite order at every point of $X_0$, 
and they map $X_1/G$ to $\Omega$. (The cited result is an improved version of 
\cite[Theorem 2.1]{Forstneric2003FM}. The main addition is that we can control
the range of the resulting holomorphic section $\tilde f:X/G\to Z$, ensuring that 
$\tilde f(X_1/G) \subset \Omega$.) The existence of a continuous extension $\tilde f'_0$
of $\tilde f_0$ with these properties follows from the observation 
that the homotopy type of the open Riemann surface $X/G$ is that of 
a bouquet of circles and the fibre $\nullq_*$ of the $G$-bundle \eqref{eq:rho} is connected.
(This is a special case of \cite[Corollary 5.14.2]{Forstneric2017E}.)

To prove Theorem \ref{th:main}, we must explain how to find a holomorphic section 
$\tilde f:X/G\to Z$ as above such that the associated $G$-equivariant holomorphic map 
$f:X\to Y$, given by Lemma \ref{lem:correspondence}, 
integrates to a $G$-equivariant conformal minimal immersion $F:X\to \R^n$ 
as in \eqref{eq:EW2}. This amounts to showing that $f$ can be chosen such that
it satisfies the period conditions \eqref{eq:periods} and \eqref{eq:gv}.
We shall follow \cite[proof of Theorem 3.6.1]{AlarconForstnericLopez2021} 
with appropriate modifications to ensure $G$-equivariance. 
The main lemma is given in the following section 
(see Lemma \ref{lem:main}), and Theorem \ref{th:main} is proved in
Section \ref{sec:proof} as a special case of Theorem \ref{th:mainbis}.

%
%
% SECTION: THE MAIN LEMMA
%
%
\section{The main lemma}\label{sec:mainlemma}
The main result of this section is Lemma \ref{lem:main}, which provides the key
step in the proof of Theorems \ref{th:main} and \ref{th:mainbis}. 
This lemma is a $G$-equivariant analogue
of \cite[Proposition 3.3.2]{AlarconForstnericLopez2021}.

We begin by adjusting the relevant technical tools from 
\cite[Chapter 3]{AlarconForstnericLopez2021} to $G$-equivariant minimal surfaces. 
Recall the following notion; see \cite[Definition 1.12.9]{AlarconForstnericLopez2021}.

%
%   ADMISSIBLE SETS IN A RIEMANN SURFACES
%
\begin{definition}\label{def:admissible}
An {\em admissible set} in a Riemann surface $X$ is 
a compact set of the form $S=K\cup E$, where $K$ is a 
(possibly empty) finite union of pairwise disjoint compact domains with piecewise 
$\Cscr^1$ boundaries in $X$ and $E = S \setminus\mathring  K$ is a union of finitely many 
pairwise disjoint, smoothly embedded Jordan arcs and closed Jordan curves meeting $K$ 
only at their endpoints (if at all) and such that their intersections with the boundary 
$bK$ of $K$ are transverse. 
\end{definition}

Note that $\mathring S=\mathring K$. Since an admissible set $S$ has at most finitely 
many holes in $X$, the Bishop--Mergelyan theorem 
(see Bishop \cite{Bishop1958PJM} and 
\cite[Theorem 5]{FornaessForstnericWold2020}) shows that 
every function in the algebra $\Ascr(S)=\Cscr(S)\cap \Oscr(\mathring S)$ 
is a uniform limit of meromorphic functions on $X$ with poles in $X\setminus S$, and of 
holomorphic functions on $X$ if $X\setminus S$ has no holes (i.e., 
$X$ is an open Riemann surface and $S$ is Runge in $X$). 
Furthermore, functions of class $\Ascr^r(S)$ for any $r\in\N$ can be approximated
in the $\Cscr^r(S)$ topology by meromorphic or holomorphic functions on $X$,
respectively (see \cite[Theorem 16]{FornaessForstnericWold2020}).

We assume in the sequel that $X$ is an open Riemann surface, not necessarily
connected, and $G$ is a finite group acting on $X$ and on $\R^n$ 
as in Theorem \ref{th:main} such that the action of $G$ is transitive on the
set of connected components of $X$ and the stabiliser of any component 
$X'$ of $X$ acts effectively on $X'$ 
(see Remarks \ref{rem:disconnectedX} and \ref{rem:disconnectedX2}).  
These conditions imply that the set $X_0$ \eqref{eq:X0}, which is the union of 
fixed point sets of all elements of $G$, is a closed, discrete, $G$-invariant subset of $X$, 
which is the branch locus of the holomorphic quotient projection 
\[
	\pi:X\to \wt X:=X/G,
\] 
and the orbit space $\wt X$ is a connected Riemann surface.
Set $\wt X_0=X_0/G$.
If $S$ is a $G$-invariant admissible subset of $X$ such that 
$X_0\cap S\subset \mathring S$, then $\wt S=\pi(S)$ is an admissible subset of $\wt X$.
Conversely, given an admissible subset $\wt S\subset \wt X$ such that 
$\wt X_0\cap \wt S$ is contained in the interior of ${\wt S}$, its preimage $S=\pi^{-1}(\wt S)$ 
is a $G$-invariant admissible subset of $X$ such that $X_0\cap S\subset\mathring S$. 

%
%  Runge property of saturated sets
%
\begin{lemma}\label{lem:Runge}
Let $\pi:X\to \wt X=X/G$ be as above. 
A compact set $\wt S\subset \wt X$ is Runge in $\wt X$ if and only if
its preimage $S=\pi^{-1}(\wt S) \subset X$ is Runge in $X$. 
\end{lemma}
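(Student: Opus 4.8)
The statement is that $\wt S \subset \wt X = X/G$ is Runge (i.e., $\Oscr(\wt X)$-convex, equivalently $\wt X \setminus \wt S$ has no relatively compact connected components) if and only if its preimage $S = \pi^{-1}(\wt S)$ is Runge in $X$. The plan is to work directly with the topological characterization of Runge compacts in open Riemann surfaces: a compact set $K$ in an open Riemann surface $W$ is Runge if and only if $W \setminus K$ has no connected component with compact closure in $W$ (no ``holes''). So the task reduces to a purely topological statement about the branched covering $\pi \colon X \to \wt X$, which is proper with finite fibres.

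First I would record the relevant structural facts about $\pi$: it is a finite, proper, surjective holomorphic map, it is a genuine covering of degree $|G|$ over $\wt X_1 = \wt X \setminus \wt X_0$, it is branched exactly over the discrete set $\wt X_0 = X_0/G$, and crucially it is an \emph{open} and \emph{closed} map (properness plus the fact that $X_0$ is discrete and $\pi$ is locally $z \mapsto z^k$ near branch points). Openness and properness of $\pi$ give, for any subset $T \subset \wt X$, the identities $\pi^{-1}(\wt X \setminus T) = X \setminus \pi^{-1}(T)$ (trivial) and, for the complements, that $\pi$ restricts to a proper map $X \setminus S \to \wt X \setminus \wt S$ which is surjective and open. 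I would then argue the equivalence of ``having a hole'' in both directions.

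For the direction ($S$ Runge $\Rightarrow \wt S$ Runge), suppose $\wt X \setminus \wt S$ has a relatively compact component $\wt U$; then $\pi^{-1}(\wt U)$ is an open subset of $X \setminus S$ that is relatively compact in $X$ (by properness of $\pi$: preimage of the compact set $\overline{\wt U}$ is compact), hence at least one connected component of it is a relatively compact component of $X \setminus S$, contradicting that $S$ is Runge. For the converse ($\wt S$ Runge $\Rightarrow S$ Runge), suppose $X \setminus S$ has a relatively compact component $U$. Its image $\pi(U)$ is a connected subset of $\wt X \setminus \wt S$; since $\pi$ is open, $\pi(U)$ is open, and since $\overline{U}$ is compact, $\pi(\overline{U}) \supset \overline{\pi(U)}$ is compact, so $\pi(U)$ lies in some relatively compact component $\wt U$ of $\wt X \setminus \wt S$ — wait, one must check $\pi(U)$ is \emph{closed} in $\wt X \setminus \wt S$, not just in $\wt X$; this needs that $U$ is a full connected component of $X\setminus S$ together with the covering-space ``unique path lifting'' behaviour away from $X_0$, or more robustly: $\pi(U)$ closed in $\wt X\setminus\wt S$ follows because $\pi|_{X\setminus S}\colon X\setminus S\to\wt X\setminus\wt S$ is a proper (hence closed) map and $U$ is closed in $X\setminus S$. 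Thus $\pi(U)$ is clopen in $\wt X \setminus \wt S$, i.e. a union of components, all relatively compact in $\wt X$, contradicting Runge-ness of $\wt S$.

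The only genuinely delicate point — and the one I would expect to be the main obstacle — is verifying that $\pi|_{X\setminus S} \colon X \setminus S \to \wt X \setminus \wt S$ is proper and hence a closed map, since $X \setminus S$ and $\wt X \setminus \wt S$ are not compact and properness of $\pi\colon X\to\wt X$ does not immediately restrict to arbitrary open subsets; one needs that $S = \pi^{-1}(\wt S)$ is \emph{saturated} under the $G$-action (equivalently a union of fibres of $\pi$), which is exactly the hypothesis, so that $\pi^{-1}(\wt X\setminus\wt S) = X\setminus S$ and properness transfers. I would state and prove this small properness fact carefully at the outset, after which both implications are routine complement-chasing as above. (If one prefers to avoid the general covering-space terminology near $X_0$, all the needed facts — $X_0$ discrete, $\pi$ finite proper open — are already collected in Section \ref{sec:prelim} of the paper, so I would simply cite those.)
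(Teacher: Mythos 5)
Your argument is correct, but it takes a genuinely different route from the paper. You reduce the statement to the purely topological characterisation of Runge compacts in an open Riemann surface ($K$ is $\Oscr$-convex iff $W\setminus K$ has no relatively compact connected components, i.e.\ no holes) and then chase components of the complements using that $\pi$ is a proper, open, closed, finite map and that $S=\pi^{-1}(\wt S)$ is saturated; your two component-chasing steps, including the properness of the restricted map $\pi|_{X\setminus S}\colon X\setminus S\to\wt X\setminus\wt S$ and the resulting clopen-ness of $\pi(U)$ in $\wt X\setminus\wt S$, are all sound. The paper instead works directly with the definition of $\Oscr$-convexity by exhibiting separating holomorphic functions: in the easy direction it pulls back a peak function $\tilde h\in\Oscr(\wt X)$ to $h=\tilde h\circ\pi\in\Oscr(X)$, and in the harder direction it takes $h_0\in\Oscr(X)$ peaking on the fibre over $\tilde p$ and averages it over $G$, $h(x)=\frac1m\sum_{g\in G}h_0(gx)$, to obtain a $G$-invariant function that descends to $\wt X$. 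Your approach buys generality and economy of hypotheses --- it uses nothing about the group beyond finiteness of the fibres, so it applies verbatim to any finite proper branched covering --- at the cost of invoking the Behnke--Stein-type equivalence between $\Oscr$-convexity and the absence of holes. The paper's approach stays entirely inside the convexity framework and showcases the $G$-averaging device that recurs in equivariant constructions; either proof is acceptable here.
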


\begin{proof}
Recall that a compact set $S$ in an open Riemann surface $X$ is Runge if
and only if it is $\Oscr(X)$-convex.
Assume that $\wt S$ is Runge in $\wt X$. Given a point 
$p\in X\setminus S$, we have that $\tilde p=\pi(p)\in \wt X\setminus \wt S$, 
and hence there is a holomorphic function $\tilde h\in\Oscr(\wt X)$ satisfying
$\tilde h(\tilde p)=1>\sup_{\tilde x\in \wt S} |\tilde h(\tilde x)|$. 
The function $h=\tilde h\circ \pi \in\Oscr(X)$ then satisfies
$h(p)=1>\sup_{x\in S} |h(x)|$, showing that $S$ is Runge in $X$.
Conversely, assume that $S$ is Runge in $X$. Pick a point
$\tilde p\in  \wt X\setminus \wt S$ and let 
$\pi^{-1}(\tilde p)=\{p_1,\ldots,p_m\} \subset X\setminus S$, where the
fibre points $p_i$ are listed according to their multiplicities. % (so $m=|G|$).
Since $S$ is Runge in $X$, there exists $h_0\in\Oscr(X)$
satisfying $\sup_{x\in S}|h_0(x)|<1$ and $h_0(p_i)=1$ for $i=1,\ldots, m$.
The function $h\in\Oscr(X)$ defined by $h(x)=\frac{1}{m}\sum_{g\in G} h_0(gx)$
is then $G$-invariant and satisfies $h(p_i)=1>\sup_{x\in S}|h(x)|$
for $i=1,\ldots, m$. It follows that $h=\tilde h\circ\pi$ where $\tilde h\in\Oscr(\wt X)$ 
satisfies $\tilde h(\tilde p)=1>\sup_{\tilde x\in \wt S} |\tilde h(\tilde x)|$.
Hence, $\wt S$ is Runge in $\wt X$. 
\end{proof}

The following is a version of \cite[Definition 3.1.2]{AlarconForstnericLopez2021},
allowing the 1-form $\theta$ to have zeros.

%
%   GENERALIZED CONFORMAL MINIMAL IMMERSIONS ON ADMISSIBLE SETS
%
%   Definition 3.1.2 in 2021 book
%
\begin{definition} \label{def:GCMI}
Let $S=K\cup E$ be an admissible set in a Riemann surface $X$ 
(see Definition \ref{def:admissible}), let $\theta$ be a holomorphic $1$-form 
on a neighbourhood of $S$ in $X$ without zeros on $bS$,
and let $Y$ be the manifold \eqref{eq:Y}. 
A {\em generalized conformal minimal immersion} $S\to\R^n$ $(n\ge 3)$ of class 
$\Cscr^r$ $(r\in\N)$ is a pair $(F,f\theta)$, where $F: S\to \R^n$ is a $\Cscr^r$ map 
whose restriction to $\mathring S$ is a conformal minimal immersion 
and the map $f\in \Ascr^{r-1}(S,Y)$ satisfies the following conditions:
\begin{enumerate}[\rm (a)]
\item 
$f\theta =2\di F$ holds on $K$ (in particular, the zeros of $\theta$ 
cancel the poles of $f$), and
\item 
for any smooth path $\alpha:[0,1]\to X$ parameterizing a connected component of
$E=\overline{S\setminus K}$ we have that 
$\Re(\alpha^*(f\theta))=\alpha^* dF = d(F\circ \alpha)$.
(See Remark \ref{rem:GCMI}.) 
\end{enumerate}
\end{definition}

Given an admissible set $S\subset X$ and integers $r\ge 1,\ n\ge 3$, we denote by 
$
	\GCMI^r(S,\R^n)
$ 
the space of all generalized conformal minimal immersions $S\to\R^n$ of class $\Cscr^r$.  
An element $(F,f\theta) \in \GCMI^r(S,\R^n)$ is said to be {\em nonflat} if the 
image by $F$ of any connected component of $K$ and of $E$ is not contained
in an affine 2-plane in $\R^n$. This holds if and only if the image of any such component
by $f$ is not contained in a ray of $\nullq$, compactified with the point at infinity. 
The identity principle shows that if $F$ is nonflat on a connected 
domain $D$ in a Riemann surface $X$ then its restriction to every arc in 
$D$ is nonflat; conversely, if $F$ is nonflat on a nontrivial arc
then it is nonflat on every connected domain containing this arc.

If $G$ is a finite group as in Theorem \ref{th:main}
and $S$ is $G$-invariant, then $(F,f\theta) \in \GCMI^r(S,\R^n)$ 
is said to be {\em $G$-equivariant} if $F(gx)=gF(x)$ holds for all $x\in S$ and $g\in G$. 
In this case, the map $f:S\to Y$ is also $G$-equivariant (see the 
proof of Proposition \ref{prop:properties}). We denote by 
\[
	\GCMI^r_G(S,\R^n)
\] 
the space of $G$-equivariant generalized conformal minimal immersions 
$S\to\R^n$ of class $\Cscr^r$.  

%
%  A REMARK ON GCMI
%
\begin{remark}\label{rem:GCMI}
Let $S=K\cup E$ be an admissible set and $(F,f\theta) \in \GCMI^r(S,\R^n)$.
Since $F$ is a conformal minimal immersion on $\mathring K=\mathring S$,
we have that $dF=\Re(2\di F)$ on $K$, and hence 
condition (b) in Definition \ref{def:GCMI} is compatible with condition (a) 
asking that $f\theta=2\di F$ hold on $K$. A map $f:S\to Y$ of class $\Ascr^{r-1}(S)$ 
determines a generalized conformal minimal immersion $(F,f\theta)\in \GCMI^r(S,\R^n)$ 
if and only if $\Re \int_\lambda f\theta=0$ on every closed piecewise 
$\Cscr^1$ path $\lambda \subset S$, and it suffices to verify this condition
on a basis of the homology group $H_1(S,\Z)$. (This is a free abelian group of finite
rank. We refer to \cite[Lemma 1.12.10]{AlarconForstnericLopez2021} for 
the construction of a homology basis with suitable properties that will be used
in the sequel.) In particular, if $F:K\to \R^n$ is a conformal
minimal immersion satisfying condition (a) and $E$ is an oriented arc attached with 
both endpoints $p,q$ to $K$, then $f|_E$ must satisfy the condition 
$\Re \int_E f\theta = F(q)-F(p)$. 
\end{remark}

The following lemma provides the key ingredient in the proof of Theorem \ref{th:main}. 

%
%  THE MAIN LEMMA
%
\begin{lemma}\label{lem:main}
Let $X$ be an open Riemann surface, $G$ be a finite group as in 
Theorem \ref{th:main} acting transitively on the set of connected components of $X$, 
$X_0$ be the set \eqref{eq:X0}, 
$\theta$ be a holomorphic 1-form on $X$ as in \eqref{eq:theta} with $\{\theta=0\}=X_0$, 
and let $\pi:X\to X/G=\wt X$ denote the quotient projection.
Assume that $S$ is $G$-invariant admissible set in $X$ such that $X_0\subset \mathring S$
and the admissible set $\wt S=\pi(S)\subset \wt X$ is a strong deformation retract of 
$\wt X$. Then, every nonflat $G$-equivariant generalized conformal minimal immersion 
$(F_0,f_0\theta)\in \GCMI^r_G(S,\R^n)$ $(r\in \N)$ 
can be approximated in the $\Cscr^r$ topology on $S$ by nonflat $G$-equivariant 
conformal minimal immersions $F:X\to\R^n$. Furthermore, $F$ can be chosen 
to agree with $F_0$ in any given finite set of points $A\subset K$, 
and to agree with $F_0$ to any given finite order in the points of $A\cap \mathring K$.
\end{lemma}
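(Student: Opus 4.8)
The plan is to mimic the proof of \cite[Proposition 3.3.2]{AlarconForstnericLopez2021} in the $G$-equivariant setting, exploiting the dictionary set up in Section \ref{sec:towards}: $G$-equivariant objects on $X$ correspond to sections over the orbit Riemann surface $\wt X=X/G$ of the ramified holomorphic map $\rho:Z\to\wt X$ of \eqref{eq:Z}, with the fibre $\nullq_*$ being Oka. First I would pass to $\wt X$ and $\wt S=\pi(S)$, noting that by Lemma \ref{lem:Runge} the set $S$ is Runge in $X$ iff $\wt S$ is Runge in $\wt X$, and that $\wt S$ being a strong deformation retract of $\wt X$ (together with $X_0\subset\mathring S$) means the topology of the problem is concentrated on $\wt S$. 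The $G$-equivariant generalized conformal minimal immersion $(F_0,f_0\theta)$ descends to $\wt S$ as a section $\tilde f_0$ of $\rho$ with $\tilde f_0((\wt S\setminus\wt X_0))\subset\Omega$ and $\tilde f_0(\wt X_0)\subset(X_0\times Y_0)/G$, together with the associated primitive (the descended $F_0$), and the periods to be killed are the integrals of $\Re(f_0\theta)$ over a homology basis of $H_1(\wt S,\Z)$ plus the finitely many ``translation'' relations \eqref{eq:gv} coming from the arcs connecting $x_0$ to $gx_0$ — but since $X_0\subset\mathring S$ and $S$ is connected enough, these are already encoded in the generalized immersion on $S$.

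The core construction proceeds by the standard period-dominating spray technique adapted to sections of $\rho$ over $\wt X$. I would choose a homology basis $\{\delta_1,\dots,\delta_l\}$ of $H_1(\wt S,\Z)\cong H_1(\wt X,\Z)$ supported in $\wt S$ as in \cite[Lemma 1.12.10]{AlarconForstnericLopez2021}, and at finitely many points along these curves insert a dominating family of ``multiplier'' deformations of $f_0$ of the form $f_t = f_0 \cdot h(\zeta,t)$ (acting fibrewise in the $\C^*$-direction of $p:\nullq_*\to Y_0$, hence staying in the null quadric), arranged so that the period map $t\mapsto\bigl(\Re\int_{\delta_j}f_t\theta\bigr)_{j=1}^l$ together with the finite-jet evaluations at $A$ is a submersion at $t=0$; the existence of such sprays is the $\nullq_*$-specific input, and here I would use that $\nullq_*$ is (algebraically) elliptic so that period-dominating sprays exist exactly as in \cite[Lemma 3.2.1 / Section 3.3]{AlarconForstnericLopez2021}, carried over to the bundle $\Omega\to\wt X_1/G$ whose fibre is $\nullq_*$. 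Importantly all of this is done downstairs on $\wt X$, so no equivariance needs to be checked during the deformation; equivariance is automatic upon lifting via Lemma \ref{lem:correspondence}.

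Next I would apply the Oka principle for sections of the branched holomorphic map $\rho:Z\to\wt X$ (\cite[Theorem 6.14.6]{Forstneric2017E}, the improved form of \cite[Theorem 2.1]{Forstneric2003FM}) to approximate the section $\tilde f_0$, extended from $\wt S$ to all of $\wt X$ as a continuous section into $Z$ mapping $\wt X_1/G$ into $\Omega$ (which exists since $\wt X$ retracts onto the bouquet-like $\wt S$ and $\nullq_*$ is connected, cf.\ \cite[Corollary 5.14.2]{Forstneric2017E}), by holomorphic sections $\tilde f:\wt X\to Z$ that are uniformly close to $\tilde f_0$ on $\wt S$ in the $\Cscr^r$ sense, agree with $\tilde f_0$ to the prescribed finite order at the points of $A$ and at $\wt X_0$, and map $\wt X_1/G$ into $\Omega$. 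Applying the Oka principle not to a single section but to the whole dominating spray $(\tilde f_t)$ and then invoking the implicit function theorem lets me correct the small perturbation of periods introduced by the approximation, producing a holomorphic section $\tilde f$ whose lift $f:X\to Y$ is $G$-equivariant, holomorphic, satisfies $f(X_1)\subset\nullq_*$, $f^{-1}(Y_0)=X_0$, has $f\theta$ without zeros or poles, and satisfies the period conditions \eqref{eq:periods} and \eqref{eq:gv} exactly. By Lemma \ref{lem:Gequivariance} this integrates to a $G$-equivariant conformal minimal immersion $F:X\to\R^n$, $\Cscr^r$-close to $F_0$ on $S$, interpolating $F_0$ at $A$ to the prescribed order. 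Finally I would arrange nonflatness: since $(F_0,f_0\theta)$ is assumed nonflat, the identity principle keeps $F$ nonflat after a small enough perturbation (the dominating-spray parameters can be chosen to preserve nonflatness on each component), which also yields that $F(X)$ is not contained in any affine hyperplane after a further generic perturbation among the spray directions.

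The main obstacle I expect is the period-domination step in the equivariant/branched setting: I must produce a spray of sections of $\rho$ over all of $\wt X$ — not merely over $\wt S$ — that both dominates the period map for the homology basis of $H_1(\wt X,\Z)$ and the finite jets at $A$, while respecting the constraint that sections map $\wt X_1/G$ into $\Omega$ and are holomorphic near $\wt X_0$ with the correct singular behaviour there. The delicate point is that the deformations must act in the $\C^*$-direction of the fibre bundle $\nullq_*\to Y_0$ (to preserve the null condition) and must be extendable across the branch locus $\wt X_0$; this is handled by localizing the spray parameters to small discs around interior points of the homology curves (away from $\wt X_0$) and using ellipticity of $\nullq_*$ to realize the required fibrewise flows, exactly as in the non-equivariant proof but now reading ``holomorphic map $X\to\nullq_*$'' as ``holomorphic section of $\Omega\to\wt X_1/G$.'' Once this local-to-global spray is in hand, the remainder is a routine combination of the Oka principle for branched maps and the implicit function theorem.
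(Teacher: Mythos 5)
Your overall strategy coincides with the paper's: descend to sections of the branched map $\rho:Z\to\wt X$ of \eqref{eq:Z}, build a period-dominating spray, invoke the Oka principle for sections of branched maps, and correct the periods with the implicit function theorem. There are, however, two genuine gaps. First, your period map is taken over a homology basis of $H_1(\wt S,\Z)$ \emph{downstairs}, and you dismiss the translation conditions \eqref{eq:gv} as ``already encoded'' in $(F_0,f_0\theta)$. This does not work: the $1$-form $f\theta$ is not $G$-invariant but transforms by $\int_{g\gamma}f\theta=g\int_\gamma f\theta$ (see \eqref{eq:equivarianceofintegral}), so it does not descend to a form on $\wt X$ and its periods cannot be computed downstairs; moreover $H_1(X,\Z)$ is in general larger than $H_1(\wt X,\Z)$, and the conditions \eqref{eq:gv} hold for $f_0$ but must be \emph{preserved by the approximant} $f$, which requires controlling integrals over arcs joining $x_0$ to $gx_0$, not over closed curves. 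The correct device is to fix a spine $\wt C=\bigcup_i \wt C_i$ of $\wt S$, lift it to the $G$-invariant graph $C=\pi^{-1}(\wt C)$ (a strong deformation retract of $X$), and take as period map the $\C^n$-valued integrals $\Pcal_i(f)=\int_{C_{i,1}}f\theta$ over one lifted arc per $G$-orbit of arcs; equivariance propagates the control to all lifts $C_{i,j}$, and matching $\Pcal(f)=\Pcal(f_0)$ then yields both \eqref{eq:periods} and \eqref{eq:gv}. Relatedly, your dominating deformations ``in the $\C^*$-direction of $p:\nullq_*\to Y_0$'' cannot dominate the $\C^n$-valued period map: they move $\int f\theta$ only inside the linear span of $f_0$ along the arc, which may be a proper subspace of $\C^n$ even for nonflat $f_0$. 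One needs the flows of the $O(n,\C)$-fields together with the radial field on $\nullq_*$, as in \cite[Lemma 3.2.1]{AlarconForstnericLopez2021}, applied upstairs on $C_{i,1}$ and then extended by equivariance.

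Second, you pass directly from a dominating spray supported on the one-dimensional spine to the Oka principle for branched sections, but that theorem requires a spray of sections of class $\Ascr^{r-1}$ on all of $\wt S\times B^l$, holomorphic near the branch locus $\wt X_0$ and holomorphic in the parameter. Producing such a spray from one defined only on $\wt C$ is the technical heart of the proof and is absent from your argument: the paper obtains it by taking a Stein neighbourhood of $\tilde f_0(\wt S)$ in $Z$ (Poletsky \cite{Poletsky2013}), constructing over it a fibre-dominating holomorphic spray that is trivial over the branch locus (\cite[Proposition 2.2]{Forstneric2003FM}), factoring the curve-supported spray through it by the implicit function theorem, and Mergelyan-approximating the resulting coefficient functions on $\wt C$ by holomorphic functions on $\wt X$ vanishing to high order at $\wt A$. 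Once these two points are repaired, the rest of your outline (continuous extension, Oka principle, implicit function theorem, integration via Lemma \ref{lem:Gequivariance}, and openness of nonflatness) matches the paper.
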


\begin{proof} 
We shall follow the construction in \cite[Sections 3.2--3.3]{AlarconForstnericLopez2021},
adjusting it to the $G$-equivariant case by using the approach described in Section
\ref{sec:towards}. 

Write $S=K\cup E$ as in Definition \ref{def:admissible}. Since $S$ is $G$-invariant,
both $K$ and $E$ are $G$-invariant.
The conditions imply that the set $X_0$ is finite and the orbit space $\wt X=X/G$ is connected. Since the admissible set $\wt S=\pi(S)\subset \wt X$ is a 
strong deformation retract of $\wt X$, it is connected as well. 
Note that $\wt S$ contains the finite set $\wt X_0=X_0/G$ in its interior. 
Since $\pi:X\setminus X_0 \to \wt X\setminus \wt X_0$ is an unramified covering
projection, it follows that $S$ is a strong deformation retract of $X$. We enlarge
the set $A\subset S$ in the lemma so that it contains $X_0$ 
and the endpoints of all connected components of $E$, and set 
$\wt A=\pi(A)\subset \wt S$. By \cite[Lemma 1.12.10]{AlarconForstnericLopez2021} 
(or \cite[Lemma 3.1]{Forstneric2022APDE})
and \cite[proof of Proposition 3.3.2, p.\ 142]{AlarconForstnericLopez2021} 
there is a finite collection of smooth embedded compact arcs 
$\wt \Cscr=\{\wt C_1,\ldots,\wt C_l\}$ in $\wt S$ 
(diffeomorphic images of $[0,1]\subset\R$) with the following properties.
\begin{enumerate}[\rm (i)]
\item The intersection of any two distinct arcs in $\wt \Cscr$ is either empty
or a common endpoint of both arcs.
\item 
Every point of $\wt A$ is an endpoint of an arc in $\wt \Cscr$. 
\item Every point $\tilde x_0\in \wt X_0$ is an endpoint of a single arc $\wt C_i\in \wt \Cscr$, and the other endpoint of $\wt C_i$ does not belong to $\wt X_0$. 
\item The compact set $\wt C=\bigcup_{i=1}^l \wt C_i$ is 
a strong deformation retract of $\wt S$ (and hence of $\wt X$). In particular, 
$\wt C$ is connected and contains a homology basis of $\wt S$ (and hence of $\wt X$). 
\end{enumerate}
% Franc: added the next sentence.
Note that (i) and (ii) imply that no point of $\wt A$ is an interior point of an arc in $\wt \Cscr$.
We now enlarge the finite sets $\wt A$ and $A$ if necessary to also arrange that 
\begin{enumerate}[\rm (v)]
\item The set of all endpoints of the arcs in $\wt \Cscr$ equals $\wt A$, and 
$A=\pi^{-1}(\wt A)$.
\end{enumerate}
For each $i=1,\ldots,l$ the preimage $C_i=\pi^{-1}(\wt C_i)\subset S$ is the union 
of arcs $C_{i,1},\ldots,C_{i,m}$, where $m=|G|$ is the degree of the projection 
$\pi:X\to\wt X$. If an arc $\wt C_i\in \wt \Cscr$ does not contain any point of $\wt X_0$ 
then $\pi$ is a trivial covering projection over $\wt C_i$, and hence the arcs $C_{i,j}$ for $j=1,\ldots,m$ are pairwise disjoint. In the opposite case, one of the endpoints
$\tilde x_0$ of $\wt C_i$ belongs to $\wt X_0$, and then several arcs $C_{i,j}\subset C_i$ 
share an endpoint % $x_0\in X_0$ with $\pi(x_0)=\tilde x_0$ 
but are otherwise disjoint. In fact, if $k=|G_{x_0}|>1$ is the order of the stabiliser group of 
$x_0\in X_0$ and $\tilde x_0 = \pi(x_0)$, then $x_0$ is the common endpoint of precisely 
$k$ arcs $C_{i,j}\subset C_i$. The set 
\begin{equation}\label{eq:C}  
	C=\pi^{-1}(\wt C)=\bigcup_{i=1}^l C_i
\end{equation} 
is $G$-invariant, Runge in $X$ by Lemma \ref{lem:Runge}, and it contains the set $X_0$ 
of branch points of $\pi$. Since $\wt C$ is a strong deformation
retract of $\wt X$, $\wt X_0\subset \wt C$ and $\pi$ is unbranched 
over $\wt X\setminus \wt X_0$, it follows that $C$ is a strong deformation retract of $X$. 
% In particular, $C$ contains a homology basis of $X$. 
Our assumptions imply that none of the arcs $f_0(C_{i,j})$ is contained 
in a compactified ray of the null quadric $\nullq_*$.
Hence, \cite[Lemma 3.2.1]{AlarconForstnericLopez2021} yields for every $i=1,\ldots,l$ a 
$\Cscr^{r-1}$ map $h_i:C_{i,1}\times B\to Y=\nullq_*\cup Y_0$ (see \eqref{eq:Y}),
where $B\subset \C^n$ is a ball centred at the origin, with the following properties. 
\begin{enumerate}[\rm (a)]
\item $h_i(x,0)=f_0(x)$ for every $x\in C_{i,1}$.
\item There is a closed arc $I_i$ contained in the relative interior of $C_{i,1}$ such that 
$h_i(x,t)=f_0(x)$ for every $x\in C_{i,1}\setminus I_i$ and $t\in B$.
\item $h_i(x,t)\in \nullq_*$ for every $x\in C_{i,1}\setminus X_0$ and $t\in B$.
\item The map $h_i(x,\cdotp):B\to Y$ is holomorphic for every $x\in C_{i,1}$.
\item The following $\C$-linear map is an isomorphism:
\begin{equation}\label{eq:period-i}
	\frac{\di}{\di t}\Big|_{t=0} \int_{C_{i,1}} h_i(\cdotp,t)\theta : \C^n\to \C^n. 
\end{equation}
\end{enumerate}
In the language of \cite[Chapter 3]{AlarconForstnericLopez2021}, $h_i$ is
a period dominating spray of maps $C_{i,1} \to Y$ with the core $f_0$. 
We now extend $h_i$ by $G$-equivariance to a spray $C_i\times B\to Y$ 
over $C_i=\bigcup_{j=1}^m C_{i,j}$. Recall that 
$C$ is given by \eqref{eq:C}. By condition (b) above, $h_i$ extends 
to a spray $h_i:C \times B\to Y$ which is independent 
of $t\in B$ on $C\setminus C_i$, so it equals $f_0$ there. 
Let $B^l$ denote the Cartesian product of $l$ copies of $B$.
Finally, we combine the sprays $h_1,\ldots, h_l$ into a single  
spray $h:C \times B^l \to Y$, which is $G$-equivariant 
with respect to the first variable $x\in C$, such that, writing the parameter variable
as $t=(t^1,\ldots,t^l)\in B^l$ with $t^i =(t_{i,1},\ldots,t_{i,n})\in B$, we have 
\begin{equation}\label{eq:spray-h}
	h(x,t^1,\ldots,t^l)=h_i(x,t^i)\ \ \text{for all $x\in C_i$ and $i=1,\ldots,l$}.
\end{equation}

To the collection of arcs $\Cscr=\{C_{i,j}\}$ defined above and to any 
continuous $G$-equivariant map $f:C\to Y$ such that $f\theta$ assumes values in 
the punctured null quadric $\nullq_*$ we associate the period vector 
$\Pcal(f) = \big(\Pcal_1(f),\ldots, \Pcal_l(f)\big) \in  (\C^n)^l$
whose $i$-th component equals 
\begin{equation}\label{eq:periodmap}
	\Pcal_i(f) = \int_{C_{i,1}} f\theta \in \C^n\ \ \text{for}\ i=1,\ldots,l.
\end{equation}
Our construction clearly implies that
\[ 
	\frac{\di}{\di t}\Big|_{t=0} \Pcal(h(\cdotp,t)) : (\C^n)^l \to (\C^n)^l
	\ \ \text{is an isomorphism}.
\] 
Indeed, the above linear map has a block structure whose
$i$-th diagonal $n\times n$ block equals the map in \eqref{eq:period-i} while the 
off-diagonal blocks vanish. 

In the next step, we approximate $h$ by a spray of maps 
$H:S\times B^l\to Y$ of class $\Ascr^{r-1}(S\times B^l)$ 
(where the ball $B\subset \C^n$ is allowed to shrink a little) 
such that $H(\cdotp,0)=f_0$, $H(\cdotp,t)$ agrees with 
$f_0$ in every point of $A$ and to any given finite order in points of
$A\cap \mathring K$ (in particular, in points of the set
$X_0\subset \mathring K$), $H$ is
$G$-equivariant in $x\in S$ for any fixed $t\in B^l$, and 
\begin{equation}\label{eq:Hdominating}
	\frac{\di}{\di t}\Big|_{t=0} \Pcal(H(\cdotp,t)) : (\C^n)^l \to (\C^n)^l
	\ \ \text{is an isomorphism}.
\end{equation}
In other words, $H$ is a $G$-equivariant $\Pcal$-period dominating spray on $S$ with 
values in $Y$ and with the core $f_0$. To find such $H$, we proceed as follows. 

By the discussion in Section \ref{sec:towards}, we can view the spray $h$ in
\eqref{eq:spray-h} as a spray of sections $\tilde h: \wt C\times B^l \to Z$ 
of the map $\rho:Z=(X\times Y)/G \to X/G=\wt X$ in \eqref{eq:Z}, whose core
$\tilde h(\cdotp,0):\wt C\to Z$ is the section $\tilde f_0$ restricted to $\wt C$. 
Note that $\tilde h$ is holomorphic in the parameter $t\in B^l$, and 
$\tilde h(\tilde x,t)=\tilde f_0(\tilde x)$ holds for all $t\in B^l$ 
when $\tilde x\in \wt C$ is sufficiently near $\wt A=\pi(A)$. 

By Poletsky \cite[Theorem 3.1]{Poletsky2013} (see also 
\cite[Theorem 32 and Corollary 7]{FornaessForstnericWold2020}), the image
of the section $\tilde f_0:\wt S\to Z$ has an open Stein neighbourhood
$Z_0\subset Z$. (Poletsky's result is stated for sections of holomorphic submersions,
but in the case at hand, the branch points of $\rho:Z\to \wt X$
lie in the interior of $\wt S$ and a minor modification of his proof applies.
For Stein subvarieties without boundaries, the existence of open Stein neighbourhoods
was proved by Siu \cite{Siu1976}.)
By \cite[Proposition 2.2]{Forstneric2003FM} the projection $\rho:Z\to X/G=\wt X$ 
admits a holomorphic fibre-spray over the Stein domain $Z_0\subset Z$,  
which is fibre dominating outside the branch locus of $\rho$, 
that is, on $Z_0\setminus \rho^{-1}(\wt X_0)$, 
and which is trivial over $\rho^{-1}(\wt X_0)$. By restricting this spray 
to $\tilde f_0(\wt S)\subset Z_0$ we obtain a fibre-spray of sections 
$s:\wt S\times W\to Z$, where $0\in W\subset \C^k$
is a neighbourhood of the origin in some complex Euclidean space, 
such that $s(\cdotp,0)=\tilde f_0$, 
$\rho(s(\tilde x,\zeta))=\tilde x$ for all $\tilde x\in \wt S$ and $\zeta\in W$, 
and $s$ is fibre dominating over $\wt S\setminus \wt X_0$.
The fibre domination property of $s$ and the implicit function theorem
imply that we can factor the spray $\tilde h$, constructed above, through the spray $s$.
Explicitly, shrinking the ball $B\subset \C^n$ in the domain of $\tilde h$ if necessary, 
there is a map $\zeta:\wt C\times B^l\to W$ of class $\Ascr^{r-1}$ such that
\[
	\tilde h(\tilde x,t) = s\bigl(\tilde x, \zeta(\tilde x,t)\bigl) \ \ 
		\text{and}\ \ \zeta(\tilde x,0)=0\ \ \text{hold for all} \ \ 
		\tilde x\in \wt C\ \ \text{and} \ \ t\in B^l.
\]
Note that $\zeta$ can be chosen such that 
$\zeta(\tilde x,t)$ vanishes when the point $\tilde x\in \tilde S$ 
is sufficiently close to a point of $\wt A=\pi(A)$. This is because the
spray $\tilde h$ is supported (i.e., not identically equal to its core $\tilde f_0$)
on a union of closed arcs in $\wt C$ disjoint from $\wt A$, 
and over this set we can find a (necessarily) trivial complex vector 
subbundle of $\wt C\times \C^k$ 
which is mapped by the differential $ds$ isomorphically onto the vertical (with respect to 
the map $\rho:Z\to \wt X$) tangent bundle of $Z$, that is, the kernel of $d\rho$.
Furthermore, by the construction we have that $\tilde h(\tilde x,t)=\tilde f_0(\tilde x)$ 
for every point $\tilde x\in \tilde S$ that is sufficiently close to a point 
of $\wt A$ and for every $t\in B^l$.

Consider the Taylor series expansion of $\zeta$ in the $t$-variable:
\[
	\zeta(\tilde x,t)=\sum t_{i,j} \zeta_{i,j}(\tilde x) + O(|t|^2), 
\]
where the summation is over $i=1,\ldots,l$ and $j=1,\ldots,n$ and the 
coefficient functions $\zeta_{i,j}$ are of class $\Cscr^{r-1}(\wt C)$. 
Since $\wt C$ is a piecewise smooth curve which is Runge in $\wt X$, 
Mergelyan's theorem \cite[Theorem 16]{FornaessForstnericWold2020} 
allows us to approximate the functions $\zeta_{i,j}$ in the $\Cscr^{r-1}(\wt C)$ topology 
by holomorphic functions $\tilde \zeta_{i,j}$ on $\wt X$ that vanish 
to any given finite order in the points of $\wt A$.
Consider the map $\wt H:\wt S\times B^l\to Z$ of class $\Ascr^{r-1}$ defined by
\begin{equation}\label{eq:wtH}
	\wt H(\tilde x,t) = s\left(\tilde x, \sum t_{i,j} \tilde \zeta_{i,j}(\tilde x)\right)
	\ \ \text{for $\tilde x\in \wt S$ and $t\in B^l$}.
\end{equation}
Note that $\wt H(\cdotp,0)=\tilde f_0$, and the partial differential $\di_t \wt H|_{t=0}$
is close to $\di_t \tilde h|_{t=0}$ on $\wt C$. Assuming that the 
approximation is close enough, the map $H:S\times B^l\to Y$ determined by $\wt H$
(see Lemma \ref{lem:correspondence}) is a $G$-equivariant spray of class $\Ascr^{r-1}$
with the core $f_0$ which is $\Pcal$-period dominating, 
i.e., \eqref{eq:Hdominating} holds. Indeed, the period domination condition
only depends on the $t$-derivative of the spray at $t=0$ and is stable under deformations.

We can now complete the proof of Lemma \ref{lem:main}. 
Recall that $X_1=X\setminus X_0$, see \eqref{eq:X1}.
Let $\Omega=(X_1\times \nullq_*)/G \subset Z$ be the domain in \eqref{eq:Omega}.
Its complement $Z'=Z\setminus \Omega$ is a closed complex subvariety of $Z$ containing the branch locus of $\rho$, and the restricted projection 
$\rho:\Omega \to \wt X_1:=X_1/G=\wt X\setminus \wt X_0$ is a holomorphic $G$-bundle 
whose fibre $\nullq_*$ is an Oka manifold. Note that the range of the spray 
$\wt H$ in \eqref{eq:wtH} lies in $\Omega$ except over the points $\tilde x\in \wt X_0$,
and these points are contained in the interior of $\wt S$. Hence, shrinking 
the ball $B\subset \C^n$ slightly, we can apply the Oka principle
for sections of branched holomorphic maps
\cite[Theorem 2.1]{Forstneric2003FM} (see also the improved version in 
\cite[Theorem 6.14.6]{Forstneric2017E} which exactly fits our situation) 
to approximate $\wt H$ on $\wt S\times B^l$ by a holomorphic 
spray of sections $\wt \Theta:\wt X\times B^l \to Z$ which agrees with $\wt H$ 
to a given finite order in the points $\tilde x\in \wt A$ and maps 
$\wt X_1 \times B^l$ to $\Omega$. (Recall that $\wt X_1=\wt X\setminus \wt X_0$.
Although \cite[Theorem 6.14.6]{Forstneric2017E} is stated for a single section,
its proof applies to sprays of sections. Alternatively, one can treat the 
parameter variable $t$ as a space variable, suitably adjusting the spaces under consideration.)

Let $\Theta:X\times B^l \to Y$ be the $G$-equivariant holomorphic spray associated 
to $\wt \Theta$ by Lemma \ref{lem:correspondence}. 
Assuming that the approximation of $\wt H$ by $\wt \Theta$
is close enough, the period domination property of $H$ and the 
implicit function theorem yield a parameter value $t_0\in B^l$ close to the origin
such that the $G$-equivariant map $f=\Theta(\cdotp,t_0):X\to Y$ approximates
$f_0$ on $S$ and satisfies $\Pcal(f) = \Pcal(f_0)$; see \eqref{eq:periodmap}.
From the last condition and $G$-equivariance it follows that $f$ has the 
same periods as $f_0$ on the arcs 
$C_{i,j}\subset C$ for $i=1,\ldots,l$ and $j=1,\ldots,m$.  
Furthermore, the construction implies that $f\theta$ is a holomorphic $1$-form on $X$
with values in $\nullq_*$. 

Since the $G$-equivariant set $C$ is a strong deformation retract of $X$,
it contains a homology basis of $X$ and also curves 
which can be used to verify conditions \eqref{eq:gv} 
in Lemma \ref{lem:Gequivariance}. More precisely,
each of these curves can be chosen to be a union of arcs in the family
$\Cscr=\{C_{i,j}\}$. Since the periods of $f\theta$ and $f_0\theta$ 
agree on every arc $C_{i,j}\in \Cscr$ and $f_0$ comes from a generalized 
$G$-equivariant conformal minimal immersion $(F_0,f_0\theta)$, 
it follows that $f\theta$ integrates to a $G$-equivariant conformal 
minimal immersion $F:X\to\R^n$ satisfying Lemma \ref{lem:main}. 
If $X$ is disconnected and the conditions in Remark \ref{rem:disconnectedX}
hold, we apply integration on a connected component of $X$ and 
extend it to all of $X$ by $G$-equivariance.
\end{proof}

In the proof of Theorem \ref{th:main} we shall also need the following lemma
on approximating flat $G$-equivariant conformal minimal immersions
by nonflat ones.

%
%  NONFLAT CMI's
%
\begin{lemma}\label{lem:nonflat}
Let $G=\langle g\rangle$ be a cyclic group of order $k$ whose generator $g$ acts on $\C$  
by the rotation $gz=\E^{\imath 2\pi/k}z$, and it acts on $\R^n$ $(n\ge 3)$ 
by an orthogonal transformation. Let $U \subset \C$ be a disc centred at 
the origin. Given a flat $G$-equivariant conformal minimal immersion $F_0:U\to \R^n$ 
and an integer $r\in\N$, there is a nonflat $G$-equivariant conformal minimal immersion 
$F:\C\to\R^n$ which agrees with $F_0$ to order $r$ in $z=0$ and it approximates
$F_0$ as closely as desired on a given compact subset of $U$.
\end{lemma}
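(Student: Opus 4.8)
The plan is to pass to Weierstrass data and perturb the flat immersion into the full null quadric while keeping $G$-equivariance, the admissible perturbation directions being dictated by the $g$-representation on the orthogonal complement of the tangent plane. Since $g$ fixes $0\in\C$ and acts there by $z\mapsto\omega z$ with $\omega=\E^{\imath 2\pi/k}$, $G$-equivariance together with Proposition~\ref{prop:properties}(b) (see also Remark~\ref{rem:necessary}) shows that $\Lambda:=dF_0(T_0\C)\subset\R^n$ is a $g$-invariant $2$-plane on which $g$ is the rotation through $2\pi/k$, and flatness of $F_0$ forces $F_0(U)\subset F_0(0)+\Lambda$. Fixing a positively oriented orthonormal basis $(u_1,u_2)$ of $\Lambda$, the vector $w_0:=u_1-\imath u_2$ spans the null line $L(\Lambda)\subset\nullq$ (see \eqref{eq:L}) and satisfies $gw_0=\omega w_0$. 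Writing $2\di F_0=\varphi_0\,dz$ we have $\varphi_0:U\to L(\Lambda)\setminus\{0\}$, and since the scalar coefficient is invariant under $z\mapsto\omega z$ it is a function of $z^k$: $\varphi_0(z)=A(z^k)\,w_0$ with $A$ holomorphic and nowhere zero on a disc.

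Thus the task reduces to constructing an \emph{entire} map $\varphi:\C\to\nullq_*$ which is not proportional to a fixed vector, satisfies $\omega\varphi(\omega z)=g\varphi(z)$, agrees with $\varphi_0$ to order $\ge r$ at $0$, and is uniformly close to $\varphi_0$ on a prescribed compact subset of $U$; then $F(z):=F_0(0)+\int_0^z\Re(\varphi\,dw)$ is a conformal minimal immersion of $\C$ (the period condition being void on the disc), it is nonflat (its image is not contained in a ray of $\nullq$), it is $G$-equivariant — checked exactly as in the proof of Lemma~\ref{lem:Gequivariance}, using $gF_0(0)=F_0(0)$ — and it matches $F_0$ as required. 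I would first replace $A$ by an entire zero-free approximation: write $A=\E^B$ on its disc and set $\tilde A=\E^{\tilde B}$ with $\tilde B$ a Taylor polynomial of $B$ of large degree, so that $\tilde A$ is entire, nowhere zero, matches $A$ to high order at $0$, and converges to $A$ on compacta. Next pick a nonzero $g$-eigenvector $\tilde v\in\C\otimes\Lambda^\perp$, say $g\tilde v=\omega^{b}\tilde v$; because $\Lambda\perp\Lambda^\perp$, the $\C$-bilinear form $\langle z,w\rangle=\sum z_iw_i$ satisfies $\langle w_0,\tilde v\rangle=0$, and $\tilde v\notin\C w_0$.

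There are two cases. If $2b\not\equiv 0\pmod k$, then $\langle\tilde v,\tilde v\rangle=0$ (as $g$ preserves $\langle\cdot,\cdot\rangle$ and $\omega^{2b}\ne1$), so for large $N\equiv b-1\pmod k$ and small $\epsilon\ne0$ the map
\[
	\varphi(z)=\tilde A(z^k)\,w_0+\epsilon z^{N}\,\tilde v
\]
lies in $\nullq_*$ (isotropic and mutually orthogonal summands; nowhere zero since $\tilde A$ is zero-free and $w_0,\tilde v$ are linearly independent), satisfies $\omega\varphi(\omega z)=g\varphi(z)$, and is manifestly not proportional to a fixed vector. If instead $2b\equiv0\pmod k$, i.e.\ $\omega^{b}=\pm1$, I may take $\tilde v=v$ real with $gv=\eta v$, $\eta\in\{1,-1\}$; extending $(u_1,u_2)$ by $v$ to an orthonormal triple, the Enneper-type curve $\gamma(t)=(1-t^2)u_1-\imath(1+t^2)u_2+2t\,v$ lies in $\nullq_*$ for every $t\in\C$ with $\gamma(0)=w_0$, and a short computation gives $g\gamma(t)=\omega\,\gamma(\eta\omega^{-1}t)$. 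Hence
\[
	\varphi(z)=\tilde A(z^k)\,\gamma(\epsilon z^{N})
\]
satisfies $\omega\varphi(\omega z)=g\varphi(z)$ as soon as $\omega^{N+1}=\eta$ — solvable for infinitely many $N$, since $\eta=-1$ forces $k$ even — and it is nowhere zero ($\tilde A$ zero-free, $\gamma$ never vanishing) and not proportional to a fixed vector. In both cases, choosing $N\ge r$ large and $\epsilon$ small yields the jet matching at $0$ and the uniform closeness on the given compact set, and integrating $\Re(\varphi\,dz)$ from $0$ with initial value $F_0(0)$ produces $F$.

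I expect the only genuine difficulty to be the second case: when $\Lambda^\perp$ carries a $g$-fixed or $g$-anti-fixed real direction $v$ — which always happens when $n=3$ — that vector is not isotropic, so the naive linear perturbation $\varphi_0+\epsilon z^N v$ would leave the null quadric, and one is forced to bend along the quadratic curve $\gamma(t)$ and to track how $g$ acts on the auxiliary parameter $t$, which is precisely what produces the congruence $\omega^{N+1}=\eta$. The subsidiary point that $A$ must first be made \emph{zero-free} (so that the product $\tilde A(z^k)\gamma(\epsilon z^N)$ never vanishes on all of $\C$) is handled by the exponential trick. The remaining verifications — nullity of $\varphi$, the equivariance identity, nonflatness, and the approximation and interpolation estimates obtained by letting $N\to\infty$ and $\epsilon\to0$ — are routine.
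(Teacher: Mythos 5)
Your proof is correct, but it takes a genuinely different route from the paper. The paper's proof of Lemma \ref{lem:nonflat} stays inside the general machinery: it passes to the section $\tilde h_0$ of the branched map $\rho:Z=(\C\times Y)/G\to\C$ over the disc $\pi(U)$, attaches the segment $[1,2^k]$, prescribes a value $f_0(2)\in\nullq_*\setminus\C y_0$ at the far endpoint of one lifted arc to force nonflatness, and then invokes the Oka principle for sections of ramified maps (Theorem 6.14.6 of the cited monograph) to produce the global section with jet interpolation at $0$. You instead build the Weierstrass data by hand: you observe that flatness and equivariance force $2\di F_0=A(z^k)w_0\,dz$ with $gw_0=\E^{\imath 2\pi/k}w_0$ (this is exactly Proposition \ref{prop:properties}(b)), make $A$ entire and zero-free by exponentiating a Taylor polynomial of $\log A$, and then perturb inside $\nullq_*$ along a $g$-eigenvector of the normal representation --- linearly when the eigenvector is isotropic, and along the Enneper curve $\gamma(t)=w_0-t^2\bar w_0+2tv$ when the eigenvalue is $\pm1$ (the only case where the eigenvector fails to be null), with the congruences $N\equiv b-1\pmod k$, resp.\ $\E^{\imath 2\pi(N+1)/k}=\eta$, playing the role of the monodromy bookkeeping; your observation that $\eta=-1$ forces $k$ even is what makes the latter congruence solvable. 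I checked the key identities ($\langle\gamma(t),\gamma(t)\rangle=0$, $\gamma(t)\neq0$ for all $t\in\C$, and $g\gamma(t)=\omega\gamma(\eta\omega^{-1}t)$) and they hold; the integration step is legitimate because $\C$ is simply connected, the $1$-form $\varphi\,dz$ transforms equivariantly, and $gF_0(0)=F_0(0)$ by Proposition \ref{prop:properties}(c). What your approach buys is a completely explicit, self-contained construction avoiding the Oka principle and the auxiliary space $Z$ altogether; what the paper's approach buys is uniformity (no case analysis on the normal representation) and the fact that the same argument pattern recurs verbatim in Lemma \ref{lem:main}. One small presentational point: the dichotomy should be stated as ``either some eigenvalue of $g$ on $\C\otimes\Lambda^\perp$ is different from $\pm1$, or all of them equal $\pm1$'', rather than as a property of an arbitrarily chosen eigenvector --- as written both branches do work, so this is cosmetic.
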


\begin{proof}
Assume that $F_0$ is defined on an open disc $U \subset \C$ centred at 
the origin and containing the closed unit disc $\overline \Delta=\{z\in \C:|z|\le 1\}$. 
Set $\theta=z^{k-1}dz$ and write $2\di F_0=h_0\theta$,
where $h_0:U\to Y$ is a $G$-equivariant holomorphic map 
such that $h_0(U\setminus \{0\})\subset \C^* y_0$ for some $y_0\in\nullq_*$.
Let $\pi:\C\to \C/G=\C$ denote the quotient projection $\pi(z)=z^k$.
Then, $\wt U=\pi(U)$ is a disc in $\C$ containing $\overline \Delta$.
By Lemma \ref{lem:correspondence}, $h_0$ determines a holomorphic section 
$\tilde h_0:\wt U \to Z$ of the holomorphic map $\rho:Z=(\C\times Y)/G \to \C$
(see \eqref{eq:Z}) over $\wt U$. Note that $\rho$ is branched only over the origin.
We attach to $\overline \Delta$ the segment $\wt C=[1,2^k]\subset \R\subset\C$.
Note that $\wt S =\overline\Delta\cup\wt C$ is an admissible set.
The preimage $C=\pi^{-1}(\wt C)=\bigcup_{i=1}^k C_i$ is a union of
$k$ pairwise disjoint arcs $C_i$ attached to $\overline \Delta$. 
One of these arcs, say $C_1$, is the segment $[1,2]\subset\R$ and the other
arcs are obtained by rotating $C_1$ through integer multiples of the angle $2\pi/k$.
We define a smooth $G$-equivariant map
$
	f_0 : S=\overline \Delta\cup C \to Y
$
such that $f_0=h_0$ on $\overline\Delta$, $f_0(C)\subset \nullq_*$,
and $f_0(2)\in \nullq_* \setminus \C y_0$. 
Let $\tilde f_0:\wt S=\overline\Delta\cup\wt C \to Z$ denote
the section of $\rho:Z\to\C$ over $\wt S$ associated to $f_0$ 
by Lemma \ref{lem:correspondence}. 
By \cite[Theorem 6.14.6]{Forstneric2017E} there is 
a holomorphic section $\tilde f:\C \to Z$ 
which approximates $\tilde f_0$ on $\wt S$, it 
agrees with $\tilde f_0$ to a given finite order in the origin $0\in\C$, 
it agrees with $\tilde f_0$ in the point $2^k$, and it satisfies
$\tilde f(\C^*) \subset (\C^* \times \nullq_*)/G$.
The section $\tilde f$ defines a $G$-equivariant holomorphic map $f:\C \to Y$ satisfying 
$f(\C^*)\subset \nullq_*$ which agrees with $h_0$ to the given 
order $r$ in the origin and satisfies $f(2)=f_0(2)\in \nullq\setminus \C y_0$. 
Hence, $f$ is nonflat on the arc $C_1$, and therefore on $\C$. By integrating $f\theta$ 
we obtain a nonflat $G$-equivariant conformal minimal immersion $F:\C \to\R^n$
which agrees with $F_0$ to order $r$ in $z=0$ and approximates $F_0$
on $\overline\Delta$.
\end{proof}

%
%
%   PROOF OF THE MAIN THEOREM
%
%
\section{Proof of Theorem \ref{th:main}}\label{sec:proof}

In this section, we prove the following result.

\begin{theorem}\label{th:mainbis}
Assume that $G$ is a finite group acting effectively on a connected open 
Riemann surface $X$ by holomorphic automorphisms and acting on 
$\R^n$ $(n\ge 3)$ by orthogonal transformations. Let $\pi:X\to X/G$ be the quotient
projection and $X_0\subset X$ be the set \eqref{eq:X0} of its branch points. 
Assume that $S\subset X$ is a $G$-invariant admissible subset 
(see Definition \ref{def:admissible}) which is Runge in $X$,  
$A\subset X\setminus bS$ is a closed $G$-invariant discrete set containing $X_0$, 
$V\subset X\setminus S$ is an open $G$-invariant neighbourhood of $A\setminus S$, 
and $F_0:S\cup V\to \R^n$ is such that $F_0|_S\in \GCMI^r_G(S)$ $(r\ge 1)$ is a nonflat
$G$-equivariant generalized conformal minimal immersion, 
and $F_0|_V$ is a $G$-equivariant conformal minimal immersion. 
Then, $F_0$ can be approximated as closely as desired in the $\Cscr^r$ topology on 
$S$ by $G$-equivariant conformal minimal immersions $F:X\to\R^n$ which agree with $F_0$
to a given finite order $k(a)\in\N$ in every point $a\in A$.
\end{theorem}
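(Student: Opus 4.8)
The plan is to derive Theorem~\ref{th:mainbis} from Lemma~\ref{lem:main} by a recursion over a $G$-invariant exhaustion of $X$, preceded by a Mergelyan-type reduction, following the scheme of \cite[proof of Theorem 3.6.1]{AlarconForstnericLopez2021} but with every construction performed $G$-equivariantly; equivalently, one works downstairs on the quotient Riemann surface $\wt X=X/G$, using the correspondence of Lemma~\ref{lem:correspondence} between $G$-equivariant maps into $Y$ and sections of $\rho:Z\to\wt X$ (see \eqref{eq:Z}). For the preparatory step I would first shrink $V$ so that its components are small, $G$-invariantly chosen discs, one around each point of $A\setminus S$; on a component centred at a point $a\in X_0$ the immersion $F_0|_V$ is flat (it maps into the $2$-plane $\Lambda_a$ given by the hypothesis of Theorem~\ref{th:main}), so I apply Lemma~\ref{lem:nonflat} to the cyclic stabiliser $G_a$ to replace it by a nonflat $G$-equivariant conformal minimal immersion agreeing with $F_0$ to order $k(a)$ at $a$, while on the other components the same is achieved by a small generic perturbation fixing the prescribed jet. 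I then attach to $S$ a $G$-orbit of short smooth arcs joining it to these discs and extend $(F_0,f_0\theta)$ across them to a nonflat element of $\GCMI^r_G$ — this amounts only to the linear period conditions of Remark~\ref{rem:GCMI} along the arcs, which moreover force vanishing of the real period along every newly created closed loop — and absorb the discs into the interior. After this, the data is a single nonflat $(F_0,f_0\theta)\in\GCMI^r_G(S,\R^n)$ on a $G$-invariant admissible Runge set with $A\cap S\subset\mathring S$, in particular $X_0\cap S\subset\mathring S$, and it suffices to prove the conclusion for this enlarged data.

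Next I choose an exhaustion $\wt X=\bigcup_{j\ge0}\wt M_j$ by compact admissible Runge sets with $\pi(S)$ contained in the interior of $\wt M_0$, with every point of $\wt X_0\cup\pi(A)$ eventually interior, and with each inclusion $\wt M_j\hookrightarrow\wt M_{j+1}$ either a strong deformation retract (the \emph{noncritical} case) or raising the first Betti number by exactly one (the \emph{critical} case), the branch points and the points of $\pi(A)$ being crossed only at noncritical steps; put $M_j=\pi^{-1}(\wt M_j)$, a $G$-invariant compact admissible Runge set, so that $S\subset M_0$ and $\bigcup_jM_j=X$. Starting from $S_0=S$, I construct recursively nonflat $G$-equivariant conformal minimal immersions $F_j$ on $G$-invariant admissible Runge neighbourhoods $S_j\supset M_j$ with $X_0\cap S_j\subset\mathring{S_j}$, each $\Cscr^r$-close to the preceding one on $M_{j-1}$ and agreeing with it to order $k(a)$ at every $a\in A\cap M_j$. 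At a noncritical step, if $M_{j+1}\setminus M_j$ contains a $G$-orbit of fixed points, I first insert on a small disc about it the explicit flat local model \eqref{eq:f0} (this is precisely where the $2$-plane hypothesis of Theorem~\ref{th:main} enters), make it nonflat by Lemma~\ref{lem:nonflat}, and join it to $S_j$ by a $G$-orbit of arcs as above; enlarging $S_j$ so that its quotient is a strong deformation retract of an open neighbourhood $N$ of $M_{j+1}$ with $X_0\cap N$ still interior, I apply Lemma~\ref{lem:main} with $N$ in place of $X$ to obtain $F_{j+1}$. At a critical step I first attach to $S_j$ a $G$-orbit of arcs whose quotient carries the new homology class, extend $(F_j,f_j\theta)$ across them as a generalized conformal minimal immersion (Remark~\ref{rem:GCMI}), and then invoke Lemma~\ref{lem:main} as before; recall that Lemma~\ref{lem:main} simultaneously controls the vanishing of the real periods \eqref{eq:periods} over closed loops of $X$ and the relations \eqref{eq:gv}, because in its proof the Runge set it uses deformation-retracts onto all of $X$ and hence contains both a homology basis and curves testing \eqref{eq:gv}.

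Prescribing the $\Cscr^r$ approximation error at step $j$ to be smaller than $\epsilon_j$ with $\sum_j\epsilon_j<\epsilon$ and $\epsilon_j\to0$ fast, the holomorphic $1$-forms $f_j\theta$ (with values in $\nullq_*$) converge uniformly on compacts, hence in every $\Cscr^m$ by the Cauchy estimates, to a nowhere vanishing holomorphic $1$-form $f\theta$ with values in $\nullq_*$; integrating as in \eqref{eq:EW2} yields a $G$-equivariant conformal minimal immersion $F:X\to\R^n$ with $2\di F=f\theta$. Past the stage at which a point $a\in A$ becomes interior, its $k(a)$-jet is fixed exactly by the interpolation clause of Lemma~\ref{lem:main}, so $F$ agrees with $F_0$ to order $k(a)$ at $a$; and $\|F-F_0\|_{\Cscr^r(S)}\le\sum_j\epsilon_j<\epsilon$. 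If $X$ is disconnected under the hypotheses of Remark~\ref{rem:disconnectedX}, I run the argument on one connected component and propagate the result by $G$-equivariance.

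The analytic substance — the equivariant Oka principle for sections of the branched map $\rho:Z\to\wt X$ together with the joint control of the two families of period conditions — is entirely contained in Lemma~\ref{lem:main}, so the main obstacle is organizational: one must arrange the exhaustion so that at \emph{every} stage the finitely many not-yet-absorbed fixed points of $X$ are handled in a way that leaves the input of Lemma~\ref{lem:main} with $X_0\subset\mathring S$, which forces the insertion of the explicit local models \eqref{eq:f0} near each newly crossed $G$-orbit of fixed points and their correct interleaving with the topology-increasing steps and with the exact jet interpolation at the points of $A$.
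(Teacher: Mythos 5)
Your scheme is essentially the paper's proof: reduce via Lemmas \ref{lem:main} and \ref{lem:nonflat} to a nonflat $G$-equivariant datum, exhaust the quotient $\wt X=X/G$ by compact Runge sets so that each step is either noncritical or adds one homology class, attach $G$-orbits of arcs with the period conditions of Remark \ref{rem:GCMI} (your observation that these conditions telescope to kill the real periods of the newly created loops is exactly the point of \eqref{eq:period2}), apply Lemma \ref{lem:main} at each stage, and pass to the limit. Two steps, however, would fail as written.

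First, your preparatory step joins $S$ by arcs to a disc around \emph{every} point of $A\setminus S$ and absorbs them all at once, so that afterwards ``$A\cap S\subset\mathring S$'' for the enlarged $S$. The set $A$ is only assumed closed and discrete, hence possibly infinite (as is $X_0$ when $X$ has infinite genus), and an admissible set is compact with finitely many components; the global absorption is therefore impossible in general. The absorption must be interleaved with the exhaustion, one $G$-orbit of $A$ per noncritical step --- which is what the paper's induction does, and what the second half of your own argument gestures at, but the two halves of your write-up are inconsistent about which of the two is happening. Second, at a $G$-orbit of fixed points crossed during the induction you propose to ``insert the explicit flat local model \eqref{eq:f0}'' and then apply Lemma \ref{lem:nonflat}. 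In Theorem \ref{th:mainbis} the local data near every point of $A\setminus S$ is \emph{prescribed} by the given immersion $F_0|_V$, and the conclusion demands that $F$ agree with $F_0$ to order $k(a)$ there; grafting in the model \eqref{eq:f0} instead of the germ of $F_0|_V$ produces the wrong $k(a)$-jet at those points, so the interpolation claim is lost. The model \eqref{eq:f0} and the $2$-plane hypothesis belong to the deduction of Theorem \ref{th:main} \emph{from} Theorem \ref{th:mainbis}, where no $F_0|_V$ is given and one must manufacture it; inside the proof of Theorem \ref{th:mainbis} one simply sets $F_i=F_0$ on the preimage $\Delta$ of a small disc around the newly crossed point of $\pi(A)$ and connects it to the current set by a $G$-orbit of arcs, exactly as you already do in your preparatory step. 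With these two corrections your argument coincides with the paper's.
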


By Remarks \ref{rem:disconnectedX} and \ref{rem:disconnectedX2}, 
Theorem \ref{th:mainbis} also holds if $X$ is not necessarily
connected and the stabiliser $G_{X'}\subset G$ of each connected component 
$X'$ of $X$ acts effectively on $X'$.

We have seen in Section \ref{sec:towards} that the hypotheses of Theorem \ref{th:main} 
imply the existence of a $G$-equivariant conformal minimal immersion $F_0:V\to\R^n$
from a $G$-invariant neighbourhood $V\subset X$ of the closed discrete 
set $X_0$ \eqref{eq:X0},
so Theorem \ref{th:mainbis} implies Theorem \ref{th:main}. To obtain
nondegenerate conformal minimal immersions $X\to\R^n$ (i.e., not lying
in any affine hyperplane), it suffices to suitably choose their values 
on finitely many $G$-orbits in $X$ and apply the interpolation statement 
on the set $A\subset X$ in Theorem \ref{th:mainbis}.

\begin{proof}[Proof of Theorem \ref{th:mainbis}] 
By Lemmas \ref{lem:main} and \ref{lem:nonflat} we may assume, after 
shrinking $V$ around the closed discrete set $A\setminus S$ if necessary, that $F_0$ is a 
nonflat $G$-equivariant conformal minimal immersion $F_0:U\cup V\to\R^n$, 
where $U\subset X$ is a $G$-invariant open neighbourhood of $S$ such that 
$A\cap S=A\cap U$ and $\overline U\cap \overline V=\varnothing$. Set 
\[
	\wt X=X/G, \quad \wt X_0=\pi(X_0), \quad \wt A=\pi(A),\quad 
	\wt U=\pi(U),\quad \wt V=\pi(V).
\]
Let $\theta$ be a $G$-invariant holomorphic 1-form \eqref{eq:theta0} with 
$\{\theta=0\}=X_0$, and let $Y$ be the manifold \eqref{eq:Y}.
By Proposition \ref{prop:properties} the map
$f_0=2\di F_0/\theta: U\cup V \to Y$ is holomorphic and $G$-equivariant. 
Let $\tilde f_0$ be the associated section of the map 
$\rho:Z=(X\times Y)/G \to X/G = \wt X$ \eqref{eq:Z} over $\wt U\cup \wt V$
(see Lemma \ref{lem:correspondence}). 
Since $S$ is a $G$-invariant admissible Runge set in $X$, the image
$\wt S=\pi(S)\subset\wt X$ is an admissible set which is Runge in $\wt X$ 
by Lemma \ref{lem:Runge}.
Hence there is a smooth strongly subharmonic Morse exhaustion function 
$\psi:\wt X\to\R_+$ and an increasing sequence $0<c_0<c_1<\cdots$ with 
$\lim_{i\to\infty}c_i=+\infty$ such that, setting $D_i=\{\psi \le c_i\}$, 
we have that $\wt S\subset \mathring D_0\subset D_0\subset \wt U$
and the following conditions hold for every $i\in\Z_+$. 
\begin{itemize} 
\item The number $c_i$ is a regular value of $\psi$.
\item $\{\psi=c_i\}\cap \wt A=\varnothing$.
\item The domain $\Gamma_{i+1}=\mathring D_{i+1}\setminus D_{i}$
contains at most one critical point of $\psi$ or at most one point of $\wt A$, but not both.
\end{itemize} 
For every $i\in\Z_+$ we set 
\begin{equation}\label{eq:Bi}
	B_i=\pi^{-1}(D_i)=\{\psi\circ \pi \le c_i\}\subset X.
\end{equation}
Note that the smoothly bounded compact sets $B_i$ are $G$-invariant and
they form a normal exhaustion of $X$.

To prove the theorem, we shall inductively construct a sequence 
$(F_i,f_i\theta)\in \GCMI_G^r(B_i,\R^n)$ of nonflat, $G$-equivariant   
generalized conformal minimal immersions satisfying 
the following two conditions for every $i\in \Z_+$:
\begin{enumerate}[\rm (a)]
\item $F_{i+1}$ approximates $F_{i}$ in the $\Cscr^r$ topology as closely as desired
on $B_{i}$, and
\item $F_{i+1}$ agrees with $F_0$ to the given order $k(a)$ 
in every point $a\in A\cap B_{i+1}$.
%(Note that $A\cap bB_{i+1}=\varnothing$.) 
\end{enumerate}
Assuming that the approximation conditions are appropriately chosen, 
the sequence $F_i$ converges to a $G$-equivariant conformal minimal immersion
$F=\lim_{i\to\infty}F_i:X\to \R^n$ satisfying Theorem \ref{th:mainbis}.
We refer to \cite[proof of Theorem 3.6.1]{AlarconForstnericLopez2021} for the details.

The initial step is provided by the restriction of  
$(F_0,f_0\theta)$ to $B_0$. Assuming inductively that we have found 
$(F_i,f_i\theta)\in \GCMI_G^r(B_i,\R^n)$ for some $i\in\Z_+$, we shall explain how to find 
the next map $(F_{i+1},f_{i+1}\theta)\in \GCMI_G^r(B_{i+1},\R^n)$ 
with the desired properties. Recall that 
$
	\Gamma_{i+1}=\mathring D_{i+1}\setminus D_i
$
for $i\in\Z_+$, so $D_{i+1}=D_i\cup\overline \Gamma_{i+1}$. We consider cases.

\smallskip\noindent{\em Case 1: $\Gamma_{i+1}$
does not contain any critical point of $\psi$ or a point of $\wt A$.}
In this case, $D_i$ is a strong deformation retract of $D_{i+1}$, and 
a generalized conformal minimal immersion 
$(F_{i+1},f_{i+1}\theta) \in \GCMI_G^r(B_{i+1},\R^n)$  
with the desired properties is furnished by Lemma \ref{lem:main}.

\smallskip
\noindent{\em Case 2: $\Gamma_{i+1}$ contains a critical point $\tilde x$ of $\psi$.}
We can attach to $D_i$ a smooth embedded arc 
$\wt E\subset \mathring D_{i+1}\setminus \mathring D_i$, intersecting $D_i$ only at 
its endpoints $\tilde p$ and $\tilde q$, such that $\wt S_i = D_i\cup \wt E$ 
is an admissible set  (see Definition \ref{def:admissible})
which is a strong deformation retract of $D_{i+1}$. 
(One can distinguish several topologically different subcases as in  
\cite[proof of Theorem 3.6.1]{AlarconForstnericLopez2021}, but this will not
affect our discussion.) Since $\overline \Gamma_{i+1}\cap \wt X_0=\varnothing$, 
the map $\pi$ is unbranched over $\overline \Gamma_{i+1}$ and hence 
the preimage $E=\pi^{-1}(\wt E)=\bigcup_{j=1}^m E_j \subset X$ 
is a disjoint union of $m=|G|$ smooth arcs. Let $p_j,q_j\in bB_i$ 
denote the endpoints of $E_j$ with $\pi(p_j)=\tilde p_j$
and $\pi(q_j)=\tilde q_j$ for $j=1,\ldots,m$. 
By \cite[Lemma 3.5.4]{AlarconForstnericLopez2021}
we can extend the given map $f_i:B_i\to Y$ from the induction step 
to a nonflat $G$-equivariant map $f'_i: S_i=B_i\cup E \to Y$ of class $\Ascr^{r-1}$
such that $f'_i(E)\subset \nullq_*$ and 
\begin{equation}\label{eq:period2}
	\Re \int_{E_j} f'_i\theta = F_i(q_j)-F_i(p_j)\ \ \text{holds for $j=1,\ldots,m$}, 
\end{equation}
where the arc $E_j$ is oriented from $p_j$ to $q_j$. 
(It suffices to ensure the condition \eqref{eq:period2}
on the arc $E_1$, as it then extends by $G$-equivariance to the remaining 
arcs $E_2,\ldots,E_m$. See \cite[proof of Theorem 3.6.1, p.\ 158]{AlarconForstnericLopez2021}
for the details of this argument.) By Remark \ref{rem:GCMI},
the map $f'_i$ determines an extension $F'_i$ of $F_i$ 
to the $G$-invariant admissible set $S_i=B_i\cup E=\pi^{-1}(\wt S_i)$
such that $(F'_i,f'_i\theta) \in \GCMI_G(S_i)$. 
By Lemma \ref{lem:main} we can approximate $(F'_i,f'_i\theta)$
in the $\Cscr^{r-1}(S_i)$ topology by $(F_{i+1},f_{i+1}\theta)\in \GCMI_G^r(B_{i+1},\R^n)$ 
having the desired properties.

\smallskip
\noindent{\em Case 3: $\Gamma_{i+1}$ contains a point $\tilde a \in \wt A$.} 
Let $\wt \Delta \subset \Gamma_{i+1}\cap \wt V$ 
be a small closed disc around $\tilde a$. The initial map $F_0$ is then a conformal 
$G$-equivariant minimal immersion on $\Delta=\pi^{-1}(\wt \Delta)\subset V$. 
Note that $B_i\cap \Delta=\varnothing$. We extend $(F_i,f_i\theta)$ to $B_i\cup \Delta$ 
by setting $F_i=F_0$ and $f_i=f_0$ on $\Delta$.
Choose a point $\tilde q\in b\wt \Delta$ and attach to $D_i$ a smooth embedded arc 
$\wt E \subset \mathring D_{i+1}\setminus (\mathring D_i \cup\mathring {\wt \Delta})$, 
having an endpoint $\tilde p\in bD_i$ and the other endpoint $\tilde q\in b\wt \Delta$, 
such that $\wt S_i:= D_i\cup \wt \Delta \cup \wt E$ is an admissible set in $\wt X$.
Note that $\wt S_i$ is a strong deformation retract of $D_{i+1}$. 
Set $S_i=\pi^{-1}(\wt S_i)\subset X$.
We now proceed as in Case 2, first extending $(F_i,f_i\theta)$ from $B_i\cup \Delta$ 
to $(F'_i,f'_i\theta)\in \GCMI^r_G(S_i)$ such that conditions \eqref{eq:period2} hold
and then applying Lemma \ref{lem:main} to obtain 
$(F_{i+1},f_{i+1}\theta)\in \GCMI_G^r(B_{i+1},\R^n)$ with the desired properties.
%(The fact the initial conformal minimal immersion $F_0$ may be flat near the points of $\pi^{-1}(\tilde a)$ is not a problem since we can perturb it slightly to a nonflat one.)

This completes the induction step, and hence of the proof of Theorem \ref{th:mainbis}. 
\end{proof}

%
%  CONCERNING THE FLUX
%
\begin{remark}[Controlling the flux]\label{rem:flux}
In the proof of Theorem \ref{th:mainbis}, one can also control
the flux of $G$-equivariant conformal minimal immersions, provided that the
flux homomorphism is $G$-equivariant. Explicitly, in the proof of Cases 2 and 3 above,
we can use \cite[Lemma 3.5.4]{AlarconForstnericLopez2021}
to extend the map $f_i:B_i\to Y$ 
to a nonflat $G$-equivariant map $f'_i: S_i=B_i\cup E \to Y$ of class $\Ascr^{r-1}$
such that $f'_i(E)\subset \nullq_*$, condition \eqref{eq:period2} holds, and in 
addition the imaginary parts $\Im \int_{E_j} f'_i\theta$ for $j=1,\ldots,m$ 
assume any given set of $G$-equivariant values in $\R^n$.
In particular, we can obtain the following analogue of Theorem \ref{th:main} 
for $G$-equivariant immersed holomorphic null curves $H=(H_1,\ldots,H_n): X \to \C^n$,
i.e., $G$-equivariant holomorphic immersions satisfying $\sum_{i=1}^n (dH_i)^2=0$.
(See \cite[Theorem 3.6.1]{AlarconForstnericLopez2021}
for the basic case when $G$ is the trivial group.) 

%
%  G-EQUIVARIANT NULL CURVES
%
\begin{theorem}[$G$-equivariant null holomorphic curves] \label{th:null}
Let $G$ be a finite group acting effectively on a connected open 
Riemann surface $X$ by holomorphic automorphisms, and acting on 
$\C^n$ $(n\ge 3)$ by complex orthogonal transformations in $O(n,\C)$. 
Assume that for every nontrivial stabiliser $G_x$ $(x\in X)$ there is a $G_x$-invariant 
null complex line $L_x\subset \nullq\subset\C^n$ on which a generator of 
$G_x$ acts by $z\mapsto \E^{\imath \phi}z$ with $\phi=2\pi/|G_x|$. 
Then there exists a nondegenerate $G$-equivariant holomorphic null immersion 
$H:X\to \C^n$.
\end{theorem}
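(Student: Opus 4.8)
The plan is to follow the proofs of Theorems \ref{th:main} and \ref{th:mainbis} essentially verbatim, replacing conformal minimal immersions $X\to\R^n$ by holomorphic null immersions $X\to\C^n$ and, correspondingly, the real parts $\Re(f\theta)$ of the Weierstrass differentials by the differentials $f\theta$ themselves; the non-equivariant case is \cite[Theorem 3.6.1]{AlarconForstnericLopez2021}. Recall that a holomorphic immersion $H=(H_1,\dots,H_n):X\to\C^n$ is a null curve precisely when $2\di H=f\theta$ for a holomorphic map $f=2\di H/\theta:X\to Y$ (with $\theta$ the $G$-invariant $1$-form \eqref{eq:theta0} and $Y$ as in \eqref{eq:Y}) such that $f\theta$ is nowhere vanishing and valued in $\nullq_*$ off $X_0$ and $\int_C f\theta=0$ for every $[C]\in H_1(X,\Z)$; then $H(x)=H(x_0)+\int_{x_0}^x f\theta$. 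Repeating the computations in the proofs of Proposition \ref{prop:properties} and Lemma \ref{lem:Gequivariance}, one checks that such an $H$ is $G$-equivariant if and only if $f$ is $G$-equivariant and $g\,H(x_0)=H(x_0)+\int_{x_0}^{gx_0}f\theta$ holds for all $g\in G$; as with \eqref{eq:gv}, this last condition will be inherited from the initial datum under period matching. (For consistency one may also note that, once $f$ is $G$-equivariant with vanishing homology periods, $g\mapsto\int_{x_0}^{gx_0}f\theta$ is a $\C^n$-valued group $1$-cocycle on $G$, hence a coboundary since $G$ is finite, so an admissible value $H(x_0)$ exists.)

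First I would build the local Weierstrass data near $X_0$, exactly as in Section \ref{sec:towards}. Fix $x_0\in X_0$, put $k=|G_{x_0}|>1$, choose a generator $g_0$ of $G_{x_0}$ and a local coordinate $z$ on a $G_{x_0}$-invariant disc $\Delta\ni x_0$ with $z(x_0)=0$ and $g_0 z=\E^{\imath\phi}z$, $\phi=2\pi/k$. By hypothesis there is a $G_{x_0}$-invariant null line $L=L_{x_0}\subset\nullq\subset\C^n$ on which $g_0$ acts as multiplication by $\E^{\imath\phi}$; choosing $0\ne y_0\in L$ and setting $f_0(z)=y_0/z^{k-1}$ as in \eqref{eq:f0}, the computation in Section \ref{sec:towards} shows that $f_0$ is $G_{x_0}$-equivariant, that $f_0\theta$ is holomorphic, nowhere vanishing, and valued in $\nullq_*$ on $\Delta\setminus\{x_0\}$, and that $f_0\theta$ integrates over the simply connected disc $\Delta$ to a flat $G_{x_0}$-equivariant holomorphic null immersion $H_0:\Delta\to L\subset\C^n$. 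Extending by $G$-equivariance over $G\cdot\Delta$ and repeating over the remaining orbits in $X_0$, with pairwise disjoint neighbourhoods, yields a $G$-equivariant holomorphic null immersion $H_0:V\to\C^n$ from a $G$-invariant neighbourhood $V$ of $X_0$, with $f_0=2\di H_0/\theta:V\to Y$ $G$-equivariant. A null-curve version of Lemma \ref{lem:nonflat}, whose proof is word for word the same (work with the section of $\rho:Z=(X\times Y)/G\to X/G$ associated to $f_0$ by Lemma \ref{lem:correspondence} and invoke the Oka principle for sections of branched holomorphic maps), then lets us replace $H_0$ by a nonflat $G$-equivariant holomorphic null immersion on a possibly smaller $G$-invariant neighbourhood of $X_0$.

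Next I would prove the null-curve analogue of the main Lemma \ref{lem:main}: with $X,G,\theta,X_0$ as there and $S$ a $G$-invariant admissible set with $X_0\subset\mathring S$ and $\pi(S)$ a strong deformation retract of $X/G$, every nonflat $G$-equivariant generalized null curve on $S$ (defined as in Definition \ref{def:GCMI} and Remark \ref{rem:GCMI}, with $dF$ and $\Re(f\theta)$ replaced by $dH$ and $f\theta$) can be approximated in the $\Cscr^r$ topology on $S$ by nonflat $G$-equivariant holomorphic null immersions $X\to\C^n$, with interpolation to a prescribed finite order on a prescribed finite set. Its proof is that of Lemma \ref{lem:main} with no essential change: the same period-dominating spray of \cite[Lemma 3.2.1]{AlarconForstnericLopez2021} is used, with period map $\Pcal_i(f)=\int_{C_{i,1}}f\theta\in\C^n$ as in \eqref{eq:periodmap} --- this map is already $\C^n$-valued and dominated by a $\C$-linear isomorphism, so matching the full complex periods is automatic --- and the Oka-theoretic steps (passage to sections of $\rho:Z\to X/G$, the open Stein neighbourhood \cite{Poletsky2013,Siu1976}, the fibre-spray factorisation of \cite{Forstneric2003FM}, Mergelyan approximation on the relevant curve $\wt C$, and the Oka principle for sections of branched maps \cite[Theorem 6.14.6]{Forstneric2017E}) are literally the same. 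Since the resulting $f$ has the same complex periods as $f_0$ on every arc of the family $\Cscr$, it has the same homology periods and the same integrals $\int_{x_0}^{gx_0}f\theta$, so $f\theta$ integrates to a $G$-equivariant holomorphic null immersion $H:X\to\C^n$ approximating $H_0$.

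Finally, the Runge-exhaustion induction of the proof of Theorem \ref{th:mainbis} goes through unchanged, now carrying a $G$-equivariant generalized null curve $(H_i,f_i\theta)$ over the $G$-invariant exhausting sets $B_i=\pi^{-1}(D_i)$ and handling the non-critical, critical, and interpolation cases (Cases 1--3) exactly as before; in Cases 2 and 3 one uses \cite[Lemma 3.5.4]{AlarconForstnericLopez2021} to extend across a newly attached arc $E_j$ so that the \emph{full} complex period is matched, $\int_{E_j}f_i'\theta=H_i(q_j)-H_i(p_j)$, which is precisely the strengthening recorded in Remark \ref{rem:flux} (and the prescribed values are $G$-equivariant, since it suffices to impose the condition on one arc of each $G$-orbit). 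Nondegeneracy of the limit null immersion is arranged, as for Theorem \ref{th:main}, by prescribing the values of $H$ on finitely many additional $G$-orbits and using the interpolation statement. The only point requiring care beyond the minimal-surface case --- and hence the place I would flag as the potential obstacle --- is that the period and equivariance conditions are now full $\C^n$-valued equations rather than conditions on real parts; but both the period-dominating sprays of \cite{AlarconForstnericLopez2021} and the arc-extension Lemma \cite[Lemma 3.5.4]{AlarconForstnericLopez2021} already supply $\C^n$-valued control (this is exactly what makes Remark \ref{rem:flux} work), so no genuinely new difficulty arises, and the substantive content remains the $G$-equivariant reduction of Section \ref{sec:towards} together with Lemmas \ref{lem:main} and \ref{lem:nonflat}.
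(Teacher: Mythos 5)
Your proposal is correct and follows essentially the same route as the paper: the paper derives Theorem \ref{th:null} from Remark \ref{rem:flux}, i.e., by observing that the period-dominating machinery of Lemma \ref{lem:main} already controls the full $\C^n$-valued periods \eqref{eq:periodmap}, and that in Cases 2 and 3 of the proof of Theorem \ref{th:mainbis} one can additionally prescribe the imaginary parts $\Im\int_{E_j}f_i'\theta$ via \cite[Lemma 3.5.4]{AlarconForstnericLopez2021}, so that the full complex period conditions are met and $f\theta$ integrates to a $G$-equivariant null holomorphic immersion. You correctly identify the only genuinely new point (full complex periods versus real parts) and resolve it exactly as the paper does.
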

\end{remark}

%
%	SYMMETRIC SURFACES WITH FTC
%
\section{Symmetric minimal surfaces with ends of finite total curvature}\label{sec:FTC}

Let $X$ be an open Riemann surface. An immersed minimal surface $F:X\to \R^n$ 
has nonpositive Gaussian curvature function $\Kcal :X\to (-\infty,0]$. 
The surface is said to have {\em finite total curvature} if 
\[
	\int_X \Kcal \, d\sigma>-\infty.
\]
Here, $d\sigma$ denotes the surface area in the induced Riemannian metric 
$F^*ds^2$ on $X$, where $ds^2$ is the Euclidean metric on $\R^n$. 
If in addition $F$ is {\em complete}, meaning that the pullback 
$F^*ds^2$ of the Euclidean metric is a complete metric on $X$,
then by Huber \cite{Huber1957CMH} the surface $X$ is biholomorphic to 
$\Sigma \setminus P$ where $\Sigma$ is a compact Riemann surface and $P$ 
is a nonempty finite subset of $\Sigma$. Furthermore, by
the Chern--Osserman theorem \cite{ChernOsserman1967JAM}, $\di F$ is a meromorphic 
$\nullq$-valued $1$-form on $X$ with an effective pole in  
every point of $P$, and the map $F:X\to \R^n$ is proper.
Many classical minimal surfaces are of this kind.
Complete minimal surfaces of finite total curvature have been of major interest
in the theory since the early works of Osserman in the 1960's; see 
\cite{ChernOsserman1967JAM,JorgeMeeks1983T,Osserman1986,BarbosaColares1986LNM,Yang1994MA,AlarconForstnericLopez2021} for background on this subject. 

To motivate our next result (see Theorem \ref{th:FTC}), we first 
recall the following Runge-type theorem 
for complete minimal surfaces with finite total curvature, 
due to Alarc\'on and L\'opez \cite[Theorem 1.2]{AlarconLopez2022APDE}.
A proof can also be found in \cite[Theorem 4.5.1]{AlarconForstnericLopez2021}, 
and a different proof has been given recently by Alarc\'on and L\'arusson 
\cite[Corollary 5.3]{AlarconLarusson2023X}.

%
%  Alarcon and Lopez 2022
%
\begin{theorem}%[Alarc\'on and L\'opez \cite{AlarconLopez2022APDE}]
\label{th:AL-FTC} 
Let $\Sigma$ be a compact Riemann surface, $P$ be a nonempty 
finite subset of $\Sigma$, $K$ be a smoothly bounded  
compact Runge domain in the open Riemann surface $X=\Sigma\setminus P$, 
and let $A$ and $\Lambda$ be disjoint finite subsets of $\mathring K$.
If $F_0:K\setminus A \to \R^n$ $(n\geq 3)$ is a complete conformal minimal immersion with finite total curvature, then for any $\epsilon>0$ and integer $k\ge 0$ there is a conformal minimal immersion $F: X\setminus A \to \R^n$ satisfying the following conditions.  
\begin{enumerate}[\rm (i)]
\item $F-F_0$ extends to a harmonic map on $\mathring K$ and satisfies 
$|F-F_0|<\epsilon$ on $K$. 
\item $F-F_0$ vanishes at least to order $k$ in every point of $A\cup \Lambda$.
\item $F$ is complete and has finite total curvature.
\end{enumerate}
\end{theorem}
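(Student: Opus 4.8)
The plan is to recast the statement via the Enneper--Weierstrass representation and then to exhaust $X=\Sigma\setminus P$ by Runge domains in which the punctures of $P$ are absorbed one at a time and turned into complete ends of finite total curvature. First I would fix a meromorphic $1$-form $\theta$ on $\Sigma$ that is holomorphic and nowhere zero on a neighbourhood of $K$ in $X$, and write $2\di F_0=f_0\theta$; by the Chern--Osserman description of complete finite--total--curvature ends, $f_0$ extends to a meromorphic map of $K$ into the projective closure $Y=\overline{\nullq}\setminus\{0\}$ of \eqref{eq:Y} with an effective pole at every point of $A$. Since conclusion (i) forces $F-F_0$ to be harmonic across $A$, any competitor satisfying (i) automatically has $\di F$ with the same principal part as $\di F_0$ at each point of $A$, hence an effective pole there; thus completeness and finite total curvature near $A$ come for free, and matching $F_0$ to order $k$ there is an extra condition on regular parts only. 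It therefore suffices to produce a meromorphic map $f\colon\Sigma\to Y$ with $f-f_0$ holomorphic and arbitrarily $\Cscr^r$-small on a neighbourhood of $K$, such that $f\theta$ is holomorphic and nowhere zero on $X\setminus A$, has an effective pole at every point of $P\cup A$, has $\Re\int_\gamma f\theta=0$ for every closed curve $\gamma$ in $X\setminus A$ (in particular its residues at the points of $P$ are real), and agrees with $f_0$ to order $k$ on $A\cup\Lambda$. Given such an $f$, the map $F(x)=F_0(x_0)+\int_{x_0}^x\Re(f\theta)$ (cf.\ \eqref{eq:EW2}) is a well--defined conformal minimal immersion on $X\setminus A$ whose differential extends meromorphically to the compact surface $\Sigma$ with effective poles exactly on $P\cup A$; hence $F$ has finite total curvature and is complete and proper, so (i)--(iii) hold.

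The construction of $f$ rests on the same two analytic inputs as Lemma \ref{lem:main}, in their non--equivariant form. The first is Mergelyan approximation with jet interpolation for maps of admissible sets into the punctured null quadric $\nullq_*$, an Oka (indeed algebraically elliptic) manifold, combined with a period--dominating spray of such maps; this lets one approximate on a Runge domain while prescribing the real periods over a homology basis and the finitely many jets on $A\cup\Lambda$, exactly as in the proof of Theorem \ref{th:mainbis}. The second is a \emph{puncture lemma}: if the conformal minimal immersion is already defined on a smoothly bounded Runge domain one of whose boundary circles bounds a small punctured disc about a point $p\in P$, then it extends across that punctured disc so that $\di F$ acquires an effective pole at $p$ with a real residue, still approximating on the old domain and respecting the interpolation conditions. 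I would prove the puncture lemma by gluing in an explicit complete finite--total--curvature model end near $p$ and correcting it with a period-- and residue--dominating spray valued in $\nullq_*$, the reality of the residue being obtained by running the spray with real parameters.

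With these tools in hand I would exhaust $X$ by smoothly bounded Runge domains $K=K_0\subset K_1\subset\cdots$ with $\bigcup_i K_i=X$ and $A\cup\Lambda\subset\mathring K_0$, chosen from a strongly subharmonic Morse exhaustion of $X$ so that the passage from $K_i$ to $K_{i+1}$ is always of exactly one of three types: a noncritical enlargement, the attachment of a single handle or arc through one critical point, or the absorption of one punctured--disc neighbourhood of a point of $P$. One then inductively builds conformal minimal immersions $F_i\colon K_i\setminus A\to\R^n$ with $\di F_i$ having effective poles (and real residues) at $A$ and at the points of $P$ already absorbed, with $F_{i+1}$ approximating $F_i$ as closely as wished on $K_i$ and agreeing with $F_0$ to order $k$ on $A\cup\Lambda$: the first two types of step are handled exactly as in Cases 1 and 2 of the proof of Theorem \ref{th:mainbis} (using \cite[Lemma 3.5.4]{AlarconForstnericLopez2021} to match the value differences in the critical case), and the third uses the puncture lemma. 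Once a puncture $p\in P$ has been absorbed at some stage $i_0$, all later $F_i$ are additionally required to agree with $F_{i_0}$ to ever higher order near $p$, so that no new singularity is created near $P$ and $\di F=\lim_i\di F_i$ still has an effective pole at $p$; with geometrically decaying approximation errors the limit $F=\lim_i F_i$ then exists on $X\setminus A$ and inherits all the required properties, as in \cite[proof of Theorem 3.6.1]{AlarconForstnericLopez2021}.

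The main obstacle is this third type of step together with the freezing requirement: one must turn a puncture of $P$ into a complete end of finite total curvature with \emph{vanishing real period} (equivalently, real residue) and \emph{effective pole}, without disturbing the period and interpolation conditions already arranged, and in such a way that the Weierstrass datum can be held essentially fixed near that puncture in all later steps so that $\di F$ genuinely extends meromorphically to $\Sigma$. This is exactly where the algebraic ellipticity of $\nullq_*$, an explicit model end with the correct residue, and a real--parameter period/residue--dominating spray are all needed simultaneously; once the puncture lemma is available the recursion itself is a routine adaptation of the scheme used for Theorem \ref{th:mainbis}, and the convergence together with the verification of (i)--(iii) follows as there and via the Chern--Osserman theorem.
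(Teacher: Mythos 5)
First, a point of orientation: the paper does not prove Theorem \ref{th:AL-FTC} at all --- it quotes it from Alarc\'on--L\'opez \cite{AlarconLopez2022APDE} and refers the reader to \cite[Theorem 4.5.1]{AlarconForstnericLopez2021} and \cite[Corollary 5.3]{AlarconLarusson2023X} for proofs. So your attempt has to be measured against those arguments and against the closely related scheme the paper does carry out for its equivariant analogue, Theorem \ref{th:FTC}. Your reduction is the right one and matches them: pass to the Weierstrass datum $f_0=2\di F_0/\theta$, note that condition (i) already pins down the principal parts of $\di F$ at the points of $A$, and reduce everything to producing a map $f$ into $Y$ that is meromorphic on all of $\Sigma$, sends $X\setminus A$ into $\nullq_*$, has effective poles with vanishing real periods at $P\cup A$, and matches jets on $A\cup\Lambda$; an effective pole of $f\theta$ at a puncture gives a complete end, and meromorphy of the datum on the compact surface $\Sigma$ gives finite total curvature.

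Where you diverge, and where the one real soft spot lies, is the infinite exhaustion $K_0\subset K_1\subset\cdots$ with a limit and a ``freezing'' device. Since $X=\Sigma\setminus P$ has finite topological type, the known proofs are finite: one extends over the compact bordered surface $\Sigma\setminus\bigcup_{p\in P}\mathring D_p$ in finitely many noncritical and critical steps and then performs your puncture lemma once per point of $P$, so no limit is taken and the final datum is manifestly meromorphic on $\Sigma$. In your scheme, once a puncture $p$ has been ``absorbed,'' every later correction $\di F_{i+1}-\di F_i$ must extend holomorphically across $p$ --- not merely agree with something to high order at $p$, which is a condition at a point outside the domain of the maps --- for the limit to stay meromorphic at $p$ with a pole of bounded order. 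This forces \emph{all} subsequent approximating data to be meromorphic at $P$, i.e.\ you need the algebraic (meromorphic-on-$\Sigma$) form of the Runge--Oka approximation with period domination at every stage of the induction, not only inside the puncture lemma; that is exactly what the algebraic ellipticity of $\nullq_*$ is used for in \cite{AlarconLopez2022APDE,AlarconLarusson2023X}, and with it the tail of your series is controlled on a full disc about $p$ by the maximum principle. Without making that requirement global, the exhaustion argument only produces data holomorphic on $X$, which need not extend meromorphically to $\Sigma$, and conclusion (iii) would fail. In short: correct reduction and correct toolkit, but the meromorphic-at-$P$ approximation must be carried through the entire construction (or, more simply, the construction should be made finite), rather than invoked only locally at the punctures.
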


The conditions in Theorem \ref{th:AL-FTC} imply that $\di F_0$ has an effective pole 
%(and hence $F_0$ has a complete end with finite total curvature) 
in every point of $A$, and $F$ has a complete end with finite total curvature 
in every point of $A\cup P$. Clearly, the result is equivalent to 
the special case when $P=\{p\}$ is a singleton. We emphasize that 
one can not prescribe the pole of $\di F$ at $p$ in an arbitrary way. 

One may wonder whether an analogue of Theorem \ref{th:AL-FTC} holds 
in the $G$-equivariant case. 
Explicitly, assume that $\Sigma$ is a compact Riemann surface, 
$G\subset \Aut(\Sigma)$ is a finite group of automorphisms, and
$P$ is a finite $G$-invariant subset of $\Sigma$. Then, $G$ also acts on the 
open Riemann surface $X=\Sigma\setminus P$,  
and the set $X_0=\{x\in X: G_x\ne \{1\}\}$ \eqref{eq:X0} is finite. 
Let $K\subset X$ be a smoothly bounded Runge compact set 
containing $X_0$ in the interior, and let $A$ and $\Lambda$ be disjoint 
finite $G$-invariant subsets of $\mathring K$. 
Assume that $G$ also acts on $\R^n$ by orthogonal maps. 
Under these assumptions, we pose the following problem.

\begin{problem}\label{prob:FTC} 
Given a $G$-equivariant complete conformal minimal immersion 
$F_0:K\setminus A \to \R^n$ with finite total curvature, 
is there a $G$-equivariant complete conformal minimal immersion 
$F: X\setminus A \to \R^n$ with finite total curvature satisfying Theorem \ref{th:AL-FTC}?
\end{problem}

We do not know the answer to this question, except 
for the results of Xu \cite{Xu1995} pertaining to planar domains
(i.e., minimal surfaces of genus zero, see Example \ref{ex:sphere}). 
However, the methods used in the proof of Theorem \ref{th:mainbis} give
the following result in this direction.

%
%  FTC approximation theorem 
%
\begin{theorem}%[Complete $G$-equivariant minimal surfaces with finite total curvature]
\label{th:FTC}
Assume that $X_0\subset X$ and $G$ are as in Theorem \ref{th:mainbis}, satisfying 
the condition on stabilisers of points $x\in X_0$.
Let $K$ be a smoothly bounded, $G$-invariant, Runge compact domain in $X$
such that $X_0\cap bK=\varnothing$, and let $A\subset \mathring K\setminus X_0$ 
and $\Lambda\subset \mathring K$ be disjoint $G$-invariant finite sets.
Given a complete $G$-equivariant conformal minimal immersion
$F_0:K\setminus A \to \R^n$ $(n\geq 3)$ of finite total curvature, 
there exists for any $\epsilon>0$ and integer $k \ge 0$ a 
nondegenerate $G$-equivariant conformal minimal immersion 
$F: X\setminus A \to \R^n$ satisfying the following conditions.  
\begin{enumerate}[\rm (a)]
\item $F-F_0$ extends to a harmonic map on $\mathring K$ and 
$|F-F_0|<\epsilon$ on $K$. 
\item $F-F_0$ vanishes at least to order $k$ in every point of $A \cup \Lambda$.
\item For any compact, smoothly bounded, $G$-invariant set $L$ with 
$K\subset L\subset X$, the $G$-equivariant conformal minimal immersion 
$F:L\setminus A\to\R^n$ is complete and has finite total curvature. 
\end{enumerate}
\end{theorem}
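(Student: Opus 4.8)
The plan is to run the proof of Theorem~\ref{th:mainbis}, incorporating the techniques used to establish the finite-total-curvature approximation theorem in the nonequivariant case; see \cite[Chapter 4]{AlarconForstnericLopez2021} and compare with Theorem~\ref{th:AL-FTC}. The guiding observation is that a conformal minimal immersion defined near a puncture $a$ has a \emph{complete} end of \emph{finite total curvature} at $a$ precisely when its $(1,0)$-differential $\di F$ has an effective pole at $a$: in a local holomorphic coordinate $z$ centred at $a$ one has $F^*ds^2 \asymp \|2\di F\|^2$, so if $2\di F$ has a pole of order $m\ge 1$ at $a$ the length of a radial path terminating at $a$ is comparable to $\int_0^\varepsilon r^{-m}\,dr=+\infty$, while the associated Gauss map extends holomorphically across $a$, which forces the total curvature of the end to be finite. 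Since $F_0$ has a complete end of finite total curvature at each point of the $G$-invariant set $A$, the $\nullq$-valued $1$-form $2\di F_0$ has an effective pole at each $a\in A$. Keeping $\theta$ as in \eqref{eq:theta0} (holomorphic, with $\{\theta=0\}=X_0$, and nonvanishing near $A$ since $A\cap X_0=\varnothing$), the map $f_0=2\di F_0/\theta$ is then a $G$-equivariant holomorphic map $\mathring K\to Y$ with $f_0^{-1}(Y_0)=X_0\cup A$ and $f_0(\mathring K\setminus A)\subset\nullq_*$, whose principal part at each $a\in A$ has vanishing real residue, so that $F_0$ is single-valued on $K\setminus A$. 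Throughout the construction the Weierstrass datum will be a $G$-equivariant holomorphic map $f:X\to Y$ with $f^{-1}(Y_0)=X_0\cup A$, with $f(X\setminus(X_0\cup A))\subset\nullq_*$, and with the same principal part as $f_0$ at every point of $A$, so that $f\theta$ is a meromorphic $\nullq$-valued $1$-form having poles exactly at $A$ and the prescribed real periods.

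With this setup, I would run the argument of Theorem~\ref{th:mainbis}, with $A$ now playing the double role of interpolation set and pole set. As in Section~\ref{sec:towards} one passes to the quotient $\wt X=X/G$, forms $Z=(X\times Y)/G$ with $\rho:Z\to\wt X$ as in \eqref{eq:Z}, and works with sections of $\rho$ through Lemma~\ref{lem:correspondence}. Since $A\subset\mathring K\setminus X_0$, its image $\wt A=\pi(A)$ lies in the interior of the Runge admissible set $\wt K=\pi(K)$, away from the branch locus $\wt X_0$; over a small $G$-invariant neighbourhood of $A$ the section determined by $f_0$ is kept fixed, so it meets the stratum $(X_1\times Y_0)/G$ of $Z'=Z\setminus\Omega$ (see \eqref{eq:Omega}) exactly over $\wt A$. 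The key step is the finite-total-curvature analogue of Lemma~\ref{lem:main}, allowing the admissible datum to have prescribed effective poles at $A$ (so that the associated Weierstrass $1$-form is meromorphic with poles at $A$): if $S$ is a $G$-invariant admissible set with $X_0\cup A\subset\mathring S$ and $\pi(S)$ a strong deformation retract of $\wt X$, then such a datum can be approximated in the $\Cscr^r(S)$ topology by $G$-equivariant conformal minimal immersions $F:X\setminus A\to\R^n$ that agree with $F_0$ to order $k$ at the points of $A\cup\Lambda$ and have the same principal part as $F_0$ at each $a\in A$. This would be proved exactly as Lemma~\ref{lem:main}: build a period-dominating spray over a Runge graph $\wt C\subset\wt X$ disjoint from $\wt A$, factor it through the Oka fibre-spray over a Stein neighbourhood of the section (which away from $\wt X_0$ is a locally trivial $\nullq_*$-bundle, and which is kept trivial over $\wt A$ as well), and apply the Oka principle for sections of branched holomorphic maps \cite[Theorem 2.1]{Forstneric2003FM}, \cite[Theorem 6.14.6]{Forstneric2017E}, now imposing interpolation along all of $\wt X_0\cup\wt A$ and the range constraint that $\wt X_1\setminus\wt A$ is mapped into $\Omega$. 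One then exhausts $X$ by the $G$-invariant sets $B_i=\pi^{-1}(D_i)$ (see \eqref{eq:Bi}) as in the proof of Theorem~\ref{th:mainbis}; since $A$ and $\Lambda$ lie in $\mathring K\subset\mathring B_0$, only the steps corresponding to Cases~1 and~2 of that proof occur, using \cite[Lemma 3.5.4]{AlarconForstnericLopez2021} and its $G$-equivariant extension to attach arcs with the correct $G$-equivariant real periods.

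The resulting $G$-equivariant conformal minimal immersion $F:X\setminus A\to\R^n$ is single-valued, as the real residues at $A$ are inherited from $f_0$, and it satisfies (a) because $2\di F-2\di F_0$ is holomorphic on $\mathring K$ and $\|F-F_0\|<\varepsilon$ on $K$, and (b) by the interpolation statement. Property~(c) is then automatic: given a compact, smoothly bounded, $G$-invariant set $L$ with $K\subset L\subset X$, the map $F$ is an immersion on all of $L\setminus A$ (a compact surface with boundary minus finitely many interior points); near each $a\in A$ the form $2\di F$ has the same effective pole as $2\di F_0$, so the end at $a$ is complete with finite total curvature, and since $L$ is compact this yields completeness and finite total curvature of $F$ on $L\setminus A$. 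Finally, nondegeneracy of $F$ on $X\setminus A$ is arranged, as in the proof of Theorem~\ref{th:mainbis}, by prescribing the values of $F$ on finitely many additional $G$-orbits (which may be absorbed into $\Lambda$) and invoking the interpolation together with a general position argument.

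The hard part will be the finite-total-curvature version of Lemma~\ref{lem:main}, namely checking that the Oka-theoretic machinery still works when the Weierstrass datum is required to carry prescribed effective poles at the $G$-invariant set $A\subset\mathring S$: the section of $\rho:Z\to\wt X$ now meets the bad set $Z'=Z\setminus\Omega$ not only over the branch locus $\wt X_0$ but also over $\wt A$, where it hits the $(X_1\times Y_0)/G$ stratum, and one must verify that the cited Oka principle for branched maps applies with interpolation imposed along all of $\wt X_0\cup\wt A$ and with the range constraint on $\wt X_1\setminus\wt A$. In the nonequivariant setting this is precisely what is carried out in \cite[Chapter 4]{AlarconForstnericLopez2021}, and the reduction to the quotient via Lemma~\ref{lem:correspondence} should transfer it to the present situation. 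A routine preliminary, as in Lemma~\ref{lem:nonflat}, is to perturb $F_0$ if necessary so that it is nonflat on each connected component of $K$ (flat ends are harmless, whereas nonflatness on the interior is needed for the spray construction).
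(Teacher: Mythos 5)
Your proposal is correct and follows essentially the same route as the paper: reinterpret the Weierstrass datum $f_0=2\di F_0/\theta$ as a $G$-equivariant holomorphic map into $Y$ sending $A\cup(K\cap X_0)$ to $Y_0$, pass to sections of $\rho:Z\to X/G$, and rerun the proof of Theorem~\ref{th:mainbis} with interpolation (and fixed principal parts) at $A$, deducing (c) from (a) and (b). The paper's proof is in fact terser than yours, simply asserting that the argument of Theorem~\ref{th:mainbis} applies without change and deferring the non-equivariant details to Alarc\'on--L\'opez; your explicit identification of where the section now meets $Z\setminus\Omega$ over $\wt A$ matches what that inspection requires.
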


Note that condition (c) follows from (a) and (b). 
The resulting minimal surface $F(X\setminus A)\subset \R^n$ is $G$-invariant
and has a complete end with finite total curvature in every point of $A$.  

\begin{proof}
Let $\theta$ be a $G$-invariant holomorphic 1-form \eqref{eq:theta0} on $X$ with 
$\{\theta=0\}=X_0$, and let $Y$ be the manifold \eqref{eq:Y}.
By Proposition \ref{prop:properties} the map
$f_0=2\di F_0/\theta: K \to Y$ is holomorphic and $G$-equivariant,
and it maps the finite set $A\cup (K\cap X_0)$ to $Y_0$ \eqref{eq:Y0}. 
(That is, the restricted map $f_0:K\setminus (A\cup X_0)\to \nullq_*$ is holomorphic and
has an effective pole in every point of $A\cup (K\cap X_0)$.)
Let $\tilde f_0$ be the associated holomorphic section of the map 
\[
	\rho:Z=(X\times Y)/G \to X/G 
\]
(see \eqref{eq:Z}) over $K$. 
An inspection of the proof of Theorem \ref{th:mainbis} shows that it
applies without any changes in this situation, and the resulting $G$-equivariant
conformal minimal immersion $F:X\setminus A\to\R^n$ can be chosen 
such that it satisfies the conclusion of the theorem. Further details
in the non-equivariant situation can be found in 
\cite[proof of Theorem 1.3]{AlarconLopez2022APDE}.
\end{proof}

%
%  Remark on FTC ends
%
\begin{remark} (A) Theorem \ref{th:FTC} shows that, in the setting of Problem \ref{prob:FTC},
we can find a $G$-equivariant conformal minimal immersion $F:X=\Sigma\setminus P\to\R^n$
with a complete end of finite total curvature in every point of $P\setminus P_0$,
where $P_0\subset P$ is an orbit of $G$.
 
(B) An inductive application of Theorem \ref{th:FTC} gives a  
Mittag--Leffler type theorem for $G$-equivariant minimal surfaces 
having a complete finite total curvature end in every point of a 
given closed discrete $G$-invariant subset $A$ of $X$
with $A\cap X_0=\varnothing$, provided that the prescription of the map in the ends 
(i.e., in the points of $A$) is $G$-equivariant. The analogous result without
group equivariance is due to 
Alarc\'on and L\'opez \cite[Theorem 1.3]{AlarconLopez2022APDE}.

We do not know whether Theorem \ref{th:FTC} still holds if the sets $A$ 
and $X_0$ are not disjoint, for in this case a problem appears in the 
proof of Lemma \ref{lem:main}.
\end{remark}

The following immediate corollary to Theorem \ref{th:FTC}
shows that any type of complete ends  
with finite total curvature can be realised by a $G$-equivariant conformal 
minimal immersion.

\begin{corollary}\label{cor:FTC}
Assume that $X_0\subset X$ and $G$ are as in Theorem \ref{th:main}. 
Let $x_1,\ldots, x_m\in X\setminus X_0$ be points in distinct $G$-orbits,
and for each $i=1,\ldots,m$ let $F_i:U_i\setminus \{x_i\}\to\R^n$
be a conformal minimal immersion of finite total curvature defined on a 
punctured neighbourhood of $x_i$. Set $A=\bigcup_{i=1}^m G x_i$. 
Then there exists a nondegenerate $G$-equivariant conformal minimal immersion 
$F:X\setminus A\to\R^n$ which agrees with $F_i$ to any given finite order 
in $x_i$ for $i=1,\ldots,m$.
\end{corollary}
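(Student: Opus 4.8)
The plan is to deduce this from Theorem~\ref{th:FTC}: I will assemble, out of the germs $F_1,\dots,F_m$ together with the flat local model near $X_0$, a complete $G$-equivariant conformal minimal immersion of finite total curvature on a $G$-invariant compact set, and then apply Theorem~\ref{th:FTC} to it. First I would extend the $F_i$ by equivariance near $A=\bigcup_{i=1}^m Gx_i$. Since each $x_i$ lies in $X\setminus X_0$, the stabiliser $G_{x_i}$ is trivial, so $Gx_i$ consists of $|G|$ distinct points; as the orbits $Gx_1,\dots,Gx_m$ are pairwise disjoint and none of the points $gx_i$ lies in the closed discrete set $X_0$, I may shrink the $U_i$ to small coordinate discs so that the sets $gU_i$ $(g\in G,\ 1\le i\le m)$ are pairwise disjoint and disjoint from $X_0$. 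Putting $V=\bigcup_{i,g}gU_i$ and $\wh F(gx)=g\,F_i(x)$ for $x\in U_i$, $g\in G$, yields a well-defined $G$-equivariant conformal minimal immersion $\wh F\colon V\setminus A\to\R^n$ that coincides with $F_i$ on $U_i\setminus\{x_i\}$ and has finite total curvature near $A$. By the Chern--Osserman theorem each $\di F_i$ extends meromorphically across $x_i$ with an effective pole, so, $\theta$ being holomorphic and nowhere zero on $V$ because $V\cap X_0=\varnothing$, the associated Weierstrass datum $\wh f=2\di\wh F/\theta\colon V\to Y$ is $G$-equivariant and holomorphic with $\wh f(V\setminus A)\subset\nullq_*$ and $\wh f(A)\subset Y_0$ (cf.\ Proposition~\ref{prop:properties} and \eqref{eq:Y}--\eqref{eq:Y0}).

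Next I would build the starting datum required by Theorem~\ref{th:FTC}. The hypothesis on stabilisers inherited from Theorem~\ref{th:main} provides, near each point of $X_0$, the flat $G$-equivariant conformal minimal immersion $\Phi$ associated with the invariant $2$-planes $\Lambda_x$, constructed precisely as in Section~\ref{sec:towards}; it is defined on a $G$-invariant neighbourhood $W$ of $X_0$, which I take disjoint from $V$. I would then choose a $G$-invariant, smoothly bounded, Runge compact set $K\subset W\cup V$ that is a disjoint union of small $G$-invariant coordinate discs, one about each point of $X_0\cup A$, with $X_0\cup A\subset\mathring K$ and $bK\cap(X_0\cup A)=\varnothing$; such $K$ is Runge in $X$ since deleting finitely many pairwise disjoint small discs from the connected noncompact surface $X$ leaves a connected noncompact complement, which therefore has no relatively compact component, so $K$ is $\Oscr(X)$-convex. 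Setting $F_0=\Phi$ on the discs about $X_0$ and $F_0=\wh F$ on the discs about $A$ gives a $G$-equivariant conformal minimal immersion $F_0\colon K\setminus A\to\R^n$ whose only singularities are the finitely many effective poles of its meromorphic Weierstrass datum at $A$; hence $F_0$ is complete and has finite total curvature on $K\setminus A$, and it equals $F_i$ near $x_i$. (Should one insist that $K$ be connected, one joins these discs by thin $G$-invariant tubes and extends $F_0$ over them by the one-dimensional argument from the proof of Lemma~\ref{lem:main} — extending a generalized conformal minimal immersion along arcs and then applying a Mergelyan approximation with matched periods — which changes nothing below.)

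Finally I would apply Theorem~\ref{th:FTC} with $\Lambda=\varnothing$, the prescribed integer $k$, and an arbitrary $\epsilon>0$: it produces a nondegenerate $G$-equivariant conformal minimal immersion $F\colon X\setminus A\to\R^n$ with $F-F_0$ vanishing to order $k$ at every point of $A$. Since $F_0$ agrees with $F_i$ on a neighbourhood of $x_i$, the map $F$ agrees with $F_i$ to order $k$ at $x_i$ for each $i$, which is the assertion of the corollary. The only step carrying real content is the construction of $F_0$ on $K\setminus A$, and this is routine once $K$ is taken to be a union of small discs; so I expect the main difficulty to be purely organizational — making every choice ($U_i$, $W$, $K$, the optional tubes) $G$-invariantly and keeping $K$ Runge throughout — together with the observation that completeness and finite total curvature of $F_0$ on the compact $K\setminus A$ are automatic from the effective poles of the $F_i$.
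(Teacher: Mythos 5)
Your proposal is correct and is essentially the paper's own (unwritten) argument: the corollary is stated as an immediate consequence of Theorem \ref{th:FTC}, and your assembly of the initial datum $F_0$ --- extending each $F_i$ by equivariance over the free orbit $Gx_i$, packaging the result on a $G$-invariant, smoothly bounded Runge union of discs $K$ with $A\subset\mathring K$, and noting that the effective poles of $\di F_i$ at $x_i$ (Chern--Osserman) make $F_0$ complete of finite total curvature on $K\setminus A$ --- is exactly the routine verification the paper omits. The one slip is that $X_0$ may be an \emph{infinite} discrete set when $X$ has infinite genus, so a $K$ containing a disc about every point of $X_0$ need not be compact; this is harmless, since Theorem \ref{th:FTC} only requires $X_0\cap bK=\varnothing$ rather than $X_0\subset K$, so you may take $K$ to consist of the discs about the points of $A$ alone and let the proof of Theorem \ref{th:FTC} (via Theorem \ref{th:mainbis}) supply the local flat models near $X_0$ from the stabiliser hypothesis.
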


%
%
% SECTION: INFINITE DISCRETE GROUPS OF SYMMETRIES
%
%
\section{Minimal surfaces with infinite discrete groups of symmetries}\label{sec:infinite}

Let $X$ be an open Riemann surface, and let $G$ be an infinite discrete group acting 
on $X$ by holomorphic automorphisms such that the action is 
{\em properly discontinuous}, meaning that every pair of points $p,q\in X$
admits open neighbourhoods $p\in U\subset X$, $q\in V\subset X$ such that
the set $\{g\in G:gU \cap V\ne \varnothing\}$ is finite. (Equivalently, 
for every compact subset $K\subset X$
the set $\{g\in G: gK \cap K\ne \varnothing\}$ is finite.) 
Then, $G$ is countable, every orbit $Gx$ $(x\in X)$ is an infinite closed 
discrete subset of $X$,
the set $X_0=\{x\in X: G_x\ne \{1\}\}$ \eqref{eq:X0} is closed and discrete in $X$, 
every nontrivial stabiliser $G_x$ $(x\in X_0)$ is a finite cyclic group, 
the quotient space $X/G$ has the structure of a Riemann surface, and  
the quotient projection $\pi:X\to X/G$ is holomorphic. 
(See Miranda \cite[p.\ 83]{Miranda1995} and tom Dieck 
\cite[Chapter 1, Sect.\ 3]{tomDieck1987TG} for these facts.) 

Assume that $G$ also acts on $\R^n$ by {\em rigid transformations},
i.e., compositions of orthogonal maps, translations, and dilations.
We have the following generalization of Theorem \ref{th:main}.

%
%  THEOREM: G IS INFINITE DISCRETE
%
\begin{theorem}\label{th:infinite}
Let $G$ be a countable discrete group acting properly discontinuously on an open  
Riemann surface $X$ by holomorphic automorphisms such 
that the Riemann surface $X/G$ is open, 
and acting on $\R^n$ $(n\ge 3)$ by rigid transformations. 
If for every point $x\in X$ with nontrivial stabiliser
$G_x$ there is a $G_x$-invariant affine $2$-plane $\Lambda\subset \R^n$ on which 
$G_x$ acts effectively by rotations, then there exists a nondegenerate 
$G$-equivariant conformal minimal immersion $X\to\R^n$.
\end{theorem}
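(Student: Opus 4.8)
\textbf{Proof plan for Theorem \ref{th:infinite}.} The plan is to show that the whole apparatus built for the finite case — the Weierstrass representation of Theorem \ref{th:GWR}, the main Lemma \ref{lem:main}, and the exhaustion argument of Theorem \ref{th:mainbis} — carries over essentially verbatim, once one records how an infinite discrete $G$ and the larger group of rigid transformations fit into the picture. First I would fix the quotient data. Since $G$ acts on $X$ properly discontinuously, $\wt X:=X/G$ is a Riemann surface, the projection $\pi:X\to\wt X$ is holomorphic and branched exactly over the closed discrete set $\wt X_0:=\pi(X_0)$, with $X_0$ as in \eqref{eq:X0}, and $\pi:X_1\to X_1/G$ is a covering, where $X_1=X\setminus X_0$ \eqref{eq:X1}; see Miranda \cite[p.\ 83]{Miranda1995}. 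The hypothesis that $\wt X$ be \emph{open} is precisely what keeps the complex analysis alive: $\wt X$ is then a Stein Riemann surface, hence it carries a $G$-invariant holomorphic $1$-form $\theta$ as in \eqref{eq:theta0} with $\{\theta=0\}=X_0$, a strongly subharmonic Morse exhaustion function, Runge approximation, and the Oka principle for sections of ramified holomorphic maps. I would then organise all data downstairs on $\wt X$; in particular any $G$-invariant Runge set appearing in the construction will be a preimage $\pi^{-1}(\wt S)$ of a set $\wt S$ that is Runge in $\wt X$ by construction, so that only the elementary (and $G$-independent) half of the argument in Lemma \ref{lem:Runge} — namely, $\wt S$ Runge in $\wt X$ implies $\pi^{-1}(\wt S)$ Runge in $X$ — is invoked.

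Next I would pass from $O(n,\R)$ to rigid transformations. Write $g\in G$ acting on $\R^n$ as $v\mapsto \lambda_g A_g v+b_g$ with $\lambda_g>0$, $A_g\in O(n,\R)$, $b_g\in\R^n$. Its linear part $\lambda_g A_g$ lies in $\C^*\cdot O(n,\C)$, which preserves the punctured null quadric $\nullq_*$ \eqref{eq:nullq} and the manifold $Y$ \eqref{eq:Y} (the scaling acting trivially on the hyperplane at infinity). This defines an action of $G$ on $Y$, and the diagonal action $g(x,y)=(gx,gy)$ on $X\times Y$ is again properly discontinuous since proper discontinuity on the first factor suffices. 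Hence $Z=(X\times Y)/G\to\wt X$ \eqref{eq:Z} is a reduced complex space with $\rho$ branched over $\wt X_0$, and $\rho:\Omega=(X_1\times\nullq_*)/G\to X_1/G$ \eqref{eq:Omega} is a holomorphic $G$-bundle with the Oka fibre $\nullq_*$; Lemma \ref{lem:correspondence} is purely formal and remains valid. I note that any finite-order $g$ has $\lambda_g=1$, so every stabiliser $G_x$ acts on $\R^n$ by orthogonal transformations and the local construction near $X_0$ is literally the one of Section \ref{sec:towards}: the hypothesis gives a $G_x$-invariant (affine) $2$-plane $\Lambda$ on which a generator of $G_x$ is a rotation by $2\pi/|G_x|$, hence a $G_x$-invariant null line $L\subset\nullq$ on which it acts by $\E^{\imath\phi}$, and one obtains a $G$-equivariant $F_0$ with $f_0=2\di F_0/\theta$ on a $G$-invariant neighbourhood $V$ of $X_0$ (disjoint neighbourhoods of the points of an orbit in $X_0$ exist by proper discontinuity). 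Differentiating the equivariance relation $F(gx)=\lambda_g A_g F(x)+b_g$ and taking $(1,0)$-parts shows, as in Proposition \ref{prop:properties}, that $f=2\di F/\theta$ is $G$-equivariant for this action; and since the \emph{real} linear map $\lambda_g A_g$ commutes with $\Re$, the computation \eqref{eq:equivarianceofintegral} yields the analogue of Lemma \ref{lem:Gequivariance}: the map $F$ in \eqref{eq:EW2} is $G$-equivariant if and only if $f$ is and $gF(x_0)=F(x_0)+\int_{x_0}^{gx_0}\Re(f\theta)$ for all $g\in G$, i.e.\ \eqref{eq:gv} holds with the affine action on the left. This is the version of Theorem \ref{th:GWR} for infinite discrete $G$ with $\wt X$ non-compact.

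Finally I would reproduce the proof of Theorem \ref{th:mainbis} over the base $\wt X$: build period-dominating sprays over finitely many arcs, extend them by $G$-equivariance, factor them through a fibre-dominating spray on a Stein neighbourhood of the section $\tilde f_0(\wt S)$, globalise by the Oka principle \cite[Theorem 6.14.6]{Forstneric2017E} over the Stein surface $\wt X$ with interpolation along $\wt X_0$ and range in $\Omega$ (this is Lemma \ref{lem:main}, whose hypotheses are already stated downstairs), and then run the Morse-exhaustion induction of Theorem \ref{th:mainbis} on $\wt X$, recording the period conditions \eqref{eq:gv} along curves in the strong deformation retract $C$ of $X$ assembled from preimages of arcs. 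Possible infinite generation of $H_1(X,\Z)$, infinitude of $X_0$, and infinitude of the $G$-orbits are all absorbed by this inductive scheme, exactly as in the finite case. Nondegeneracy is obtained as in the discussion following Theorem \ref{th:mainbis}, by prescribing affinely independent values at $n+1$ points of $X_1$ lying in distinct $G$-orbits and invoking the interpolation statement on the union of those (closed, discrete) orbits.

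I expect the only genuinely new bookkeeping to be threefold: checking that the $G$-action on $Y$ and on $Z$ is well defined in the presence of the dilation factors $\lambda_g$; carrying the affine part of the rigid transformations through the period condition \eqref{eq:gv} and the identity \eqref{eq:equivarianceofintegral}; and — most importantly — exploiting the hypothesis that $X/G$ be open, which is what makes it Stein and so keeps Oka theory, Runge approximation and Morse exhaustions available, and which (together with organising the construction downstairs) lets one bypass the $G$-averaging step in the proof of Lemma \ref{lem:Runge} that no longer converges for infinite $G$. Everything else is the proof of Theorem \ref{th:mainbis} read over $\wt X$.
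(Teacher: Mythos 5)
Your proposal is correct and follows essentially the same route as the paper's own (sketched) proof: work on the quotient $\wt X=X/G$ (open, hence Stein), let $G$ act on $Y$ through the linear parts $dg=\lambda_g A_g$ of the rigid transformations, observe that stabilisers are finite and hence dilation-free so the local model near $X_0$ is unchanged, adjust \eqref{eq:gv} and \eqref{eq:equivarianceofintegral} for the affine action, and rerun the spray/Oka/Morse-exhaustion scheme of Theorem \ref{th:mainbis} downstairs. Your extra observations (only the easy direction of Lemma \ref{lem:Runge} is needed since $G$-averaging fails for infinite $G$; nondegeneracy via interpolation on finitely many orbits) are consistent with, and slightly more explicit than, the paper's treatment.
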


\begin{proof}[Sketch of proof]
It suffices to inspect the proof of Theorem \ref{th:mainbis}, taking into 
account the information given above. We point out the relevant 
facts and the necessary modifications.

We obtain a $G$-invariant holomorphic $1$-form on $X$ as before by taking
$\theta=d(\tilde h\circ\pi)$ \eqref{eq:theta0}, 
where $\tilde h:X/G\to\C$ is a holomorphic immersion.
Note that the property \eqref{eq:theta} of $\theta$ still holds. 

By the hypothesis, every $g\in G$ acts on $\R^n$ by a map of the form
\begin{equation}\label{eq:gonRn} 	
	g\bx=rO\bx+b,\quad \bx\in\R^n
\end{equation}
for some $r>0$, $O\in O(n,\R)$, and $b\in \R^n$. 
Its differential $dg=rO$ also acts on $\C^n$, $\CP^n$, 
the null quadric $\nullq$, and the manifold $Y=\nullq_*\cup Y_0$ \eqref{eq:Y}. 
The formulas \eqref{eq:Fg0}--\eqref{eq:fg} 
must be adjusted by replacing $g$ acting on the derivative 
$\di F$ and on the map $f=2\di F/\theta:X\to Y$ by
its differential $dg=rO$, so the correct analogue of the equations 
\eqref{eq:Fg}--\eqref{eq:fg} is
\begin{equation}\label{eq:fdg}
	\di F_{gx}\circ dg_x = dg\circ \di F_x,\qquad f(gx) = dg\,\cdotp f(x)=rO\,\cdotp f(x).
\end{equation}
Everything said in the sequel assumes this notion of $G$-equivariance of the map $f$.
Remark \ref{rem:necessary} applies verbatim and shows 
that the condition on stabilisers in Theorem \ref{th:infinite} is necessary. 
(Note that for $g\in G_x$ the differential $dg$ of its action on $\R^n$ \eqref{eq:gonRn} 
cannot contain a dilation by a factor $0<r\ne 1$ since the group $G_x$ is finite.)
Proposition \ref{prop:properties} still holds, except that 
the second part of condition (c) (that $F(x_0)$ is orthogonal to $\Lambda$)
need not hold since $g_0$ can be conjugate to a rotation by a translation. 
Lemma \ref{lem:Gequivariance} and Theorem \ref{th:GWR} remain unchanged.
The action \eqref{eq:gxy} of the group $G$ on $X\times Y$ is now replaced by
\[
	g(x,y)=(gx,dg\,\cdotp y),\quad x\in X,\ y\in Y,\ g\in G.
\]
This action is properly discontinuous 
(since it is such on $X$), so the quotient $Z=(X\times Y)/G$ is a 
reduced complex space and the projection 
$\rho:Z=(X\times Y)/G \to X/G=\wt X$ \eqref{eq:Z} is a holomorphic map
which is ramified (only) over $\pi(X_0)=\wt X_0$, and 
$\rho:Z\setminus \rho^{-1}(\wt X_0) \to \wt X\setminus \wt X_0$ is a
holomorphic $G$-bundle with fibre $Y$ as before.
The correspondence between 
$G$-equivariant maps $f:X\to Y$ (in the sense of \eqref{eq:fdg})
and sections $\tilde f:X\to Z$ of $\rho$, given by Lemma \ref{lem:correspondence}, 
remains valid. Lemma \ref{lem:onV} holds without changes.

In Lemma \ref{lem:main}, the $G$-invariant admissible set $S$ is no
longer compact, but its projection $\wt S=\pi(S)\subset \wt X$ is compact,
which is all that matters in the proof. In fact, we see from 
\eqref{eq:period-i} that the period domination property of the spray $h_i$
on the arcs $C_{i,j}$ in $C_i=\pi^{-1}(\wt C_i)=\bigcup_{j} C_{i,j} \subset S$ 
must only be arranged on one the arcs, $C_{i,1}$, as it extends 
by $G$-equivariance to a spray $C_i\times B\to Y$. 
(Here, $j\in\N$ is the index corresponding to the elements of $G$.) 
Lemma \ref{lem:nonflat} is of local nature and holds without changes.

The proof of Theorem \ref{th:mainbis} (see Section \ref{sec:proof}) is carried out with 
respect to a normal exhaustion of the open Riemann surface $\wt X=X/G$ by an increasing
sequence of compact Runge subsets $D_i$. In light of 
the previous discussion, it is seen by inspection that all steps hold, 
so we obtain Theorem \ref{th:infinite}.
Note that locally uniform convergence of the resulting sequence of 
holomorphic sections $\tilde f_i:D_i\to Z$ implies uniform convergence  
of the corresponding sequence $F_i:B_i=\pi^{-1}(D_i)\to\R^n$ of $G$-equivariant 
conformal minimal immersions on compacts in $X$.
However, when the group $G$ is infinite, 
there are no nonempty compact $G$-invariant sets in $X$.
Hence, an exact analogue of Theorem \ref{th:mainbis} 
(with uniform approximation on a $G$-invariant set) 
is not possible if dilations are involved in the action of $G$ on $\R^n$.
\end{proof}

Theorem \ref{th:infinite} implies the following analogue of Corollary \ref{cor:free}.

\begin{corollary}\label{cor:infinitefree}
If $G$ is an infinite discrete group acting freely and properly discontinuously on an open 
Riemann surface $X$ by holomorphic automorphisms such that the Riemann
surface $X/G$ is open, then for every action of 
$G$ on $\R^n$ $(n\ge 3)$ by rigid transformations there exists a 
nondegenerate $G$-equivariant 
conformal minimal immersion $X\to \R^n$, which can be chosen to be the
real part of a $G$-equivariant null holomorphic immersion $X\to\C^n$.
\end{corollary}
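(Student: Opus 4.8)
The plan is to read both conclusions off Theorem~\ref{th:infinite} and its flux-controlled refinement. Since $G$ acts freely on $X$, every stabiliser $G_x$ is trivial, so the hypothesis on stabilisers in Theorem~\ref{th:infinite} holds vacuously and the set $X_0$ of \eqref{eq:X0} is empty; in particular the $G$-invariant holomorphic $1$-form $\theta$ from \eqref{eq:theta0} is nowhere vanishing, the quotient map $\rho\colon Z=(X\times Y)/G\to X/G$ is an unramified holomorphic $G$-bundle with fibre $Y$ over the whole open surface $X/G$, and every step in the proofs of Lemma~\ref{lem:main} and Theorem~\ref{th:mainbis} involving the branch locus becomes trivial. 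Thus Theorem~\ref{th:infinite} already yields a nondegenerate $G$-equivariant conformal minimal immersion $X\to\R^n$, and only the statement about null curves needs an argument.

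For the null holomorphic immersion I would run the proof of Theorem~\ref{th:infinite} once more, this time invoking the flux-prescribing version of Theorem~\ref{th:mainbis} from Remark~\ref{rem:flux}, whose adaptation to an infinite discrete group is carried out exactly as in the proof of Theorem~\ref{th:infinite}. The only requirement is that the prescribed flux homomorphism $H_1(X,\Z)\to\R^n$ be $G$-equivariant, and the zero homomorphism is trivially so since $dg\cdot 0=0$ for all $g\in G$; hence we may demand that the resulting $G$-equivariant conformal minimal immersion $F\colon X\to\R^n$ have vanishing flux. Writing $2\di F=f\theta$ with $f\colon X\to Y$ the associated $G$-equivariant holomorphic map (in the sense $f(gx)=dg\cdot f(x)$ of \eqref{eq:fdg}), which takes values in the punctured null quadric $\nullq_*$ because $X_0=\varnothing$, the $\nullq_*$-valued holomorphic $1$-form $f\theta$ has all periods zero: the real periods vanish because $F$ is globally defined on $X$ (see \eqref{eq:EW2}), and the imaginary periods vanish because the flux is zero. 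Fixing $x_0\in X$, the map $H(x):=\int_{x_0}^x f\theta$ is therefore a well-defined holomorphic map $X\to\C^n$ with $dH=f\theta$ nowhere vanishing and null, i.e.\ a holomorphic null immersion, and $d\Re H=\Re(f\theta)=dF$ gives $\Re H=F-F(x_0)$, so $F=\Re\bigl(H+F(x_0)\bigr)$.

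Finally I would check $G$-equivariance of $\tilde H:=H+F(x_0)$ for a suitable action of $G$ on $\C^n$ compatible with the given one on $\R^n$. Exactly as in \eqref{eq:equivarianceofintegral}, using $f(gx)=dg\cdot f(x)$ and the invariance \eqref{eq:theta} of $\theta$ one gets $\int_{g\gamma}f\theta=dg\cdot\int_{\gamma}f\theta$ for every path $\gamma$; splitting $\int_{x_0}^{gx}=\int_{x_0}^{gx_0}+\int_{gx_0}^{gx}$ yields $H(gx)=dg\cdot H(x)+c_g$ with $c_g:=\int_{x_0}^{gx_0}f\theta$, and additivity of the integral gives the cocycle relation $c_{gh}=c_g+dg\cdot c_h$, so $\bz\mapsto dg\cdot\bz+c_g$ is an action of $G$ on $\C^n$ by complex affine automorphisms for which $H$ is equivariant. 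Conjugating by the translation $\bz\mapsto\bz+F(x_0)$ gives the action $g\star\bz=dg\cdot\bz+\bigl(c_g+F(x_0)-dg\cdot F(x_0)\bigr)$; writing the given action on $\R^n$ as $g\bx=dg\cdot\bx+b_g$ as in \eqref{eq:gonRn}, the $G$-equivariance of $F$ at $x_0$ gives $\Re c_g=dg\cdot F(x_0)+b_g-F(x_0)$, hence $\Re\bigl(c_g+F(x_0)-dg\cdot F(x_0)\bigr)=b_g$, so $\Re\colon\C^n\to\R^n$ intertwines $\star$ with the given action. For $\star$ the map $\tilde H$ is then a nondegenerate $G$-equivariant holomorphic null immersion with $\Re\tilde H=F$, which proves the corollary. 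I expect the only point needing real care is confirming that the flux addendum of Remark~\ref{rem:flux} survives the passage to the infinite group made in the proof of Theorem~\ref{th:infinite}; this is routine, since zero flux imposes no constraint beyond $G$-equivariance and the constructions of Lemma~\ref{lem:main} and Theorem~\ref{th:mainbis} only simplify when $X_0=\varnothing$. Alternatively one may simply invoke the infinite-discrete-group analogue of Theorem~\ref{th:null}, obtained by the same inspection of the proof of Theorem~\ref{th:mainbis}.
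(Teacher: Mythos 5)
Your proposal is correct and follows the same route the paper intends: deduce the conformal minimal immersion directly from Theorem~\ref{th:infinite} (the stabiliser hypothesis being vacuous and $X_0=\varnothing$ for a free action), and obtain the null curve by prescribing zero flux via the infinite-group adaptation of Remark~\ref{rem:flux} and integrating $f\theta$. Your explicit verification that $H$ is equivariant for the complex affine action $\bz\mapsto dg\cdot\bz+c_g$ built from the cocycle $c_g=\int_{x_0}^{gx_0}f\theta$, with $\Re$ intertwining it with the given rigid action on $\R^n$, is a correct and welcome filling-in of a detail the paper leaves implicit.
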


The analogue of Corollary \ref{cor:special} also holds for an infinite discrete
group of rigid transformations on $\R^n$ whose induced action on an oriented 
embedded surface $X\subset \R^n$ is properly discontinuous and 
the orbit space $X/G$ is noncompact.
The statement of Theorem \ref{th:FTC} can also be adjusted to infinite
discrete groups acting on $X$ as in Theorem \ref{th:infinite}. 

As mentioned in the introduction, every group of automorphisms
of a Riemann surface of genus $\ge 2$ is finite. Hence, Theorem \ref{th:infinite} 
and its corollaries %for infinite discrete groups 
are relevant only when 
$X$ is an open domain in $\C$ or in a complex torus. Natural examples 
of such actions are by groups of translations on $X=\C$ and on $\R^n$, 
and there are several classical examples of translation-invariant 
minimal surfaces (for example, the helicoid, Scherk's surfaces, Schwarz's surfaces,
to name a few of the best known ones).
Also, every Riemann's minimal surface in $\R^3$ is translation invariant and 
parameterised by a domain in $\C$ on which the group $\Z$ acts properly 
discontinuously; see the survey by Meeks and Perez \cite{MeeksPerez2016}
and \cite[Subsec.\ 2.8.5]{AlarconForstnericLopez2021}. 
Another natural case is when $X$ is the unit disc $\D=\{|z|<1\}$ 
(or, equivalently, the upper halfplane $\H$), since 
its group of holomorphic automorphisms contains many infinite discrete subgroups
acting properly discontinuously (and even freely).

%
%
% 	ACKNOWLEDGEMENTS
%
%
\smallskip
\noindent {\bf Acknowledgements.} 
Research is supported by the European Union (ERC Advanced grant HPDR, 101053085) 
and by grants P1-0291, J1-3005, and N1-0237 from ARIS, Republic of Slovenia. 
I wish to thank Antonio Alarc\'on for helpful comments, 
Urban Jezernik for an idea used in the proof of Corollary \ref{cor:everyG}, 
Frank Kutzschebauch for helpful discussions concerning the Oka principle for 
$G$-equivariant holomorphic maps, Francisco J.\ L\'opez for communicating the 
question which is partially answered in the paper, and Tja\v sa  Vrhovnik for a careful
reading of the draft and pointing out several necessary corrections.
Last but not least, I thank the referee for careful reading 
and inspiring suggestions.

%%%%%%%%%%
%%%%%%%%%%
%%%%%%%%%%
%%%%%%%%%%   THE BIBLIOGRAPHY
%%%%%%%%%%
%%%%%%%%%%

%{\bibliographystyle{abbrv} \bibliography{references}} 
%\begin{comment}

%\end{comment}

\vspace*{5mm}
\noindent Franc Forstneri\v c

\noindent Faculty of Mathematics and Physics, University of Ljubljana, Jadranska 19, SI--1000 Ljubljana, Slovenia

\noindent 
Institute of Mathematics, Physics and Mechanics, Jadranska 19, SI--1000 Ljubljana, Slovenia

\noindent e-mail: {\tt franc.forstneric@fmf.uni-lj.si}

\end{document}